\newcommand\loc{{\rm {loc}}}
\newcommand{\3}{{|\!|\!|}}
\newcommand\W{{\rm W}}
\newcommand\Def{{\overset {\rm {def}}{\ =\ }}}
\newcommand\dist{{\rm {dist}}}
\newcommand\eff{{\rm {eff}}}
\newcommand\boldalpha{{\boldsymbol \kappa }}
\newcommand\boldbeta {{\boldsymbol \nu }}
\newcommand\boldtau{{\boldsymbol \tau}}
\newcommand\Hess{\operatorname{Hess}}
\newcommand\Ker{\operatorname{Ker}}
\newcommand\mes{\operatorname{mes}}
\newcommand\rank{\operatorname{rank}}
\newcommand\supp{\operatorname{supp}}
\newcommand\tr{\operatorname{tr}}
\newcommand\Tr{\operatorname{Tr}}
\newcommand\bR{{\mathbb R}}
\newcommand\cI{{\mathcal I}}
\newcommand\cJ{{\mathcal J}}
\newcommand\cU{{\mathcal U}}
\newtheorem{theorem}{Theorem}[section]
\newtheorem{corollary}[theorem]{Corollary}
\newtheorem{proposition}[theorem]{Proposition}
\theoremstyle{definition}
\newtheorem{definition}[theorem]{Definition}
\theoremstyle{remark}
\newtheorem{remark}[theorem]{Remark}
\numberwithin{equation}{section}
\newenvironment{claim}[1][{\rm(\theequation)}]{\refstepcounter{equation}%
\begin{trivlist}
\item[{\hskip\labelsep#1}]}{\end{trivlist}\addvspace{10pt}}
\newcounter{note}
\newenvironment{claim*}[1]{\medskip
\begin{trivlist}
\item[{\hskip\labelsep#1}]}{\medskip\end{trivlist}}
\newenvironment{phantomequation}[1][]{\refstepcounter{equation}}{}
\renewcommand\subsubsection{\paragraph{\thesubsubsection}\refstepcounter{subsubsection}}
\begin{document}


\title{%
Sharp Spectral Asymptotics for Dirac Energy}
\author{Victor Ivrii}
\date{\today}

\maketitle

{\abstract%
I derive sharp semiclassical asymptotics  of $\int |e_h(x,y,0)|^2\omega(x,y)\,dx\,dy$ where $e_h(x,y,\tau)$ is the Schwartz kernel of the spectral projector and $\omega(x,y)$ is singular as $x=y$. I also consider asymptotics of more general expressions.
\endabstract}

\section{Introduction}

In the series of papers \cite{IS, MQT1,MQT2,MQT3} devoted to the Sharp Asymptotics of the Ground State Energy of Heavy Atoms and Molecules it was needed to calculate \emph{Dirac Correction Term\/}\footnote{\label{foot-1} Representing Coulomb interaction of electrons with themselves which should not to be counted in the energy calculation and should be subtracted from the Thomas-Fermi expression.}  which in that approximation was equal to
\begin{equation}
I=\Def\iint |e(x,y,\tau)|^2|x-y|^{-1}\,dx\,dy
\label{0-1}
\end{equation}
where $e(x,y,\tau)$ is the Schwartz kernel of the spectral projector $E(\tau)$ of the (magnetic) Schr\"odinger operator 
\begin{equation}
A= {\frac 1 2}\Bigl(\sum_{j,k}P_jg^{jk}(x)P_k -V\Bigr),\qquad P_j=h D_j-\mu V_j
\label{0-2}
\end{equation}
$\tau\approx 0$ and $h\to +0$ (while either  $\mu\to +\infty$ or remains constant). Actually the corresponding part of these papers was originally more complicated but it was reduced to the  problem above.

Then $I\asymp h^{-d-1}$ where $d$ is the dimension ($d=3$ in the above papers) and it was needed to prove that $I=\cI+ O(h^{-d-1+\delta})$ with $\cI$ defined by the same formula but with $e(x,y,\tau)$ replaced by 
\begin{equation}
e^\W_y (x,y,\tau) \Def (2\pi h)^{-d}\int_{g(y,\xi)\le V(y)+2\tau} e^{ih^{-1}\langle x-y,\xi\rangle }\,d\xi
\label{0-3}
\end{equation}
and   with a small exponent $\delta>0$; for Magnetic Schr\"odinger it was needed to prove as $\mu \le h^{ -\delta}$ only. Expression (\ref{0-3}) is a \emph{Weyl expression\/} for $e(x,y,\tau)$ for operator with coefficients frozen at point $y$.

However I believe that the asymptotics of expression (\ref{0-1}) or more general one is interesting by itself and that there are a sharp asymptotics. Still my attempts to derive it were not very successful and in \cite{EE} I made some claims which I could not sustain at that time. So in this paper I just want to bring some degree of the order to this matter.

I am going to consider a matrix $h$-differential operator $A(x,hD)$ and find asymptotics of 
\begin{equation}
I\Def\iint  \omega  (x,y) e(x,y,\tau)\psi_2(x) e (y,x,\tau) \psi_1(y)\,dx\,dy
\label{0-4}
\end{equation}
with a matrix-valued function $\omega(x,y)$ such that
\begin{claim}\label{0-5}
$\omega (x,y)\Def \Omega
(x,y; x-y)$ where function $\Omega$ is smooth in 
$B(0,1)\times B(0,1)\times B(\bR^d\setminus 0)$ and homogeneous of degree $-\kappa$ ($0<\kappa<d$) with respect to its third argument\footnote{\label{foot-2} In other words it is Michlin-Calderon-Zygmund kernel.} 
\end{claim}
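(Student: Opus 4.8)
The plan is to verify Condition (0-5) directly for the kernels that actually occur and then to record why its precise form --- smoothness off the diagonal together with \emph{exact} homogeneity of degree $-\kappa$, $0<\kappa<d$, in $x-y$ --- is the one dictated by the later analysis. For the Dirac correction term (0-1) one takes $\Omega(x,y;z)=|z|^{-1}$, independent of the first two arguments; then $\kappa=1$, smoothness on $B(0,1)\times B(0,1)\times(\bR^d\setminus 0)$ and positive homogeneity of degree $-1$ in $z$ are immediate, and the admissibility range $0<\kappa<d$ holds whenever $d\ge2$, in particular for $d=3$. The same verification covers Riesz kernels $|x-y|^{-\kappa}$, products $a(x,y)\,|x-y|^{-\kappa}$ with $a$ smooth, anisotropic kernels $P(x-y)\,|x-y|^{-\kappa-\deg P}$ with $P$ a polynomial, and finite sums of these; and the class is manifestly stable under multiplication by the smooth cut-offs $\psi_1,\psi_2$ in (0-4) and under the spatial localizations performed below, so (0-5) is preserved by every reduction in the subsequent sections.

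Next I would check that the two numerical constraints built into (0-5) are exactly what the asymptotics require. Homogeneity of degree $-\kappa$ with $\kappa<d$ makes $\omega$ locally integrable across the diagonal, so that $I$ in (0-4) is well defined once $e(x,y,\tau)$ is bounded; and writing $\omega=\Omega(x,y;x-y)$, rather than admitting an arbitrary singular kernel, is what lets me Taylor-expand $\Omega$ in its two slow arguments about the diagonal and rescale the singular argument $z\mapsto hz$ --- this scaling is precisely what produces the power $h^{-d-\kappa}$ and the Weyl shape of $\cI$. The constraint $\kappa>0$, in turn, forces the region $\{|x-y|\ge\epsilon\}$ to contribute only lower order: there $\omega$ is smooth and bounded, so the standard Tauberian/propagation analysis of $\iint\omega\,e\,\psi_2\,e\,\psi_1$ off the diagonal gives $O(h^{-d})$, negligible against $h^{-d-\kappa}$.

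The genuine work --- and the point I expect to be delicate --- lies not in (0-5) as a standalone assertion but in confirming that the microlocal machinery of the later sections uses nothing about $\omega$ beyond it: that the homogeneous structure survives the passage to relative and center-of-mass coordinates, that the $z$-homogeneity is compatible with the partition into the diagonal zone $|x-y|\lesssim h$, the intermediate zone $h\lesssim|x-y|\lesssim 1$, and the far zone, and that in the borderline regime $\kappa\uparrow d$ the relevant $z$-integrals still converge with remainders uniform in $h$. It is to make these compatibility checks reduce to scaling that I state (0-5) in the homogeneous form above rather than as a general Michlin--Calder\'on--Zygmund condition; the latter, where only size and first-derivative bounds on $\Omega$ are available, I would treat separately in a remark.
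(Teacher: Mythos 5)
Statement (0-5) is not a theorem in this paper; it is the standing hypothesis that defines the admissible class of weights $\omega$, and the paper never proves it --- it assumes it and merely notes in the footnote that it describes a Michlin--Calder\'on--Zygmund-type kernel. There is therefore no proof to compare against, and you correctly do not attempt one: what you supply is a verification that the motivating examples (in particular $\Omega(x,y;z)=|z|^{-1}$ with $\kappa=1$, coming from the Dirac correction term (0-1)) satisfy the condition, together with a rationale for the numerical constraints. That rationale matches how (0-5) is actually used downstream: $\kappa<d$ gives local integrability of $\omega$ across the diagonal so that (0-4) is defined, $\kappa>0$ makes the diagonal contribution of magnitude $h^{-d-\kappa}$ dominate the smooth $O(h^{-d})$ off-diagonal background, and the exact $z$-homogeneity is precisely what the dyadic decompositions and rescalings in sections 1 and 2 exploit (e.g.\ the partition $\beta(z/\gamma)$ in the proof of Theorem 1.6 and the scaling $z\mapsto hz$ that produces $h^{-d-\kappa}$ in (2-6)).

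Two minor corrections to your commentary. Your ``anisotropic kernels'' $P(x-y)\,|x-y|^{-\kappa-\deg P}$ land in the class only if $P$ is a \emph{homogeneous} polynomial; otherwise $\Omega$ fails to be exactly homogeneous of degree $-\kappa$ in $z$. And your discussion of ``the borderline regime $\kappa\uparrow d$'' is outside the hypothesis ($\kappa<d$ is strict) and is not treated anywhere in the paper, so it is moot here. Neither affects the substance of your reading, which is a correct account of why (0-5) is stated in exactly this form rather than as a general singular-integral condition.
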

and  with smooth cut-off functions $\psi_1,\psi_2$.

The main part of asymptotics should have a magnitude of $h^{-d-\kappa}$ and I would like to get a remainder estimate $O(h^{1-d-\kappa})$.

One can also consider a more general expression
\begin{multline}
I_m\Def\iint  \omega  (x^1,\dots,x^m) e(x^1,x^2,\tau)\psi_2(x^1) e (x^2,x^3,\tau) \cdots e(x^m,x^1,\tau) \psi_{m+1}(x^0)\times\\
dx^1\cdots dx^m
\label{0-6}
\end{multline}
with $x^{m+1}=x^1$, $\psi_{m+1}\Def \psi_1$ etc and 
\begin{claim}\label{0-7}
$\omega (x^1,\dots,x^m)\Def \Omega  (x^1,\dots,x^m; \{x^j-x^{j+1}\}_{1\le j \le m })$ where function $\Omega$ is smooth in 
$B(0,1)^m \times B(\bR^d\setminus 0)^{m-1}$ and homogeneous of degree $-(m-1)\kappa$ with respect to  $ \{x^j-x^k\}_{1\le j <k\le m}$. Moreover, 
\begin{equation*}
|D^{\boldbeta}_{\mathbf z} D^{\boldalpha}_{\mathbf x} \Omega|\le C_{{\boldbeta},{\boldalpha}}|z^1|^{-\kappa -|\beta^1|}\cdots |z^m|^{-\kappa -|\beta^m|}\qquad
\text{as } \sum_k |z^k|^2=1,\sum_k z^k=0
\end{equation*}
where ${\mathbf x}=(x^1,\dots,x^m)$, ${\mathbf z}=(z^1,\dots,z^m)$, etc.
\end{claim}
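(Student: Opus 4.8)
\medskip
\noindent\textbf{Proof proposal.}
The statement (\ref{0-7}) records the structure we shall exploit for $\omega$, so to establish it one must exhibit this structure for the weights that actually occur in (\ref{0-6}) --- above all the iterated Coulomb weight $\prod_{j=1}^{m-1}|x^j-x^{j+1}|^{-\kappa}$ and its smooth $\mathbf x$-dependent variants. The plan is as follows. First I would observe that every such weight is, possibly after a partition of unity in $\mathbf x$ and a splitting into finitely many pieces, a finite sum of \emph{product terms}
\begin{equation*}
\Omega(\mathbf x;\mathbf z)=\prod_{j=1}^{m-1}\Omega_j\bigl(x^j,x^{j+1};z^j\bigr),\qquad z^j=x^j-x^{j+1},\quad z^m=-\textstyle\sum_{1\le j<m}z^j ,
\end{equation*}
where each $\Omega_j$ satisfies (\ref{0-5}) with exponent $\kappa$; for the pure Coulomb case one simply takes all $\Omega_j(x,y;z)=|z|^{-\kappa}$ and absorbs any smooth prefactor into a single factor. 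For a term of this shape the smoothness on $B(0,1)^m\times B(\bR^d\setminus0)^{m-1}$ and the homogeneity of degree $-(m-1)\kappa$ in $\{x^i-x^k\}_{i<k}$ are immediate, the latter because the $m-1$ factors carry degrees $-\kappa$ which add. It then suffices, by linearity, to prove the derivative estimate for a single product term, with $D_{\mathbf z}$ taken along $\{\sum_k z^k=0\}$ --- equivalently, with $\Omega$ extended to be independent of the dependent coordinate $z^m$.

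For the estimate I would first dispose of the two-point case, which merely unpacks (\ref{0-5}): if $\Omega(x,y;z)$ is smooth on $B(0,1)\times B(0,1)\times(\bR^d\setminus0)$ and homogeneous of degree $-\kappa$ in $z$, then $|D^{\alpha}_xD^{\alpha'}_yD^{\beta}_z\Omega|\le C_{\alpha\alpha'\beta}\,|z|^{-\kappa-|\beta|}$, because $D^{\alpha}_xD^{\alpha'}_yD^{\beta}_z\Omega$ is continuous, hence bounded, on the compact $\{|z|=1\}$ (uniformly for $\mathbf x$ in a compact subset) and is homogeneous of degree $-\kappa-|\beta|$ in $z$, so that rescaling $z\mapsto z/|z|$ yields the bound. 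In a product term the derivative $D_{z^k}^{\beta^k}$ (with $k\le m-1$) hits only the factor $\Omega_k$, while the Leibniz rule distributes $D^{\boldalpha}_{\mathbf x}$ among the factors, so
\begin{equation*}
D^{\boldbeta}_{\mathbf z}D^{\boldalpha}_{\mathbf x}\Omega=\sum\ \prod_{j=1}^{m-1}\bigl(D_{z^j}^{\beta^j}D^{\delta^j}_{\mathbf x}\Omega_j\bigr)
\end{equation*}
is a finite sum in which, by the two-point bound, each factor is $\le C|z^j|^{-\kappa-|\beta^j|}$; hence $|D^{\boldbeta}_{\mathbf z}D^{\boldalpha}_{\mathbf x}\Omega|\le C\prod_{j=1}^{m-1}|z^j|^{-\kappa-|\beta^j|}$. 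Finally, on $\{\sum_k|z^k|^2=1,\ \sum_k z^k=0\}$ one has $|z^m|\le1$, so $|z^m|^{-\kappa-|\beta^m|}\ge1$ and this factor may be inserted at no cost to give the asserted bound $\prod_{k=1}^m|z^k|^{-\kappa-|\beta^k|}$; if $\beta^m\ne0$ the left-hand side vanishes with our convention and there is nothing to prove.

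The genuinely delicate point --- and, I expect, the only one --- is the structural step of the first paragraph, for the derivative estimate is \emph{not} a formal consequence of smoothness and homogeneity alone: a function like $|z^1|^{-N}|z^2|^{N-(m-1)\kappa}$ is smooth off $\{z^1=0\}\cup\{z^2=0\}$ and homogeneous of degree $-(m-1)\kappa$, yet for $N>\kappa$ it violates the bound near $z^1=0$; so the per-edge blow-up rate $|z^j|^{-\kappa}$ must enter as genuine input. For the pure iterated Coulomb weight the product structure is manifest, but to cover a weight arising in the energy computation of \cite{IS,MQT1,MQT2,MQT3} that couples non-consecutive variables one must show that, modulo a smooth remainder, its singular support lies along the edges $z^j=x^j-x^{j+1}$ and that the blow-up there is no worse than $|z^j|^{-\kappa-|\beta^j|}$. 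A term singular along a sum $z^a+\dots+z^b$ of several consecutive differences would not be dominated by $\prod_k|z^k|^{-\kappa}$ --- those differences can be large individually while their sum is small --- so verifying that no such term is present is exactly where the origin of $\omega$ as the Schwartz kernel of a composition of spectral projectors, together with the fact that (\ref{0-7}) is needed only for such $\omega$, comes in. Near the higher collision strata, where several $z^j$ vanish at once and three or more of the $x^j$ coincide, nothing further is required: for a product term the bound is multiplicative and passes to the closure verbatim.
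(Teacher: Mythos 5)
The statement you have been asked about is not a theorem and the paper contains no proof of it --- nor could it, because (\ref{0-7}) is a \emph{hypothesis}: it is the definitional condition that $\omega$ is required to satisfy in order for the subsequent results (Theorem \ref{thm-1-6}, Propositions \ref{prop-1-7}--\ref{prop-1-8}, \ref{prop-2-2}--\ref{prop-2-3}, Theorem \ref{thm-2-20}) to apply. It plays the same role as (\ref{0-5}) does for $m=2$, or as the microhyperbolicity condition (\ref{1-15}): wherever it is invoked the text reads ``Let conditions (\ref{0-9}) and (\ref{0-7}) be fulfilled.'' Your proposal treats it instead as an assertion to be established about the specific kernels arising in \cite{IS,MQT1,MQT2,MQT3}, which is a different task --- namely, \emph{verifying} the hypothesis for an example rather than proving the statement.

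Within that misdirected framing, your internal reasoning is largely sound and even sharpens the picture: you are right that the derivative bound is genuine extra structure and does not follow from smoothness off the diagonal plus homogeneity (your counterexample $|z^1|^{-N}|z^2|^{N-(m-1)\kappa}$ is apt), and your two-point argument that compactness on $\{|z|=1\}$ plus homogeneity of degree $-\kappa-|\beta|$ yields the scalar bound, then extended multiplicatively via Leibniz to product kernels $\prod_j\Omega_j(x^j,x^{j+1};z^j)$, is correct. That observation is precisely \emph{why} the ``Moreover'' clause of (\ref{0-7}) has to be stated explicitly as part of the hypothesis rather than derived. But the paper does not undertake (and indeed explicitly defers, remarking ``I will leave it for another paper'') the step of exhibiting weights that satisfy (\ref{0-7}); so there is no paper proof to compare against, and a ``proof'' of a stated assumption is a category error. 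The correct response to this prompt would simply have been to note that (\ref{0-7}) is a condition imposed on $\omega$, not a result.
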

However I will leave it for another paper since not of all my arguments I was able to implement in this case.

The main part of asymptotics should have a magnitude of $h^{-d-(m-1)\kappa}$ (see Theorem \ref{thm-1-6})  and I would like to get a remainder estimate $O(h^{1-d-(m-1)\kappa})$.

I am also leaving for another paper  the similar but much more delicate and difficult analysis for a 2-dimensional Magnetic Schr\"odinger operator(\ref{0-2}) with the trajectories having many loops.

\begin{remark}\label{rem-0-1}
(i) To avoid the necessity to cut-off with respect to $hD$ one needs to assume that its symbol satisfies
\begin{equation}
|a (x,\xi)|^{-1}\le C|\xi|^{-m}\qquad \text{as\ } |\xi|\ge C_0
\label{0-8}
\end{equation}
as $a\in \Psi^m$ (one can weaken this condition but I leave it to the reader);

\medskip
\noindent
(ii) One needs to assume that $a$ is semibounded from below which under (\ref{0-8}) is equivalent to
\begin{equation}
\langle a(x,\xi)v,v\rangle \ge c^{-1}|v|^2  \qquad \text{as\ } |\xi|\ge C_0;
\label{0-9}
\end{equation}
otherwise instead of $E(\tau)$ one should consider $E(\tau_1,\tau_2)\Def E(\tau_2)-E(\tau_1)$; I leave it to the reader as well.
\end{remark}

This paper consist of two sections: in   section \ref{sect-1} I derive asymptotics with the sharp remainder estimate but with the implicit Tauberian approximation for $e(x,y,0)$.  In section \ref{sect-2} is I replace it by   expression (\ref{0-3}) without deteriorating remainder estimate for scalar operators under mild non-degeneracy condition (theorem \ref{thm-2-19}) and for certain matrix operators  (theorem \ref{thm-2-20}(i)) and with some not sharp remainder estimates for other matrix operators (theorem \ref{thm-2-20}(ii)). I just mention that for larger $\kappa$ we need less restrictive conditions to operator.

\section{Estimates}\label{sect-1}

\subsection{Special case}
\label{sect-1-1}

Let us assume first that $\omega=1$ but relax conditions to $\psi_1,\dots,\psi_m$, assuming only that $\psi_1,\dots,\psi_m\in L^\infty$. This is definitely not the case I am interested in but one needs to make few clarifications first. Then 
\begin{equation}
I_m\Def\Tr E(\tau) \psi_2 E(\tau)\psi_3 E_(\tau)\cdots E(\tau)\psi_{m+1}
\label{1-1}
\end{equation}
containing $m$ factors $E(\tau)$.

Under condition (\ref{0-9}) it is known (see f.e. \cite{Ivr1}) that if $L^\infty$ norms and the diameters of supports $\psi$, $\psi_1$ are bounded, then 
\begin{equation}
\3\psi E(\tau) \psi_1\3_1 \le Ch^{-d}\qquad\text{as } |\tau|\le c
\label{1-2}
\end{equation}
where $\3.\3_\infty$ and $\3.\3_1$ denote  operator and trace norms respectively. Then   since an operator norm of $E(\tau)$ does not exceed $1$ I conclude that $|I_m|\le ch^{-d}$. So
\begin{claim}\label{1-3}
If $\psi_j\in L^\infty$ and $I_m$ is given by (\ref{1-1}) then $|I_m|\le Ch^{-d}$.
\end{claim}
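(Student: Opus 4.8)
The statement to prove is (1.3): if $\psi_j \in L^\infty$ with bounded norms and bounded support diameters, and $I_m = \Tr E(\tau)\psi_2 E(\tau)\psi_3\cdots E(\tau)\psi_{m+1}$ contains $m$ factors $E(\tau)$, then $|I_m|\le Ch^{-d}$ for $|\tau|\le c$. The plan is to use the trace norm estimate (1.2) together with the fact that $\|E(\tau)\|_\infty \le 1$ (spectral projector) and the standard submultiplicativity/Hölder inequality for Schatten norms, namely $\3 AB\3_1 \le \3 A\3_1 \3 B\3_\infty$ and $\3 AB\3_1\le \3 A\3_\infty\3 B\3_1$.

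\medskip
First I would localize: since $\psi_2$ has support of bounded diameter, I can introduce a cut-off $\phi\in C_0^\infty$ equal to $1$ on a neighbourhood of $\supp\psi_2$, with bounded support. Write $E(\tau)\psi_2 = E(\tau)\phi\cdot\psi_2$ using $\phi\psi_2=\psi_2$, so that
\begin{equation*}
I_m = \Tr\bigl( E(\tau)\phi\bigr)\bigl(\psi_2 E(\tau)\psi_3\bigr)\bigl(E(\tau)\psi_4\bigr)\cdots\bigl(E(\tau)\psi_{m+1}\bigr).
\end{equation*}
By the cyclicity of the trace and estimate (1.2) applied to the factor $\psi_2 E(\tau)\psi_3$ (which is $\psi_2 E(\tau)\psi_3$ with $\psi_2,\psi_3$ having bounded $L^\infty$ norm and support — note we may also absorb a bounded cut-off on the other side), one factor contributes $\3\psi_2 E(\tau)\psi_3\3_1 \le Ch^{-d}$. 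Each remaining factor $E(\tau)\psi_j$ has operator norm bounded by $\|E(\tau)\|_\infty\|\psi_j\|_\infty \le \|\psi_j\|_{L^\infty}$, a constant, and $E(\tau)\phi$ similarly has operator norm $\le\|\phi\|_\infty$. Then by the Hölder inequality for the trace,
\begin{equation*}
|I_m| \le \3\psi_2 E(\tau)\psi_3\3_1 \cdot \prod_{\text{other factors}}\3\,\cdot\,\3_\infty \le Ch^{-d}.
\end{equation*}

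\medskip
The only subtlety — and this is the step I would be most careful about — is to make sure (1.2) is applied in a form with cut-offs on \emph{both} sides of $E(\tau)$, since as stated in (1.2) the bound is $\3\psi E(\tau)\psi_1\3_1\le Ch^{-d}$. When $m\ge 2$ there is always at least one factor of the form $\psi_j E(\tau)\psi_{j+1}$ (reading cyclically, $E(\tau)$ is flanked by $\psi_j$ on the left via the previous factor and $\psi_{j+1}$ on the right), so this is automatic; one just reorganizes the product, via cyclicity of the trace, so that the $\3\cdot\3_1$-estimate lands on such a doubly-cut-off block. For $m=1$, $I_1 = \Tr E(\tau)\psi_2 = \Tr \psi_2^{1/2}E(\tau)\psi_2^{1/2}$ (or insert a bounded $\phi$ with $\phi\psi_2 = \psi_2$) to get the two-sided form; so (1.2) still applies. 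In all cases the remaining factors only cost a constant via $\|E(\tau)\|_\infty\le 1$, giving $|I_m|\le Ch^{-d}$. There is no serious analytic obstacle here — the content is entirely in the already-cited estimate (1.2) — so the "hard part" is merely the bookkeeping of cut-offs and the choice of which block carries the trace-norm bound.
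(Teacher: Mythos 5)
Your proof is correct and takes essentially the same route as the paper's: apply the trace-norm bound (1.2) to one doubly cut-off block $\psi_j E(\tau)\psi_{j+1}$, estimate all the remaining factors in operator norm by $\|E(\tau)\|_\infty\le 1$ and $\|\psi_k\|_{L^\infty}$, and combine via H\"older for Schatten norms and cyclicity of the trace. The paper states this in a single sentence; your version merely spells out the bookkeeping of cut-offs and the cyclic reorganization, which is fine.
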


Further, let us assume that
\begin{claim}\label{1-4}
$a(x,\xi)$ is microhyperbolic on energy level $0$. 
\end{claim}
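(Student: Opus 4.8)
Strictly speaking (\ref{1-4}) is a \emph{condition} imposed on the principal symbol $a$, not an assertion with a self-contained proof; what ``proving'' it amounts to is fixing the precise meaning of ``microhyperbolic on energy level $0$'' and then checking that it holds for the operators one actually has in hand. Recall the definition: with $\Sigma_0\Def\{(x,\xi):\det a(x,\xi)=0\}$ (in the scalar case $\{a(x,\xi)=0\}$), the symbol $a$ is microhyperbolic on $\Sigma_0$ if at every point of $\Sigma_0$ there is a real vector $\ell=\ell(x,\xi)\in\bR^{2d}$, depending smoothly on $(x,\xi)$ nearby, such that, writing $(\ell a)(x,\xi)\Def\sum_j\ell_j\partial_{x_j}a+\sum_j\ell^{d+j}\partial_{\xi_j}a$, one has $\langle (\ell a)(x,\xi)v,v\rangle\ge\varepsilon|v|^2$ for all $v$ with $|a(x,\xi)v|\le\varepsilon|v|$, uniformly on a neighbourhood of that point; in the scalar case this reduces to $(\ell a)(x,\xi)>0$ wherever $a(x,\xi)=0$. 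By a partition of unity this is equivalent to asking for one Lipschitz field $\ell$ with this property on a full neighbourhood of the part of $\Sigma_0$ lying over $\bigcap_j\supp\psi_j$. So the plan is simply to exhibit such an $\ell$ for the operator (\ref{0-2}) and its matrix analogues.

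For (\ref{0-2}) the verification is explicit. Put $\pi\Def\xi-\mu V(x)$ (the kinetic momentum) and $g(x,\pi)=\sum_{j,k}g^{jk}(x)\pi_j\pi_k$, so that $a=\tfrac12\bigl(g(x,\pi)-V\bigr)$; a direct computation gives $\partial_{\xi_l}a=\sum_k g^{lk}\pi_k$, whence the vertical field $\ell=(0,\pi)$ yields $(\ell a)=\sum_l\pi_l\partial_{\xi_l}a=g(x,\pi)$, which on $\Sigma_0=\{g(x,\pi)=V\}$ equals $V(x)$. Thus microhyperbolicity holds at every point of $\Sigma_0$ with $V>0$, i.e.\ over the classically allowed region. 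At a point of $\Sigma_0$ with $V\le0$, positive-definiteness of $(g^{jk})$ forces $\pi=0$ and $V=0$, and there $\partial_{x_i}a=-\tfrac12\partial_{x_i}V$ (the $\pi$-dependent terms vanish at $\pi=0$), so the horizontal field $\ell=(-\nabla V,0)$ gives $(\ell a)=\tfrac12|\nabla V|^2$, positive provided $\nabla V\ne0$. Hence (\ref{1-4}) holds automatically on $\{V>0\}$ and, on its boundary, under the mild non-degeneracy hypothesis that $0$ is a noncritical value of $V$ on $\bigcap_j\supp\psi_j$ — which one adds to the standing assumptions; ellipticity at infinity, (\ref{0-8})--(\ref{0-9}), makes the relevant part of $\Sigma_0$ compact so that finitely many such $\ell$'s glue up. For a genuine matrix operator (Dirac-type) the same recipe applied to each simple eigenvalue branch of $a$ handles the points away from eigenvalue crossings; at a crossing one must verify the full matrix inequality above, and this is what makes the matrix case more delicate (cf.\ theorem \ref{thm-2-20}).

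Finally, a word on why (\ref{1-4}) is the hypothesis one wants and where the genuine difficulty lies. The point of singling it out is the propagation statement it unlocks: microhyperbolicity with field $\ell$ is precisely what lets one run the standard energy estimate for $hD_t+A$ against a symbol $\chi$ transported along $\ell$, giving finite and \emph{nonzero} speed of propagation of singularities for $e^{-ih^{-1}tA}$. Inserting the cut-offs $\psi_j$, the contribution of times $|t|\ge h^{1-\delta}$ to the Tauberian expression for $I_m$ is then $O(h^\infty)$ relative to the crude bound (\ref{1-3}), and this is the input that upgrades the remainder from order $h^{-d-(m-1)\kappa}$ to $O(h^{1-d-(m-1)\kappa})$. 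The real obstacle is not establishing (\ref{1-4}) but making this propagation estimate quantitative and uniform \emph{after} the singular Calder\'on--Zygmund weight $\Omega$ of (\ref{0-5})/(\ref{0-7}) is folded in: $\Omega$ couples $x$ and $y$ through $x-y$, so one must localize at the Planck scale $|x-y|\sim h^{1-\delta}$ without losing the gain $h^{-(m-1)\kappa}$, and it is this — not the verification of microhyperbolicity — that forces the further non-degeneracy conditions appearing in theorems \ref{thm-2-19}--\ref{thm-2-20}.
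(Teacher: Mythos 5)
Your opening diagnosis is exactly right and this is the main point: (\ref{1-4}) is introduced in the paper with ``let us assume that'' and is never proved---it is a standing hypothesis on the symbol, not a theorem, so there is no ``paper's proof'' to compare against. Your unpacking of the definition and the explicit check for the magnetic Schr\"odinger operator (\ref{0-2})---the vertical field $\ell=(0,\pi)$ giving $(\ell a)=g(x,\pi)=V$ on $\Sigma_0$, supplemented at $\pi=0$, $V=0$ by $\ell=(-\nabla V,0)$ giving $\tfrac12|\nabla V|^2$---is correct and matches what the paper records later in (\ref{2-30}), (\ref{2-35}) and Remark~\ref{rem-2-10}(i), where $(\ref{2-35})$ is identified as exactly condition (\ref{1-4}) and $V\ge\epsilon_0$ as the stronger $\xi$-directional version (\ref{1-15}).
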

Then as (\ref{1-4}) is fulfilled on supports of $\psi$, $\psi_1$  it is known (see f.e. \cite{Ivr1}) that 
\begin{equation}
\3 \psi \bigl(E(\tau) -E(\tau')\bigr)\psi_1 \3 \le C(|\tau-\tau'|+hT^{-1})h^{-d}
\qquad\text{as }|\tau|\le \epsilon_1, \ |\tau'|\le c.
\label{1-5}
\end{equation}
Here and for a while $T\asymp1$ but I want to keep a track of it.

Since this property holds under wider assumptions than microhyperbolicity, I will assume so far only that (\ref{1-5}) holds.

Then 
\begin{equation}
|\Tr'  \Bigl(\bigl(E(\tau)-E(\tau')\bigr)\psi_2 E(\tau_2)\psi_3 E(\tau_3)\cdots E(\tau_m)\psi_{m+1} \Bigr)|
\label{1-6}
\end{equation}
also does not exceed the right hand expression of (\ref{1-5}) as $|\tau|\le\epsilon_1$ and therefore due to the standard Tauberian arguments (second part, see f.e.\cite{Ivr1}) the following inequality holds:
\begin{multline}
|\Tr'  \biggl(\Bigl(E(0)-h^{-1}\int _{-\infty}^0 F_{t\to h^{-1}\tau} \bigl({\bar\chi}_T(t) U(t)\bigr)\,d\tau \Bigr) \psi_2 E(\tau_2)\psi_3 E(\tau_3)\cdots E(\tau_m)\psi_{m+1} \biggr)|\le \\
CT^{-1}h^{1-d}
\label{1-7}
\end{multline}
where I use my  standard notations ${\bar\chi}$ and $\chi$ in the future and ${\bar\chi}(t)={\bar\chi}(t/T)$ etc (see f.e. \cite{IRO1}). Here  and below $\Tr'$  is the ``scalar trace'' of the operator, and does not include taking matrix trace $\tr$.

Here and below $U(t)=e^{ih^{-1}tA}$ is the propagator of $A$ and $u(x,y,t)$ is its Schwartz' kernel.

So with $O(T^{-1}h^{1-d})$ error one could replace  one copy of $E(0)$ in $I_m$ by its standard implicit Tauberian approximation
\begin{equation}
h^{-1}\int _{-\infty}^0 F_{t\to h^{-1}\tau} \bigl({\bar\chi}_T(t) U(t)\bigr)\,d\tau
\label{1-8}
\end{equation}
and in by the virtues of the same arguments I can do it with another copy of $E(0)$. Therefore

\begin{proposition}\label{prop-1-1}
Under conditions $(\ref{1-5})$ with an error $O(T^{-1}h^{1-d})$ $I_m$ is equal to
\begin{equation}
h^{-m}\Tr'\int _{{\boldtau}\in \bR^{-,m}}
F_{{\mathbf t}\to  h^{-1}{\boldtau} } \Bigl({\bar\chi}_T(t_1) U(t_1)  \psi_2 {\bar\chi}_T(t_2) U(t_2)\psi_3\cdots U(t_m)\psi_{m+1} )\Bigr)  \,d\boldtau
\label{1-9}
\end{equation}
with ${\mathbf t}=(t_1,\dots,t_m)$, ${\boldtau}=(\tau_1,\dots,\tau_m)$.
\end{proposition}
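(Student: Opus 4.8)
The plan is to iterate the Tauberian reduction that has already been carried out for a single copy of $E(0)$ in passing from \eqref{1-6} to \eqref{1-7}, but now applied successively to all $m$ copies. First I would recall the structure established above: inequality \eqref{1-5} together with the trace-norm bound \eqref{1-2} shows that replacing one copy of $E(0)$ in $I_m$ by its implicit Tauberian approximation \eqref{1-8} introduces an error $O(T^{-1}h^{1-d})$, because the commutator of the difference with the remaining bounded operators ($\|E(\tau_j)\|\le 1$, and the $\psi_j$ are bounded) does not increase the estimate. The point is that \eqref{1-5} is uniform in $\tau'$ on $|\tau'|\le c$, so after the first substitution the same estimate can be applied again to the next factor $E(0)$, with the already-substituted factor now playing the role of one of the bounded operators of norm $O(1)$ in operator norm (and $O(h^{-d})$ in trace norm against a compactly supported cutoff).

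The key steps, in order, would be: (1) write $I_m=\Tr' E(0)\psi_2 E(0)\psi_3\cdots E(0)\psi_{m+1}$ as in \eqref{1-1} with all $\tau_j=0$; (2) replace the first factor $E(0)$ by \eqref{1-8}, incurring error $O(T^{-1}h^{1-d})$ by the argument already given for \eqref{1-7}; (3) observe that the resulting operator $h^{-1}\int_{-\infty}^0 F_{t\to h^{-1}\tau}({\bar\chi}_T(t)U(t))\,d\tau$ has operator norm $O(1)$ (it is a Tauberian approximant to a spectral projector) and, localized by a compactly supported $\psi$, trace norm $O(h^{-d})$, so it behaves exactly like an $E(\tau_j)$ for the purposes of \eqref{1-5}–\eqref{1-7}; (4) apply the same substitution to the second factor $E(0)$, and inductively to all $m$ of them, accumulating a total error $m\cdot O(T^{-1}h^{1-d})=O(T^{-1}h^{1-d})$ since $m$ is fixed; (5) finally, rewrite the product of $m$ expressions of the form \eqref{1-8} as a single multiple oscillatory integral by combining the $m$ one-dimensional inverse Fourier transforms $F_{t_j\to h^{-1}\tau_j}$ into the multidimensional $F_{{\mathbf t}\to h^{-1}{\boldtau}}$ and the $m$ integrals over $\tau_j\in\bR^-$ into one integral over ${\boldtau}\in\bR^{-,m}$, and pulling all the propagators $U(t_j)$ and cutoffs $\psi_{j+1}$ inside a single trace, which yields exactly \eqref{1-9}.

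The main obstacle I expect is step (3)–(4): making precise that the partial Tauberian approximant can legitimately substitute for a spectral projector inside the chain without losing the sharp $h^{1-d}$ order. One has to check that after substituting the first factor, the quantity to which \eqref{1-5} must be applied for the second factor is still of the form (difference of projectors at $\tau$ and $\tau'$) composed with operators bounded by $O(1)$ in operator norm and by $O(h^{-d})$ in trace norm when localized — and that the ${\bar\chi}_T$ truncation does not spoil this. This is essentially the statement that the Tauberian remainder is stable under composition with bounded operators, which is standard (see \cite{Ivr1}), but it needs to be invoked cleanly $m$ times. A secondary, purely bookkeeping point is keeping the dependence on $T$ explicit throughout, which is why the estimate is stated as $O(T^{-1}h^{1-d})$ rather than $O(h^{1-d})$; since $T\asymp 1$ here this costs nothing, but the formula \eqref{1-9} is written so that $T$ can later be taken large.
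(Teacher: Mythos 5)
Your proposal is correct and follows essentially the same route as the paper. The paper's argument is compressed into the two sentences preceding the proposition: (1.5) implies (1.6), a Tauberian lemma converts this into the one-factor replacement estimate (1.7), and then ``by the virtues of the same arguments I can do it with another copy of $E(0)$'' --- which is precisely your iterative substitution. Your step (3), making explicit that the Tauberian approximant (\ref{1-8}) has operator norm $O(1)$ (indeed it equals $2\pi\Phi(-h^{-1}TA)$ for a bounded $\Phi=\int_{-\infty}^{\cdot}\hat{\bar\chi}\,du$) and trace norm $O(h^{-d})$ against a cutoff, so that (1.5)--(1.7) can be reapplied after each substitution, is exactly the point the paper leaves implicit; spelling it out is a clarification rather than a deviation.
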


Note that here one can take any $T\in [Ch^{1-\delta}, c]$ (but then an error depends on $T$). Further, note that as $\dist(\supp \psi_j,\supp \psi_{j+1})\ge (c_0+\epsilon)T$ where $c_0$ here and below is the upper bound of the propagation speed on energy level $0$ and $x^{m+1}\Def x^1$, expression (\ref{1-9}) as $m=2$ or the similar expression as $m\ge 3$ become negligible  and I arrive to

\begin{corollary}\label{cor-1-2}
If in frames of proposition \ref{prop-1-1}  $\dist(\supp \psi_j,\supp \psi_{j+1})\ge (c_0+\epsilon) T$ for some $j=1,\dots,m$  then $|I_m|$ does not exceed $CT^{-1}h^{1-d}$.
\end{corollary}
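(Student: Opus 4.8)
The plan is to feed Proposition~\ref{prop-1-1} into a finite speed of propagation argument. By that proposition, replacing $I_m$ by the expression (\ref{1-9}) costs only $O(T^{-1}h^{1-d})$, which is already the size of the bound we want; so it suffices to show that (\ref{1-9}) itself is negligible. Performing the $\boldtau$-integrations (Fubini) rewrites (\ref{1-9}) as the trace of a cyclic product
\begin{equation*}
\Tr' \bigl(\widetilde E_T\,\psi_2\,\widetilde E_T\,\psi_3\cdots \widetilde E_T\,\psi_{m+1}\bigr),\qquad
\widetilde E_T\Def h^{-1}\int_{-\infty}^0 F_{t\to h^{-1}\tau}\bigl({\bar\chi}_T(t)U(t)\bigr)\,d\tau ,
\end{equation*}
of $m$ copies of the implicit Tauberian approximation (\ref{1-8}), separated by the cut-offs $\psi_k$.

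The first ingredient I would set up is that $\widetilde E_T=\phi_T(A)$, where $\phi_T$ is a smoothing, at the spectral scale $h/T$, of the indicator of $(-\infty,0]$; in particular $\3\widetilde E_T\3\le C$. By semiboundedness (\ref{0-9}) we have $E(\lambda)=0$ for $\lambda\le -C$, so only a fixed compact range of energies is relevant in $\phi_T(A)$, and on it ellipticity (\ref{0-8}) confines the frequencies to $|\xi|\le C_0$. The classical finite speed of propagation estimate then applies: with $c_0$ the propagation-speed bound as in the statement and for $Ch^{1-\delta}\le T\le c$, the Schwartz kernel $\widetilde E_T(x,y)$ --- together with all its derivatives --- is negligible once $|x-y|\ge (c_0+\epsilon_0)T$, since the cut-off ${\bar\chi}_T$ confines $|t|$ to $\le T$.

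Then I would fix $\epsilon_0<\epsilon$; if $\dist(\supp\psi_j,\supp\psi_{j+1})\ge (c_0+\epsilon)T$ for some $j\in\{1,\dots,m\}$ (cyclically, $\psi_{m+1}=\psi_1$), then after a cyclic rotation of the trace $\psi_j\,\widetilde E_T\,\psi_{j+1}$ occurs as a consecutive factor, and its kernel $\psi_j(x)\widetilde E_T(x,y)\psi_{j+1}(y)$ is supported in $\supp\psi_j\times\supp\psi_{j+1}\subseteq\{|x-y|\ge (c_0+\epsilon_0)T\}$, hence negligible in $C^\infty$ on a compact set, so that $\psi_j\widetilde E_T\psi_{j+1}$ is negligible in the trace norm. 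Since the remaining factors have bounded operator norm ($\3\widetilde E_T\3\le C$, $\psi_k\in L^\infty$), the trace in (\ref{1-9}) is bounded by $C\3\psi_j\widetilde E_T\psi_{j+1}\3_1$ and is therefore negligible. Adding the $O(T^{-1}h^{1-d})$ error from Proposition~\ref{prop-1-1} gives $|I_m|\le CT^{-1}h^{1-d}$; the argument is identical for $m=2$ and for $m\ge 3$.

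The step I expect to carry the weight, as opposed to bookkeeping, is the finite speed of propagation in precisely this form. The Tauberian approximant $\widetilde E_T$ is not compactly supported in the spectral parameter, so one must first exploit (\ref{0-8})--(\ref{0-9}) to reduce to a fixed compact energy window and a bounded frequency set before invoking the estimate, and then be careful with the $h^{1-\delta}$ threshold so that ``outside $|x-y|\le (c_0+\epsilon_0)T$'' genuinely means ``negligible'' rather than ``$O(T^{-1}h^{1-d})$''. Everything else --- Fubini, cyclicity of the trace, and combining one trace-norm-negligible factor with operator-norm bounds on the rest --- is routine.
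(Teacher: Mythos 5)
Your proof is correct and takes essentially the same approach the paper indicates: Proposition~\ref{prop-1-1} reduces $I_m$ to (\ref{1-9}), and the finite speed of propagation (after microlocalizing via (\ref{0-8})--(\ref{0-9}) to a bounded energy/frequency window) makes the sandwiched factor $\psi_j\widetilde E_T\psi_{j+1}$ negligible, hence (\ref{1-9}) itself negligible. The paper merely asserts this in the sentence preceding the corollary; you have filled in the same argument in detail.
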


\subsection{Smooth case} 
\label{sect-1-2}

The next step is to assume that $\omega$ is a  smooth function. Without any loss of the generality one can assume that $\omega$ is also compactly supported (since $\psi,\psi_1$ are). Then from
\begin{multline}
\omega (x^1,\dots,x^m) = \int \omega (y^1,\dots,y^m)\delta (y^1-x^1, \dots, y^m-x^m)\,dy =\\  \int \omega' (y^1,\dots,y^m) 
\theta (y^1-x^1)\cdots \theta (y^m-x^m)\,dy^1\cdots dy^m
\label{1-10}
\end{multline}
one arrives to 
\begin{equation}
I_m =\int\omega' (y^1,\dots,y^m) J_2(y^1,\dots,y^m)\,dy^1\cdots dy^m
\label{1-11}
\end{equation}
with $J_2(y^1,\dots,y^m)$ defined by $\omega=1$ and $\psi_j (x)$ redefined as $\psi_j(x)\theta (y^j-x)$ where here and below  $\theta (x)=\theta (x_1)\cdots \theta (x_d)$. Then I immediately arrive to

\begin{proposition}\label{prop-1-3} Let $\omega$ and $\psi_1,\dots,\psi_m$  be   smooth functions and let condition $(\ref{0-9})$ be fulfilled. Then $|I_m|\le Ch^{-d}$.
\end{proposition}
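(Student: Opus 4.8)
The plan is to reduce Proposition \ref{prop-1-3} to the already-established bound \ref{1-3} for the case $\omega=1$, using the representation \eqref{1-10}--\eqref{1-11} together with a uniform-in-parameters version of \ref{1-3}. First I would fix $\theta$ to be a Schwartz function whose Fourier transform satisfies $\prod_k \widehat\theta(\xi^k)\cdot(\text{something}) = \widehat{\delta}$ — concretely, write $\delta(x) = \int \theta(x-y)\,\theta'(y)\,dy$ coordinatewise (as in \eqref{1-10}), so that $\theta$ is smooth, rapidly decreasing, but $\theta'$ is a fixed tempered distribution; the point is that $\theta$ can be taken smooth and bounded with all derivatives bounded, and compactly-supported-ish up to negligible tails since after multiplication by the compactly supported $\psi_j$ and $\omega$ only a compact range of $y$ matters (the far tails of $\theta$ contribute $O(h^\infty)$ because the spectral projector kernels are $O(h^\infty)$ away from the diagonal once we are outside the light cone, or more simply because $\theta'$ is integrable against the compactly supported smooth $\omega'$).

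Next, for each fixed $\mathbf y=(y^1,\dots,y^m)$ in the relevant compact set, I would apply \ref{1-3} to $J_2(\mathbf y)\Def I_m$ computed with $\omega\equiv 1$ and with $\psi_j$ replaced by $\psi_j^{\mathbf y}(x)\Def \psi_j(x)\theta(y^j-x)$. Since $\|\psi_j^{\mathbf y}\|_{L^\infty}\le \|\psi_j\|_{L^\infty}\|\theta\|_{L^\infty}$ uniformly in $\mathbf y$, and the supports stay in a fixed compact set, \ref{1-3} gives $|J_2(\mathbf y)|\le Ch^{-d}$ with $C$ independent of $\mathbf y$. Then \eqref{1-11} yields
\begin{equation*}
|I_m|\le \int |\omega'(\mathbf y)|\,|J_2(\mathbf y)|\,d\mathbf y \le Ch^{-d}\int|\omega'(\mathbf y)|\,d\mathbf y = C'h^{-d},
\end{equation*}
since $\omega'$ is a fixed compactly supported (tempered) object paired against bounded test data; if $\omega'$ is merely a distribution rather than an $L^1$ function, I would instead integrate by parts in \eqref{1-11}, moving the derivatives in $\omega'=D^{\boldsymbol\alpha}(\text{smooth})$ onto $J_2$, and use that $D^{\boldsymbol\alpha}_{\mathbf y}J_2(\mathbf y)$ is again of the form \ref{1-3} with $\psi_j$ replaced by $\psi_j(x)(D^{\alpha^j}\theta)(y^j-x)$, still bounded uniformly — this is the standard Michlin-type trick and is why one writes $\delta$ as a product of $\theta$'s of a single variable each.

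The only genuine subtlety, and the step I would treat most carefully, is the legitimacy of swapping the $\mathbf y$-integration with the operator traces and of truncating the tails of $\theta$: one must check that the map $\mathbf y\mapsto J_2(\mathbf y)$ is measurable and dominated, which follows from continuity of $\mathbf y\mapsto \psi_j^{\mathbf y}$ in $L^\infty$ together with the trace-norm bound \eqref{1-2} (which controls $\|\psi E(\tau)\psi_1\|_1$ and hence makes the product of $m$ such factors a trace-class-valued continuous function of the parameters), and the tails are harmless because $\theta$ decays faster than any polynomial while everything it multiplies is compactly supported in $x$. Everything else is bookkeeping: no microlocal input beyond \ref{1-3} is needed, which is precisely why the proposition holds under the bare ellipticity/semiboundedness hypothesis \eqref{0-9} with no microhyperbolicity.
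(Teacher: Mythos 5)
Your strategy is exactly the paper's: plug the representation \eqref{1-10}--\eqref{1-11} into the definition of $I_m$, apply \ref{1-3} uniformly in the parameter $\mathbf y$, and integrate. Your steps 2--4 (uniform $L^\infty$ bound on $\psi_j^{\mathbf y}$, uniform $O(h^{-d})$ bound on $J_2(\mathbf y)$, final integration against $\omega'$) are correct, and the paper's proof of Proposition~\ref{prop-1-3} is indeed nothing more than this observation.

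However, you have misread \eqref{1-10}. The $\theta$ there is \emph{not} a Schwartz function; it is the coordinate-wise Heaviside step function, $\theta(x)=\theta(x_1)\cdots\theta(x_d)$ (stated explicitly right after \eqref{1-11}). The identity used is plain integration by parts: since $\delta=\partial\theta$ in each coordinate,
$\omega(x^1,\dots,x^m)=\pm\int\omega'(\mathbf y)\,\theta(y^1-x^1)\cdots\theta(y^m-x^m)\,d\mathbf y$
with $\omega'=\partial_{y^1_1}\cdots\partial_{y^m_d}\omega$, the mixed first partial in every variable. Since $\omega$ is smooth and compactly supported (which the paper notes can be assumed WLOG), $\omega'$ is a smooth compactly supported function, hence $L^1$; there is no distributional subtlety and no need for your ``backup'' plan of shifting derivatives onto $J_2$. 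Likewise $\theta$ is bounded but not smooth or rapidly decaying, which is fine: \ref{1-3} only requires $\psi_j^{\mathbf y}\in L^\infty$ with uniformly bounded norm, and $\|\theta\|_\infty=1$. Your formula $\delta(x)=\int\theta(x-y)\theta'(y)\,dy$ is not what \eqref{1-10} asserts and does not hold in either of your readings (with $\theta$ Heaviside, $\theta'=\delta$ gives $\theta\ast\delta=\theta\neq\delta$; with $\theta$ Schwartz, $\theta'$ fails to be a tempered distribution). Once the correct $\theta$ and $\omega'$ are used, the concerns you raise about tails, measurability, and dominated convergence are vacuous, and the argument reduces to the one-line proof the paper intends.
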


\begin{remark}\label{rem-1-4} As $m=2$ and $\omega,\psi_1,\psi_2\in L^\infty$
$|I_2|\le Ch^{-d}$ obviously (it follows from the estimate 
$\3 \psi  E\psi\3_2\le Ch^{-d/2}$ where $\3.\3_2$ is the Hilbert-Schmidt norm).
Can one prove the similar result for $m\ge 3$?
\end{remark}

\begin{proposition}\label{prop-1-5}  Let $\omega$ and $\psi_1,\dots, \psi_m$  be   smooth functions and let conditions  $(\ref{0-9})$  and $(\ref{1-5})$ be fulfilled. Then 

\medskip
\noindent
{\rm (i)} with an error $O(T^{-1}h^{1-d})$ $I_m$ is equal to
\begin{multline}
\cI_m= h^{-m}\int \int _{\boldtau\in \bR^{-,m}}\omega (x^1,\dots,x^m)
F_{{\mathbf t}\to  h^{-1}\boldtau }  
\Bigl({\bar\chi}_T(t_1) u(x^1,x^2,t_1)\psi_2 (x^2) {\bar\chi}_T(t_2)  \times\\ u(x^2,x^3, t_2)\psi_3(x^3) \cdots
U(t_m)\psi_{m+1} (x^{m+1})\Bigr)  \,d\tau\, dx^1\cdots dx^m
\label{1-12}
\end{multline}
with $x^{m+1}\Def x^1$.

\medskip
\noindent
{\rm (ii)} Further, if  $\dist(\supp \psi_j,\supp \psi_{j+1})\ge (c_0+\epsilon)T$ for some $j=1,\dots,m$  then  $|I_m|$ does not exceed $CT^{-1}h^{1-d}$ where so far $T\asymp1$.
\end{proposition}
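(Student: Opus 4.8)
The plan is to obtain Proposition \ref{prop-1-5} as an almost immediate consequence of Proposition \ref{prop-1-1} and Corollary \ref{cor-1-2}, by running the integral representation (\ref{1-10})--(\ref{1-11}) but retaining the Tauberian approximation instead of discarding it as in the derivation of Proposition \ref{prop-1-3}. First I would use the remark already made above that one may assume $\omega$ compactly supported; then $\omega'$ in (\ref{1-10}) is, up to sign, a fixed mixed partial derivative of $\omega$, hence again smooth and compactly supported, and in particular $\omega'\in L^1$. By (\ref{1-11}) we then have $I_m=\int\omega'(y)\,J(y)\,dy$, where $J(y)$ denotes the special-case expression (\ref{1-1}) (i.e.\ with $\omega=1$) in which every $\psi_j$ is replaced by $\psi_j^y$ with $\psi_j^y(x)\Def\psi_j(x)\,\theta(y^j-x)$.

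The decisive point is uniformity in $y$. Since $\psi_j^y\in L^\infty$ with $\|\psi_j^y\|_{L^\infty}\le\|\psi_j\|_{L^\infty}$ and $\supp\psi_j^y\subseteq\supp\psi_j$, the $L^\infty$ norms and the diameters of the supports of $\psi_1^y,\dots,\psi_m^y$ are bounded independently of $y$; hence (\ref{1-2}) and the assumed (\ref{1-5}) hold for these cut-offs with $y$-independent constants, and the Tauberian argument that produced Proposition \ref{prop-1-1} applies to each $J(y)$ with a remainder $O(T^{-1}h^{1-d})$ uniform in $y$. Multiplying by $\omega'(y)$ and integrating, the remainder contributes at most $\|\omega'\|_{L^1}\cdot CT^{-1}h^{1-d}=O(T^{-1}h^{1-d})$, while in the principal term I pass to Schwartz kernels and reverse the manipulation (\ref{1-10})--(\ref{1-11}): each factor $\psi_j^y(x^j)$ carries one $\theta(y^j-x^j)$, and $\int\omega'(y)\prod_j\theta(y^j-x^j)\,dy=\omega(x^1,\dots,x^m)$, so the main term reassembles precisely into the expression (\ref{1-12}) for $\cI_m$. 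This proves part (i).

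For part (ii) the same substitution suffices. If $\dist(\supp\psi_j,\supp\psi_{j+1})\ge(c_0+\epsilon)T$ for some $j$, then a fortiori $\dist(\supp\psi_j^y,\supp\psi_{j+1}^y)\ge(c_0+\epsilon)T$ uniformly in $y$, so Corollary \ref{cor-1-2} bounds $|J(y)|\le CT^{-1}h^{1-d}$ uniformly, whence $|I_m|\le\|\omega'\|_{L^1}\cdot CT^{-1}h^{1-d}$. Equivalently, one may invoke part (i) and note that every kernel $u(x^j,x^{j+1},t_j){\bar\chi}_T(t_j)$ in (\ref{1-12}) is negligible on the relevant product of supports, by finite propagation speed with bound $c_0$.

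I do not expect a genuine obstacle here; the one thing that must be checked is exactly the uniformity in $y$ of the Tauberian remainder and of (\ref{1-2}), i.e.\ that all constants in the propagation and Tauberian machinery underlying Proposition \ref{prop-1-1} depend on the cut-offs only through their $L^\infty$ norms and the diameters of their supports --- both of which are dominated here, independently of $y$, by those of $\psi_1,\dots,\psi_m$. This is routine bookkeeping, and the pattern ``freeze the incipient singularity with $\theta$, apply the $L^\infty$ result uniformly in the parameters, then integrate back against $\omega'$'' is precisely the template I would expect to reuse, with more care, for the sharper statements of the next subsection.
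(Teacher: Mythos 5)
Your proof is correct and it is precisely the route the paper takes (implicitly, since no explicit proof is printed): reduce to the constant-$\omega$ case via the integration-by-parts identity (\ref{1-10})--(\ref{1-11}), apply Proposition \ref{prop-1-1} and Corollary \ref{cor-1-2} with cut-offs $\psi_j^y(x)=\psi_j(x)\theta(y^j-x)$, and integrate against $\omega'\in L^1$. Your explicit observation that the constants in (\ref{1-2}), (\ref{1-5}) and the Tauberian step depend on the $\psi_j^y$ only through their $L^\infty$ norms and support diameters, both dominated uniformly in $y$, is exactly the point the paper leaves tacit, so nothing is missing.
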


\subsection {Singular homogeneous case}  
\label{sect-1-3}

\begin{theorem}\label{thm-1-6}
Let conditions $(\ref{0-9})$ and $(\ref{0-7})$ be fulfilled. Then 
$|I_m|\le Ch^{-d-(m-1)\kappa}$.
\end{theorem}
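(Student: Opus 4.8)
The plan is to reduce the singular-kernel estimate to the smooth case (Proposition~\ref{prop-1-3}) by a dyadic decomposition of $\Omega$ with respect to the ``short'' variables $\mathbf z=(z^1,\dots,z^m)$, $z^j=x^j-x^{j+1}$. Write $\omega=\sum_{\boldgamma} \omega_{\boldgamma}$, where $\boldgamma=(\gamma_1,\dots,\gamma_m)\in\bZ^m$ and $\omega_{\boldgamma}$ is supported in the zone $\{|z^j|\asymp 2^{-\gamma_j}\}$ for $1\le j\le m$ (using a partition of unity subordinate to these annular regions in each $z^j$, together with the homogeneity/derivative bounds of \eqref{0-7}). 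On the support of $\omega_{\boldgamma}$ one has $|\omega_{\boldgamma}|\le C\prod_j 2^{\kappa\gamma_j}$, and more importantly $2^{\gamma_j}\omega_{\boldgamma}$ (rescaled) is a smooth function with all derivatives bounded uniformly in $\boldgamma$ after the change of variables $z^j\mapsto 2^{\gamma_j}z^j$.

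The key point is then a scaling argument: contribution of $\omega_{\boldgamma}$ to $I_m$ is, up to the factor $\prod_j 2^{\kappa\gamma_j}$, an integral of the form \eqref{0-4}/\eqref{0-6} but with cut-off functions effectively localized to balls of radii $2^{-\gamma_j}$ in the differences. One cannot simply quote Proposition~\ref{prop-1-3}, since the constant there depends on the diameters of supports; instead I would use the fact that the estimate $\3\psi E(\tau)\psi_1\3_1\le Ch^{-d}$ in \eqref{1-2} is really an estimate $\3\psi E(\tau)\psi_1\3_1\le C h^{-d}\ell^d$ when $\psi,\psi_1$ are supported in a ball of radius $\ell\ge h$, together with the semiclassical rescaling $x\mapsto x/\ell$, $h\mapsto h/\ell$. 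Carrying the smallest relevant scale $\ell=\min_j 2^{-\gamma_j}$ (or, more carefully, the appropriate product of scales) through the trace-norm bound, the contribution of the block $\boldgamma$ is $O\bigl(h^{-d}\prod_j 2^{(\kappa-d/?)\gamma_j}\bigr)$-type; one must check the bookkeeping so that the resulting geometric series over $\boldgamma$ with $2^{-\gamma_j}\gtrsim h$ (the range below which the kernel is cut off or the estimate saturates) converges and sums to $Ch^{-d-(m-1)\kappa}$, the extra $h^{-(m-1)\kappa}$ coming from summing $\sum_{2^{-\gamma_j}\ge h}2^{\kappa\gamma_j}\asymp h^{-\kappa}$ over the $m-1$ independent difference variables (the constraint $\sum z^j=0$ removes one).

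For the very smallest scales, $2^{-\gamma_j}\lesssim h$, the smooth-kernel estimate is not available, so I would instead use crude bounds: on such a zone $|\omega_{\boldgamma}|\le Ch^{-\kappa}$ and the volume of the $z$-integration is $O(h^d)$ per difference variable, while $|e(x,y,\tau)|\le Ch^{-d}$ pointwise (from the $L^\infty\to L^\infty$ or the diagonal trace bound), giving again a total $O(h^{-d-(m-1)\kappa})$ after summing. Combining the two regimes yields the claimed estimate.

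The main obstacle I anticipate is the precise scaling bookkeeping in the middle paragraph: making sure that localizing the $m$ kernels $e(x^j,x^{j+1},\tau)$ to the anisotropic zone $\{|x^j-x^{j+1}|\asymp 2^{-\gamma_j}\}$ really produces a gain of the right power of $2^{-\gamma_j}$ in the trace norm, rather than only in one Hilbert--Schmidt factor, and that these gains from the $m$ consecutive kernels are not double-counted when they are linked in a cyclic product. This requires either iterating the localized $\3\cdot\3_1$, $\3\cdot\3_\infty$ estimates around the cycle (Hölder for trace ideals), or rescaling all variables simultaneously; I expect the cleanest route is the simultaneous semiclassical rescaling reducing each block to the already-established $h^{-d}$ bound at the rescaled parameter $h/\ell$, and then the only real work is verifying the exponents sum correctly to $h^{-d-(m-1)\kappa}$.
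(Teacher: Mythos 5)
Your overall strategy --- dyadic decomposition of $\omega$ in the differences $z^j=x^j-x^{j+1}$, the smooth estimate of Proposition~\ref{prop-1-3} at large scales, crude pointwise bounds at sub-$h$ scales --- is the same as the paper's, and your count of $m-1$ independent dyadic sums each giving $h^{-\kappa}$ is the right bookkeeping. The obstacle you flag at the end, however, is a genuine gap, and the rescaling route you favor does not close it. Because the localization is in the \emph{differences} $|z^j|\asymp\gamma_j$ rather than in position, there is no single dilation $x\mapsto x/\ell$, $h\mapsto h/\ell$ that renders a block smooth at a rescaled parameter once the $\gamma_j$ differ by large factors; when all $\gamma_j\asymp\gamma$ the isotropic rescaling works (cover $B(0,1)$ by $\asymp\gamma^{-d}$ balls of radius $\gamma$, and the counts cancel against the rescaled $(\gamma/h)^d$), but the anisotropic blocks, and especially the mixed blocks where some $\gamma_j\le\bar\gamma\asymp h$ and others are larger, remain.

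The paper closes exactly this gap with a ``sandwich'' argument, which is the ``H\"older-for-trace-ideals iterated around the cycle'' alternative you mention but do not carry out. If $J$ (with $\#J=r$) is the set of indices at the smallest scale $\bar\gamma$ (carrying cutoffs $\bar\beta(z^j/\bar\gamma)$) and the rest carry $\beta(z^j/\gamma_j)$ with $\gamma_j\ge\bar\gamma$, the cyclic product of the $m$ kernels $e(x^j,x^{j+1},\tau)$ is broken at the small-scale indices into $r$ consecutive ``sandwiches''; each sandwich has trace norm $O(h^{-d})$ by the smooth argument, the large-scale cutoffs contribute $\prod_{j\notin J}\gamma_j^{-\kappa}$ from the kernel bound, and the $r$ small-scale factors contribute $\bigl(\int_{|z|\le\bar\gamma}|z|^{-\kappa}\,dz\bigr)^{r-1}\asymp\bar\gamma^{(d-\kappa)(r-1)}$ (one is absorbed by the cyclic constraint $\sum_j z^j=0$). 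Summing the geometric series over $\gamma_j\ge\bar\gamma$ for $j\notin J$ and then setting $\bar\gamma\asymp h$ gives $Ch^{-dr}\bar\gamma^{-(m-1)\kappa+d(r-1)}=Ch^{-d-(m-1)\kappa}$, uniformly in $r$. One further correction: on the zone where all $|z^j|\lesssim h$ the homogeneity of degree $-(m-1)\kappa$ gives $|\omega|\lesssim h^{-(m-1)\kappa}$, not $h^{-\kappa}$ as you wrote; combined with the $(m-1)d$-dimensional constrained volume $h^{(m-1)d}$ and the pointwise bound $|e|\le Ch^{-d}$ on each of the $m$ factors this does give $h^{-d-(m-1)\kappa}$, but only with the full power of $\kappa$.
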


\begin{proof}
Let us replace $\Omega({\mathbf x},{\mathbf z})$ by $\Omega({\mathbf x},{\mathbf z}) 
\beta (z^1/\gamma_1)\cdots \beta (z^m/\gamma_m)$ where $\gamma_j\ge h$ and $\beta, {\bar\beta}$ are functions (on $\bR^d$) similar to $\chi,{\bar\chi}$ respectively. Then similarly to the analysis of the smooth case one can estimate the contribution of such partition element to $I_m$ by 
\begin{equation}
Ch^{-d}\bigl(\gamma_1\cdots \gamma_m)^{-1}(\gamma_1+\dots +\gamma_m)^{1-\kappa}
\label{1-13}
\end{equation}
and summation with respect to $\gamma_j\ge {\bar\gamma}=h$ results in the value of this expression as $\gamma_j={\bar\gamma}$ and the total estimate becomes what is claimed.

However one needs to consider the other partition elements when some of 
$\beta (z^j/\gamma_j)$ are replaced by ${\bar\beta} (z^j/{\bar\gamma})$. So we get ``sandwiches'' consisting of the factors 
\begin{equation*}
e(x^k,x^{k+1},\tau)\beta(z^{k+1}/\gamma_{k+1}) \cdots \beta(z^j/\gamma_j)e(x^j,x^{j+1},\tau)
\end{equation*}
with $j\ge k$ and in between them factors ${\bar\beta} (z^k/{\bar\gamma})$.

Let $J$ be the set of indices appearing in  ${\bar\beta} (z^k/{\bar\gamma})$ (for a given type of a ``sandwich''). One can see easily that the contribution of each ``sandwich'' to $I_m$ does not exceed
\begin{equation*}
Ch^{-dr}\prod_{j\notin J}\gamma_j^{-\kappa}\times \bigl(\int _{\{|z|\le {\bar\gamma}\}}|z|^{-\kappa}\,dz\bigr)^{r-1} \asymp Ch^{-dr}\prod_{j\notin J}\gamma_j^{-\kappa}\times {\bar\gamma}^{(d-\kappa)(r-1)}
\end{equation*}
where $r$ is the number of factors of each type.  Then after summation with respect to $\gamma_j\ge{\bar\gamma}$ one gets the same expression with $\gamma_j={\bar\gamma}$ i.e. $Ch^{-dr}{\bar\gamma}^{\kappa (m-r)+(d-\kappa)(r-1)}=Ch^{-dr}{\bar\gamma}^{-\kappa (m-1)+d(r-1)}$
which is exactly what we want as ${\bar\gamma}\asymp h$.
\end{proof} 

It immediately follows from the proof a stronger condition

\begin{proposition} \label{prop-1-7}
Let conditions $(\ref{0-9})$ and $(\ref{0-7})$ be fulfilled. Then replacing $\Omega({\mathbf x},{\mathbf z})$ by $\Omega({\mathbf x},{\mathbf z}) 
{\bar\beta} (z^1/\gamma)\cdots {\bar\beta} (z^m/\gamma)$ results in the error not exceeding
\begin{equation}
Ch^{-d-(m-2)\kappa}\gamma^{-\kappa}.
\label{1-14}
\end{equation}
\end{proposition}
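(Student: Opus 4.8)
The plan is to re‑run the proof of Theorem~\ref{thm-1-6} with the bottom dyadic scale of one of the variables $z^j=x^j-x^{j+1}$ raised from $\bar\gamma=h$ to $\gamma$. The error produced by the replacement is $I_m$ computed with $\Omega(\mathbf x,\mathbf z)$ replaced by $\Omega(\mathbf x,\mathbf z)\bigl(1-\bar\beta(z^1/\gamma)\cdots\bar\beta(z^m/\gamma)\bigr)$, and I would first telescope
\[
1-\prod_{j=1}^{m}\bar\beta(z^j/\gamma)=\sum_{k=1}^{m}\Bigl(\prod_{j<k}\bar\beta(z^j/\gamma)\Bigr)\bigl(1-\bar\beta(z^k/\gamma)\bigr),
\]
so that it suffices to estimate each of the $m$ summands; fix $k$ and assume $\gamma\ge 2h$ (the case $\gamma\asymp h$ being Theorem~\ref{thm-1-6} itself). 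In the $k$-th summand the factor $1-\bar\beta(z^k/\gamma)$ confines $z^k$ to $\{|z^k|\gtrsim\gamma\}$, while the factors $\bar\beta(z^j/\gamma)$, $j<k$, merely truncate the subsequent dyadic summation over the scale $\gamma_j$ to $\gamma_j\lesssim\gamma$, which can only decrease the bound; for $m=2$ the $z^j$ are not independent ($\sum_j z^j=0$), but then the whole cut‑off is a function of $z=x^1-x^2$ and the same reduction applies.

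Next I would apply, verbatim, the dyadic decomposition and the ``sandwich'' estimates of the proof of Theorem~\ref{thm-1-6}. Since $1-\bar\beta(z^k/\gamma)=\sum_{\gamma_k\ge\gamma}\beta(z^k/\gamma_k)$, the scale $\gamma_k$ now runs over dyadic values $\ge\gamma$ while every other $\gamma_j$ runs over dyadic values $\ge h$; in particular $z^k$ never lies in a $\bar\beta(\cdot/\bar\gamma)$-region, so only the sandwich types with $k\notin J$ occur. The per‑element bounds derived in the proof of Theorem~\ref{thm-1-6} are stated for arbitrary scales $\gamma_j\ge h$ and are insensitive to the extra information $\gamma_k\ge\gamma$; hence each partition element obeys exactly the same estimate as before, the homogeneity bookkeeping (the $\bar\gamma$-powers gained by integrating $|z^j|^{-\kappa}$ over the small regions $\{|z^j|\lesssim\bar\gamma\}$, convergent since $\kappa<d$, together with the overall $|\mathbf z|^{\kappa}$-gain that reconciles the pointwise bound $|\Omega|\lesssim\prod_j|z^j|^{-\kappa}$ with the homogeneity degree $-(m-1)\kappa$) being carried over unchanged.

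It then remains to redo the geometric summations. Summation over $\gamma_j\ge h$ for $j\ne k$ is dominated, as in Theorem~\ref{thm-1-6}, by its value at $\gamma_j=h$; summation over $\gamma_k\ge\gamma$ is likewise dominated by its value at $\gamma_k=\gamma$ — this is precisely the convergence ``at the bottom scale'' that made the series of Theorem~\ref{thm-1-6} summable. Evaluating the per‑element bound of Theorem~\ref{thm-1-6} at $\gamma_k=\gamma$, $\gamma_j=h$ $(j\ne k)$ and adding the finitely many sandwich types and the $m$ values of $k$ thus replaces one factor $h^{-\kappa}$ in the all‑bottom bound $Ch^{-d-(m-1)\kappa}$ by $\gamma^{-\kappa}$, i.e.\ multiplies it by $(h/\gamma)^{\kappa}\le 1$, which is exactly $Ch^{-d-(m-2)\kappa}\gamma^{-\kappa}$. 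The only step that is more than bookkeeping, and the point on which the argument hinges, is to confirm that in every sandwich type with $k\notin J$ the scale $\gamma_k$ enters the estimate solely through a factor decaying like $\gamma_k^{-\kappa}$ times quantities non‑increasing in $\gamma_k$, so that the $\gamma_k$-series genuinely peaks at $\gamma_k=\gamma$; this is immediate from the form of the estimates in the proof of Theorem~\ref{thm-1-6}.
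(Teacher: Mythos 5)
Your argument is correct and is exactly what the paper's terse ``it immediately follows from the proof'' is pointing at: the error region is the union of dyadic partition elements with at least one scale $\gamma_k\ge\gamma$, the sandwich bounds in the proof of Theorem~\ref{thm-1-6} depend on $\gamma_k$ (for $k\notin J$) only through the decreasing factor $\gamma_k^{-\kappa}$, and evaluating the geometric series at the raised bottom scale $\gamma_k=\gamma$ replaces one factor $h^{-\kappa}$ by $\gamma^{-\kappa}$. The telescoping identity you use to isolate the distinguished index $k$ is a clean way to package this, and the observation that $k\notin J$ automatically (since $\gamma>\bar\gamma=h$) matches the paper's intended bookkeeping.
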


Now let assume  instead of condition (\ref{1-4}) or (\ref{1-5}) that

\begin{claim}\label{1-15}
$a(x,\xi)$ is microhyperbolic on energy level $0$ and microhyperbolicity directions are (at each point) $\ell_\xi\cdot \partial_\xi$\,\footnote{\label{foot-3} So $\ell_x=0$.} with $\ell_\xi=\ell_\xi (x,\xi)$.
\end{claim}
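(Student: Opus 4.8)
Strictly speaking (1-15) is a hypothesis: it does not follow from (0-8)--(0-9) alone, and even for scalar $a$ it is genuinely restrictive (e.g.\ $a=|\xi|^2+x_1$ near $\xi=0,\ x_1=0$ is microhyperbolic only in an $x$-direction). What I would do is reduce it to a transparent pointwise non-degeneracy statement on the energy surface and then verify that statement in the cases that motivate the paper. The plan is to start from the definition: $a$ is microhyperbolic at $z_0=(x_0,\xi_0)$ on energy level $0$ in a direction $\ell=(\ell_x,\ell_\xi)\in\bR^{2d}$ iff $\langle(\ell_x\cdot\partial_x a+\ell_\xi\cdot\partial_\xi a)(z_0)\,v,v\rangle\ge\epsilon_0|v|^2-C|a(z_0)v|^2$ for all $v$; the content of (1-15) is that $\ell$ may be chosen with $\ell_x=0$, and I will show this holds exactly when the $\xi$-gradient of the eigenvalue branch of $a$ passing through $0$ does not vanish on $\{a=0\}$.

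First I would treat the scalar case, where this is immediate: on $\{a=0\}$ put $\ell_\xi=\partial_\xi a$, so that $\ell_\xi\cdot\partial_\xi a=|\partial_\xi a|^2$, which is $\ge\epsilon_0>0$ on a neighbourhood of the energy surface as soon as $\partial_\xi a\ne0$ there — this is (1-15). I would then note that the motivating non-magnetic Schr\"odinger symbol $a=\tfrac12(\sum_{j,k} g^{jk}\xi_j\xi_k-V)$ has $\partial_\xi a=g\xi$ and $\{a=0\}=\{\sum g^{jk}\xi_j\xi_k=V\}$, so positivity of $(g^{jk})$ forces $\xi\ne0$ — hence $g\xi\ne0$ — on that set wherever $V>0$; thus (1-15) holds on $\supp\psi_1\cup\dots\cup\supp\psi_m$ whenever $V$ is bounded below there by a positive constant, which is precisely the regime of \cite{IS,MQT1,MQT2,MQT3}.

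Next I would handle a matrix symbol branch by branch: near a point of $\{a=0\}$ where the eigenvalue $0$ of $a$ has constant multiplicity, the corresponding eigenvalue $\lambda(x,\xi)$ and spectral projector $\Pi(x,\xi)$ are smooth and, by the Hellmann--Feynman identity, $\langle\partial_\xi a(z_0)v,v\rangle=(\partial_\xi\lambda(z_0))|v|^2$ for $v\in\Ran\Pi(z_0)$; taking $\ell_\xi=\sum_k\beta(\lambda_k/\epsilon)\,\partial_\xi\lambda_k$ summed over the branches $\lambda_k$ meeting $0$ then gives microhyperbolicity with $\ell_x=0$, provided each $\partial_\xi\lambda_k\ne0$ on $\{\lambda_k=0\}$ — so again (1-15) reduces to a pointwise condition, checkable case by case (and it holds, in particular, for the operators covered by Theorems \ref{thm-2-19} and \ref{thm-2-20}(i)).

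The hard part will be exactly what is \emph{not} covered by this scheme, and the reason the paper keeps (1-15) as a hypothesis rather than a lemma: at eigenvalue crossings of higher multiplicity the pertinent branch is only Lipschitz, so the smooth branch-wise argument breaks down, and for the strong-field magnetic operator (0-2) with $\mu\to\infty$ the drift on the energy surface is along $\partial_x$ (cyclotron rotation) rather than $\partial_\xi$, so microhyperbolicity in an $(x,\xi)$-direction persists while the $\xi$-only refinement (1-15) fails. This is consistent with the $2$-dimensional magnetic case being deferred to another paper, and it is why in the sequel (1-15) is simply assumed.
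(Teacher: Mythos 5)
You are right that (\ref{1-15}) is a standing hypothesis, not a provable statement: the paper introduces it with ``let us assume\dots'' in place of (\ref{1-4})/(\ref{1-5}), and there is no internal proof to compare against. Your reduction to a pointwise condition is essentially what the paper itself uses implicitly: for the scalar case, taking $\ell_\xi=\partial_\xi a$ makes (\ref{1-15}) equivalent to $\nabla_\xi a\neq0$ on $\{a=0\}$, and the Schr\"odinger computation recovers precisely the paper's own statement in \S\ref{sect-2-4} that ``(\ref{1-15}) means $V\ge\epsilon_0$'' (condition (\ref{2-30})); your Hellmann--Feynman branch-wise reduction in the matrix case matches the paper's eigenvalue characterization (\ref{2-69}) in \S\ref{sect-2-6}. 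So the diagnostic framework is sound.

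However, two of your concluding assertions are inaccurate. First, (\ref{1-15}) does \emph{not} ``hold, in particular, for the operators covered by Theorem~\ref{thm-2-19}'': that theorem assumes only condition $(\ref{2-47})_n$, i.e.\ $\sum_{0\le k\le n}|\nabla_\xi^k a|\ge\epsilon_0$, which allows $\nabla_\xi a$ to vanish on $\{a=0\}$. Indeed the entire point of \S\S\ref{sect-2-4}--\ref{sect-2-5} (Propositions \ref{prop-2-11}--\ref{prop-2-18} and Theorem~\ref{thm-2-19}) is to work \emph{without} (\ref{1-15}) by rescaling with $\gamma,\rho$ vanishing on the set where microhyperbolicity in the $\xi$-direction degenerates; the paper even carries out a separate Part II of the proof precisely to handle the zone where (\ref{2-45}) fails. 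Second, the reason the 2D magnetic problem is deferred is not that (\ref{1-15}) fails there: for magnetic Schr\"odinger $\partial_\xi a = g\,(\xi-\mu V_\cdot)$ is the kinetic momentum, which is nonvanishing on $\{a=0\}$ whenever $V\ge\epsilon_0$, so (\ref{1-15}) still holds. The stated difficulty is dynamical --- ``trajectories having many loops'' --- which spoils the long-time propagation estimate (and hence the choice of $T^*$), not the pointwise microhyperbolicity direction. Your write-up would be correct if these two parenthetical claims were dropped.
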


\begin{proposition}\label{prop-1-8}
Let conditions $(\ref{0-9})$, $(\ref{0-7})$ and $(\ref{1-15})$ be fulfilled. Then  replacing $\Omega({\mathbf x},{\mathbf z})$ by $\Omega({\mathbf x},{\mathbf z}) 
{\bar\beta} (z^1/\gamma)\cdots {\bar\beta} (z^m/\gamma)$ results in the error not exceeding
\begin{equation}
Ch^{1-d-(m-2)\kappa}\gamma^{-1-\kappa}.
\label{1-16}
\end{equation}
This is equivalent to taking $T\asymp \gamma$ in $(\ref{1-8})$ and plugging Schwartz kernel of it instead of $e(x,y,0)$ in the definition of $I_m$.
\end{proposition}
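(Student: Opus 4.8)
The plan is to rerun the dyadic decomposition from the proof of Theorem~\ref{thm-1-6}, feeding into it the sharper estimate that (\ref{1-15}) makes available, so that the contribution of a partition element at scale $\gamma'$ gains the Tauberian factor $h/\gamma'$ over the one used there; since the surviving sum turns out to be dominated by $\gamma'\asymp\gamma$, this produces the improvement over Proposition~\ref{prop-1-7}. The function $1-{\bar\beta}(z^1/\gamma)\cdots{\bar\beta}(z^m/\gamma)$ is supported where $|z^j|\gtrsim\gamma$ for at least one $j$, so the error of the replacement is $I_m$ with weight $\Omega\cdot\bigl(1-\prod_j{\bar\beta}(z^j/\gamma)\bigr)$; I decompose it exactly as in Theorem~\ref{thm-1-6} into partition elements $\Omega\cdot\prod_j\beta(z^j/\gamma_j)$ together with the ``innermost'' and ``mixed'' pieces, over dyadic scales $\gamma_j\ge h$ with $\gamma':=\max_j\gamma_j\gtrsim\gamma$, and bound each separately.

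Fix such an element at scale $\gamma'$. Its $m$ arguments lie in a common ball of radius $\asymp\gamma'$; I localize by a partition of unity subordinate to balls $B(\nu,\gamma')$ ($\asymp\gamma'^{-d}$ of them meeting $\supp\psi_1$) and, on each, rescale $x=\nu+\gamma'y$, so that $hD_x=\hbar D_y$ with $\hbar=h/\gamma'$; the operator becomes an $\hbar$-operator with symbol $a(\nu+\gamma'y,\eta)$, still obeying (\ref{0-9}), while the cut-offs become of unit scale and the weight becomes, by (\ref{0-7}), of size $\asymp\gamma'^{-(m-1)\kappa}$, uniformly smooth and supported off the diagonal at scale $\asymp1$. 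This is where (\ref{1-15}) is essential: under the dilation a microhyperbolicity direction with a non-zero $x$-component would multiply $\gamma'\,\partial_xa$, i.e.\ be weakened by $\gamma'$, whereas a purely-$\xi$ direction $\ell_\xi\cdot\partial_\xi$ passes to $\ell_\xi(\nu+\gamma'y,\eta)\cdot\partial_\eta$ with unchanged microhyperbolicity constant, so the rescaled operator is microhyperbolic on the energy level uniformly in $\gamma'$ and $\nu$. I then apply the smooth-case analysis of Proposition~\ref{prop-1-5} to the rescaled operator with $T\asymp1$, equivalently $T\asymp\gamma'$ in the original variables — which is exactly the last sentence of the statement: its Tauberian remainder is $\asymp\hbar^{1-d}$ per unit volume, its main term $\cI_m$ is negligible unless $T\gtrsim\gamma'$ by finite speed of propagation, and for $T\asymp\gamma'$ the microhyperbolic parametrix for $u$ at unit space–time scale supplies the off-diagonal decay of $e$ that microhyperbolicity provides and that bounds $\cI_m$. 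Undoing the rescaling — Jacobians, the weight bound $|D^{\boldalpha}_{\mathbf x}D^{\boldbeta}_{\mathbf z}\Omega|\lesssim\gamma'^{\kappa-|\boldbeta|}\prod_j\gamma_j^{-\kappa-|\beta^j|}$, and the factor $\gamma'^{-d}$ per propagator coming from $e=\gamma'^{-d}e_\hbar$ — and summing over $\nu$ reproduces the Theorem~\ref{thm-1-6} bound for this element with the extra factor $h/\gamma'$.

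Summing over the admissible scales (the internal sums organised by (\ref{0-7}) as in Theorem~\ref{thm-1-6}, the ``mixed'' sandwiches treated there verbatim and now carrying the same extra factor, and the outer sum over $\gamma'\gtrsim\gamma$ dominated by $\gamma'\asymp\gamma$) yields the claimed $O\bigl(h^{1-d-(m-2)\kappa}\gamma^{-1-\kappa}\bigr)$. For the equivalence: the Schwartz kernel of (\ref{1-8}) with $T\asymp\gamma$ is, by finite speed of propagation, negligible for $|x-y|\gtrsim\gamma$, so substituting it for $e(x,y,0)$ in $I_m$ amounts to inserting $\prod_j{\bar\beta}(z^j/\gamma)$ into the weight, and on $\{|z^j|\lesssim\gamma\ \forall j\}$ the discrepancy between $e$ and that kernel is, propagator by propagator, controlled by the Tauberian estimate (\ref{1-7}) with $T\asymp\gamma$ and absorbed into the same shell-by-shell bound.

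I expect the real obstacle to be the per-shell estimate of $\cI_m$ — for matrix operators, and above all for $m\ge3$: there one cannot get away with size bounds on the propagators but must extract genuine cancellation from the microhyperbolic parametrix for the loop $e(x^1,x^2,0)\cdots e(x^m,x^1,0)$, uniformly under the dilation (this is the ``many loops'' difficulty the author flags), and one must then carry it through the multi-scale summation with (\ref{0-7}) so that the constants accumulate neither with $m$ nor with the number of localization balls.
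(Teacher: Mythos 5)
Your argument is correct and lands on the same mechanism the author invokes, but you package it differently. The paper's proof is a one‑sentence pointer: rerun the dyadic sandwich decomposition of Theorem~\ref{thm-1-6}, note that the remaining sandwiches all contain a factor $\beta(z^j/\gamma_j)$ with $\gamma_j\ge\gamma$, and use Proposition~\ref{prop-1-1} / Corollary~\ref{cor-1-2} with $T\asymp\gamma_j$ to attach a Tauberian gain $h/\gamma_j$ directly to that factor, then sum. You instead rescale each shell $B(\nu,\gamma')$ to unit size with $\hbar=h/\gamma'$ and apply the unit‑scale Tauberian bound there; the $h/\gamma'$ gain then appears as $\hbar=h/\gamma'$, and undoing the rescaling (Jacobians cancel against the $\gamma'^{-d}$ in each propagator, the weight contributes $\gamma'^{-(m-1)\kappa}$ by homogeneity) reproduces the shell‑by‑shell bound. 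This is a legitimate and arguably more transparent implementation of ``combined arguments'': in particular it makes vivid \emph{why} hypothesis~(\ref{1-15}) with $\ell_x=0$ is the right one — the dilation acts on $x$ only, so only a purely‑$\xi$ microhyperbolicity direction survives with uniform constant — a point the paper leaves implicit. Two small cautions on your outline: (a) within a fixed shell $\gamma'=\max_j\gamma_j$ the sub‑scales $\gamma_j/\gamma'$ can be much smaller than $1$, so after rescaling you do \emph{not} land in the genuinely smooth case of Proposition~\ref{prop-1-5} — the sandwich pieces with some $\gamma_j=\bar\gamma$ become a scale‑$\hbar$ diagonal singularity and you still have to run Theorem~\ref{thm-1-6}'s sandwich accounting at the $\hbar$‑level, which the paper handles by doing the decomposition once in original variables rather than recursively after rescaling; and (b) your remark that $\cI_m$ ``is negligible unless $T\gtrsim\gamma'$'' has the inequality backwards — the relevant mechanism is Corollary~\ref{cor-1-2}, i.e.\ one takes $T$ a \emph{small} multiple of $\gamma'$ so that $\dist\ge(c_0+\epsilon)T$ kills the main term and $|I_m|\le CT^{-1}h^{1-d}$. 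Neither of these affects the final count, and your arithmetic does reproduce~(\ref{1-16}).
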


\begin{proof} Proof follows from the combined arguments of the proofs of Theorem \ref{thm-1-6} and Proposition  \ref{prop-1-1}; in this case one needs to consider only ``sandwiches'' containing  at least one factor $\beta (x^j/\gamma_j)$  with $\gamma_j\ge \gamma$ which accounts for  a factor $h/\gamma_j$ and summation with respect to partition results in an extra factor $h/\gamma$.
\end{proof}

So one needs to study expression (\ref{1-12}) with some $T=T^*$; I remind that the remainder estimate contains factor $T^{*\,-1}$. One can decompose ${\bar\chi}_{T^*}(t)$ into the sum of ${\bar\chi}_{\bar T}(t)$ and $\chi_T(t)$ with $T$ running between ${\bar T}$ and $T^*$ and also one can take ${\bar T}=Ch$. Then expression (\ref{1-12}) becomes the sum of the similar expressions
with ${\bar\chi}_T(t)$ (with $T=T^*$) replaced by $\phi_{jT_j}(t)$
where \emph{either} $\phi_j=\chi$ and ${\bar T}\le T_j\le T^*$ \emph{or} $\phi_j={\bar\chi}$ and $T_j={\bar T}$.

In this expression as $\phi_j=\chi$ one can replace $\int_{-\infty}^0(\dots )\,d\tau $ by $(\dots)|_{\tau=0}$ simultaneously replacing $h^{-1}\chi_T(t)$ by $it^{-1}\chi_T(t)=T^{-1}\phi_T(t)$ with $\phi (t)= it^{-1}\chi (t)$; so we get a modified expression (\ref{1-12}) with $r$ factors  ${\bar\chi}_{\bar T}(t_j)$ and $\tau_j$ snapped to $0$ for $j\in J$, $r=\# J$ and integration over $\bR^{-\, (m-r)}$ and $(m-r)$ factors $\phi_T(t_k)$, $k\notin J$; furthermore, factor $h^{-m}$ is replaced by $h^{-r}\prod_{k\notin J}T_k^{-1}$.

\begin{proposition}\label{prop-1-9}
Let conditions $(\ref{0-9})$ and $(\ref{1-15})$ be fulfilled and let
$\omega $ be a smooth function, 
\begin{equation}
\omega = O \bigl( (|x^1-x^2|+\dots +|x^m-x^1|)^K\bigr).
\label{1-17}
\end{equation}
Then $I_m=O(h^{1-d})$ as $K>1$ and $I_m=O(h^{1-d}|\log h|)$ as $K=1$. 
\end{proposition}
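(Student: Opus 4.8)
The plan is to reduce to the Tauberian expression $\cI_m$ of Proposition \ref{prop-1-5} — since $(\ref{1-15})$ implies $(\ref{1-5})$, this costs only $O(h^{1-d})$ when $T^*\asymp1$ — and then to estimate $\cI_m$ by a dyadic decomposition of $\omega$ in the size of the difference vector ${\mathbf z}=(x^1-x^2,\dots,x^m-x^1)$. I would write $\omega=\omega_{\le h}+\sum_{h\le\gamma\lesssim1}\omega_\gamma$ (dyadic), with $\omega_\gamma$ supported in $\{|{\mathbf z}|\asymp\gamma\}$ and $\omega_{\le h}$ in $\{|{\mathbf z}|\le Ch\}$; $(\ref{1-17})$ and smoothness give $|D^{\boldbeta}_{\mathbf z}\omega_\gamma|\le C_{\boldbeta}\gamma^{K-|\boldbeta|}$ and $\|\omega_{\le h}\|_\infty\le Ch^K$. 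It is enough to prove
\begin{equation*}
|I_m[\omega_{\le h}]|\le Ch^{K-d},\qquad |\cI_m[\omega_\gamma]|\le Ch^{1-d}\gamma^{K-1}\quad (h\le\gamma\lesssim1),
\end{equation*}
since $h^{K-d}\le h^{1-d}$ for $K\ge1$ and $h^{1-d}\sum_{h\le\gamma\lesssim1}\gamma^{K-1}$ is $O(h^{1-d})$ for $K>1$ and $O(h^{1-d}|\log h|)$ for $K=1$.

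The innermost term is handled by hand: fixing $x^1\in\supp\psi_1$, the set $\{|x^j-x^{j+1}|\le Ch\ \forall j\}$ has measure $\asymp h^{(m-1)d}$, and $|e(x^j,x^{j+1},0)|\le e(x^j,x^j,0)^{1/2}e(x^{j+1},x^{j+1},0)^{1/2}\le Ch^{-d}$, whence $|I_m[\omega_{\le h}]|\le C\|\omega_{\le h}\|_\infty h^{-dm}h^{(m-1)d}=Ch^{K-d}$.

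For a dyadic piece $\omega_\gamma$ with $h<\gamma\lesssim1$ the point is to combine the vanishing $\|\omega_\gamma\|_\infty\le C\gamma^K$ with a Hilbert--Schmidt estimate. On $\supp\omega_\gamma$ one has $|{\mathbf z}|\asymp\gamma$ and $\sum_jz^j=0$, so at least two edges $j$ satisfy $|x^j-x^{j+1}|\asymp\gamma$; I would split $\supp\omega_\gamma$ into the finitely many parts according to which two, say $j_0\ne j_1$, are the largest, and on each part cut the kernels on edges $j_0,j_1$ off to $|x^j-x^{j+1}|\asymp\gamma$ (harmless there). The coupling by $\omega_\gamma$ is then distributed over the edges through its Fourier expansion along ${\mathbf z}$, $\omega_\gamma=\gamma^K\sum_N c_N\,e^{i\langle{\mathbf z},\zeta_N\rangle/\gamma}$ with $(c_N)$ rapidly decaying, because $e^{i\langle{\mathbf z},\zeta_N\rangle/\gamma}=\prod_j e^{i\langle x^j-x^{j+1},\zeta^j_N\rangle/\gamma}$ is absorbed edge by edge and, being of modulus one, changes none of the norms below. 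Each resulting term is $\Tr'$ of a product of $m$ operators; estimating it by the two Hilbert--Schmidt norms on the edges $j_0,j_1$ and operator norms $\le C$ on the remaining $m-2$ edges, and using
\begin{equation*}
\3\chi\,e(\cdot,\cdot,0)\,\chi\3_\infty\le1,\qquad \iint_{|x-y|\asymp\gamma}|e(x,y,0)|^2\chi\chi\le Ch^{1-d}\gamma^{-1},
\end{equation*}
the latter being the limiting ($\kappa=0$) form of $(\ref{1-16})$ (this is where $(\ref{1-15})$ is used, and it equally holds with $\tilde e_{T^*}$ in place of $e$), one gets $|\cI_m[\omega_\gamma]|\le C\gamma^K(h^{(1-d)/2}\gamma^{-1/2})^2=Ch^{1-d}\gamma^{K-1}$ after summing the (convergent) series in $N$. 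For $m=2$ this is just $|I_2|\le\iint|\omega|\,|e(x,y,0)|^2\psi\le C\sum_\gamma\gamma^K\!\iint_{|x-y|\asymp\gamma}|e(x,y,0)|^2\psi$.

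The part I expect to be the real obstacle, and where $m\ge3$ genuinely differs from the trivial bound $|I_m|\le Ch^{-d}$, is the need to keep the trace/cancellation structure: one may not bound $\omega_\gamma$ by $\|\omega_\gamma\|_\infty$ and pass to the moduli of the kernels, since that would cost a spurious $h^{-d(m-2)/2}$; moduli must enter only inside Hilbert--Schmidt norms and only on two edges, which forces the somewhat delicate bookkeeping of the two ``heavy'' edges, the momentum shifts, and the finitely many splittings above. The one microlocal input that is not elementary is the tail estimate $\iint_{|x-y|\gtrsim\gamma}|e(x,y,0)|^2\chi\chi\le Ch^{1-d}\gamma^{-1}$, which rests on $(\ref{1-15})$ (the microhyperbolicity direction lying in $\xi$, hence propagation at speed bounded away from $0$); this is exactly the estimate that fails for the $2$-dimensional magnetic operator with many loops, which is why that much more delicate analysis is postponed.
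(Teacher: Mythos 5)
Your proposal is correct, and at the level of macrostructure it matches the paper's (one-line) proof: dyadic decomposition in $|{\mathbf z}|$, a gain of $h/\gamma$ from microhyperbolicity on each dyadic scale, and summation over scales. The difference is in how the gain is extracted for $m\ge3$. The paper invokes the Tauberian "sandwich" machinery: combine the dyadic cutoffs $\beta(z^j/\gamma_j)$ from the proof of Theorem \ref{thm-1-6} with the decomposition of $\bar\chi_{T}$ into pieces $\phi_{j,T_j}$ and the snapping of the $\tau_j$-integrals to $\tau_j=0$, as spelled out after Proposition \ref{prop-1-8}; the factor $h/\gamma_j$ then appears as $T_j^{-1}$ in place of $h^{-1}$. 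You instead keep $E(0)$ (after a single Tauberian replacement at scale $T^*\asymp1$), note that $\sum_j z^j=0$ forces two heavy edges $j_0\ne j_1$ on each dyadic shell, and close the trace by Schatten--H\"older with two Hilbert--Schmidt factors (each of size $h^{(1-d)/2}\gamma^{-1/2}$, the $\kappa=0$ limit of $(\ref{1-16})$) and $m-2$ operator-norm factors. Your route is more explicit and self-contained, and it makes the role of microhyperbolicity transparent; the paper's route is more uniform with the rest of Section \ref{sect-1}. Two small points worth tidying: (a) the Fourier decoupling should be performed jointly in $\mathbf z$ (rescaled by $\gamma$) and in the base point, say $x^1$ --- otherwise the coefficients $c_N$ still couple the variables; this is routine because $\omega$ and all its $x$-derivatives vanish like $|{\mathbf z}|^K$ on the diagonal. (b) You should say a word on why the reduction $I_m\to\cI_m$ (Proposition \ref{prop-1-5}) is applied once to the full smooth $\omega$ rather than termwise to the $\omega_\gamma$'s, since the implicit constant there depends on derivatives of $\omega$ and would otherwise produce a spurious $|\log h|$; as written you do it once, which is the right choice, but the motivation is worth stating.
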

\begin{proof}
Proof follows from the combined arguments of the proofs of Theorem \ref{thm-1-6} and Proposition  \ref{prop-1-1} like in Proposition \ref{prop-1-8}. Here however the main contribution (as $K\ge 1$) is delivered by zone $\{|x^1-x^2|+\dots +|x^m-x^1|\asymp 1\}$. 
\end{proof}
One can consider certain generalizations but I will do it later.

\section{Calculations}\label{sect-2}

Now our purpose is to go from implicit Tauberian expression (\ref{1-12}) to more explicit one.

\subsection{Constant Coefficients Case}
\label{sect-2-1} 

Let us first consider case $A(x,\xi)=A(\xi)$. In this case 
\begin{equation}
e(x,y,\tau)= (2\pi h)^{-d}\int e^{ih^{-1}\langle x-y,\xi\rangle}E(\xi)\,d\xi
\label{2-1}
\end{equation}
where $E(\xi,\tau)$ is the matrix projector corresponding to $A(\xi)$.
Then
\begin{multline}
I_m  = (2\pi h)^{-dm} \int \int \omega (x^1,\dots,x^m) E(\xi^1,0)\cdots E(\xi^m,0) \times\\
e^{ih^{-1}\bigl(\langle x^1-x^2,\xi^1\rangle+\langle x^2-x^3,\xi^2\rangle +\dots +\langle x^m-x^1,\xi^m\rangle\bigr)}\,dx^1\cdots dx^m\,d\xi^1\cdots d\xi^m.
\label{2-2}
\end{multline}\vglue-5truemm
\begin{claim}\label{2-3}
From now and until the end of the paper I am assuming that $m=2$.
\end{claim}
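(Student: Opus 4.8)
The displayed line \textup{(\ref{2-3})} is a standing convention rather than an assertion, so strictly speaking there is nothing to prove; the ``plan'' is simply to adopt $m=2$ and carry the remaining analysis out under it. It is nonetheless worth recording why this is the right place to specialize and what structural simplifications it delivers, since everything in Section~\ref{sect-2} rests on it.

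For $m=2$ the weight $\omega(x,y)=\Omega(x,y;x-y)$ depends on a \emph{single} relative variable $z=x-y\in\bR^d\setminus 0$, and the hypotheses collapse: Condition~\textup{(\ref{0-5})} becomes honest $(-\kappa)$-homogeneity in that one variable, while the multi-variable derivative bounds and the linear constraint $\sum_k z^k=0$ in Condition~\textup{(\ref{0-7})} reduce to the classical Calder\'on--Zygmund estimates $|D^\beta_z\Omega(x,y;z)|\le C_\beta |z|^{-\kappa-|\beta|}$. Consequently the implicit Tauberian identity of Proposition~\ref{prop-1-5} and the constant-coefficient reduction \textup{(\ref{2-2})} involve only a two-fold $x$-integral against a two-fold $\xi$-integral; after the change of variables $(x^1,x^2)\mapsto(x^1,z)$ with $z=x^1-x^2$, the $z$-integration runs against the single oscillatory factor $e^{ih^{-1}\langle z,\xi^1-\xi^2\rangle}$, so one variable can be eliminated and the result put in a form where the $h^{-d-\kappa}$ scaling and the leading coefficient can be extracted by stationary phase. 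None of this is available verbatim for $m\ge 3$: the proof of Theorem~\ref{thm-1-6} already has to track the many families of ``sandwiches'' indexed by the subset $J$, and the constraint $\sum_k z^k=0$ obstructs the clean reduction above — precisely the steps the author says were not fully implemented in the general case.

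So the ``main obstacle'' here is not a proof step at all but the decision recorded in \textup{(\ref{2-3})}: the substantive work — replacing $e(x,y,0)$ by the Weyl expression \textup{(\ref{0-3})} and computing the coefficient of $h^{-d-\kappa}$ — is exactly what follows, under this hypothesis, in the remainder of Section~\ref{sect-2}.
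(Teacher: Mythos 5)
Your reading is correct: (\ref{2-3}) is a standing convention, not a proposition, so the paper offers no proof and none is required. Your explanatory remarks about why $m=2$ simplifies the reduction in (\ref{2-2})--(\ref{2-8}) and why $m\ge 3$ is harder are consistent with what the author himself says (Remark~\ref{rem-2-1}(ii) and the introduction).
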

Without any loss of the generality one can assume that either $\omega (x,y)$ is of the form
\begin{equation}
\omega (x,y)=\Omega \bigl({\frac 1 2}(x+y), x-y\bigr).
\label{2-4}
\end{equation}
or it is of the  same singular type as before but multiplied by $(x_k-y_k)$. However in the latter case (under microhyperbolicity condition) one can apply a Tauberian approximation for $e(x,y,\tau)$ equal $0$ with the remainder estimate 
$O(h^{1-d}|x-y|^{-1})$ (in the same trace class as before) which leads to $I\approx 0$ with the sought remainder estimate $O(h^{1-d-\kappa })$.

In the former case (\ref{2-4})  we get
\begin{equation}
I \Def I_2 = \int \cJ (x)\,dx,\label{2-5}
\end{equation}
where
\begin{align}
\cJ(x)=
&2(2\pi h)^{-2d}   \iiint   \Omega (x, z) E(\xi,0) E(\eta,0) 
e^{ih^{-1}\langle z,\xi-\eta \rangle}\,dz d\xi d\eta =
G(x) h^{-d-\kappa},\label{2-6}\\
\intertext{with}
G (x) =& \iint  {\hat\Omega} (x, \xi-\eta) E(\xi,0) E(\eta,0) \,d\xi d\eta,\label{2-7}
\end{align}
and
\begin{equation}
{\hat\Omega} (x, \zeta) =2(2\pi )^{-2d}\int  \Omega (x, z) 
e^{i \langle z,\zeta \rangle}  \,dz.
\label{2-8}
\end{equation}
One always can take $\Omega$ having a compact support with respect to $x$ (since we had originally cutoffs $\psi_1(x^1),\dots, \psi_m(x^m)$.

\begin{remark}\label{rem-2-1} (i) One can easily generalize (\ref{2-5})--(\ref{2-8}) to $m>2$.

\medskip
(ii) Integral (\ref{2-8}) converges as $|z |\le 1$ since $\kappa<d$. On the other hand it defines a distribution with respect to $\zeta$ which is  positively homogeneous of degree $\kappa -d$ and also is smooth  as $\zeta\ne 0$; thus ${\hat\Omega }\in L^1_\loc$ and (\ref{2-7}) is well-defined. However generalization to $m>2$ is not that easy.
\end{remark}

\subsection{General Microhyperbolic Case}
\label{sect-2-2}
Note first that due to the microhyperbolicity condition (\ref{1-15}) one should take $T\asymp \gamma$ as $m=2$\,\footnote{\label{foot-4} And $T_j\asymp |x^j-x^{j+1}|$ in the general case.}. Otherwise as $T\in [ Ch^{1-\delta}, T^*]$, $T^*$ is the small constant, the contribution of $[T/2,T]\cup[-T,-T/2]$ would be negligible.

To calculate $u$ let us apply the successive approximation method on the time interval $[-T,T]$ with $h^{1-\delta}\le T$.  Then plugging the successive approximation into any copy on that interval we arrive to an error in $u$ in the trace norm equal to $O(h^{-d} (T^2/h)^n)$ where $n$ is the number of the first dropped term (starting from $0$). This leads to the error in $I$  $O\bigl(h^{-d-\kappa} (T^2/h)^n \gamma^{-\kappa}\bigr)$ as $T\ge \gamma$. Since under microhyperbolicity assumption (\ref{1-15}) we need to consider only $T\asymp \gamma$, the error is  $O(h^{-d} (T^2/h)^n T^{-\kappa})$. However if we just take $u=0$ then we get an error $O(h^{1-d}  T^{-1-\kappa})$.

Finding $T$ from the equation
\begin{equation*}
h^{-d} (T^2/h)^n  = h^{1-d}T^{-1}
\end{equation*}
we get 
\begin{equation}
T= h^{(n+1)/(2n+1)}
\label{2-9}
\end{equation}
(which is greater than $ h^{1-\delta}$ with $\delta>0$) and this leads to an error
\begin{equation}
O\bigl( h^{1-d- (n+1)(\kappa +1)/(2n+1)}\bigr)
\label{2-10}
\end{equation}

\begin{proposition}\label{prop-2-2}
Let conditions $(\ref{0-9})$, $(\ref{0-7})$ and $(\ref{1-15})$ be fulfilled. 
Then 

\medskip
\noindent
{\rm (i)} Using successive approximation as $|t|\le T$ given by $(\ref{2-2})$ and taking $u=0$ otherwise we get $I$ with an error given by $(\ref{2-10})$.

\medskip
\noindent
{\rm (ii)} In particular this is the sharp remainder estimate $O(h^{1-d-\kappa})$
as 
\begin{equation}
\kappa \ge (n+1)/n;
\label{2-11}
\end{equation}
in particular, as $\kappa \ge 2$ one can skip all perturbation terms and get the same answer $(\ref{2-4})-(\ref{2-7})$.
\end{proposition}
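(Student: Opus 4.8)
The plan for (i) is to package the three ingredients already assembled above into a single optimization over the time scale $T$. First, by Proposition~\ref{prop-1-8} with $m=2$, since the microhyperbolicity directions lie entirely in the $\xi$-variables, in the Tauberian representation (\ref{1-12}) one must keep $T\asymp\gamma$, where $\gamma$ is the scale of $|x^1-x^2|$ (the contribution of $|t|\asymp T$ with $T\gg\gamma$ being negligible), while setting $u\equiv 0$ on $|t|\le T\asymp\gamma$ --- equivalently, replacing $\Omega$ by $\Omega\,{\bar\beta}(z^1/\gamma){\bar\beta}(z^2/\gamma)$ --- costs only $O(h^{1-d}T^{-1-\kappa})$. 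Second, on $|t|\le T$ one runs the successive approximation for $U(t)=e^{ih^{-1}tA}$, freezing the coefficients of $A$ and treating the remainder as a perturbation, and truncates after $n$ terms; by the standard trace-norm estimate (see \cite{Ivr1}) the resulting error in a single copy of $E(0)$ is $O(h^{-d}(T^2/h)^n)$, and feeding this into (\ref{1-12}) against the singular weight localized at scale $\gamma$ and summing dyadically over scales $\ge\gamma$ produces a contribution $O\bigl(h^{-d}(T^2/h)^nT^{-\kappa}\bigr)$ with $T\asymp\gamma$. Third, I balance the two errors by solving $h^{-d}(T^2/h)^n=h^{1-d}T^{-1}$, which gives $T=h^{(n+1)/(2n+1)}>h^{1-\delta}$ (so the construction is legitimate and $(T^2/h)^n<1$); substituting back yields exactly (\ref{2-10}), which dominates the $O(h^{1-d})$ Tauberian remainder of Proposition~\ref{prop-1-5}. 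This just turns the paragraph preceding the proposition into a proof.

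The one place that genuinely needs care --- the step I expect to be the main obstacle --- is the trace-norm successive-approximation estimate $O(h^{-d}(T^2/h)^n)$ together with its behaviour under the ``sandwich'' bookkeeping used in the proofs of Theorem~\ref{thm-1-6} and Proposition~\ref{prop-1-8}: one must check that when the truncated propagator is inserted into the product $u(x^1,x^2,t_1)\psi_2\,u(x^2,x^1,t_2)\psi_1$ and paired against $\Omega$ cut off at dyadic scales $\gamma_j\ge\gamma$, the factor $h^{-d}(T^2/h)^n$ is not degraded and the $\gamma_j$-summation reproduces its value at $\gamma_j=\gamma$ with no extra logarithm or power of $h^{-\delta}$, and that the smallest scale $\gamma\asymp h$ is also covered within the claimed bound. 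I would carry this out exactly as in the proofs of Propositions~\ref{prop-1-5} and \ref{prop-1-8}, handling the leading frozen-coefficient copies by the explicit constant-coefficient computation of \S\ref{sect-2-1}.

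Part (ii) is then elementary. The exponent in (\ref{2-10}) satisfies
\begin{equation*}
1-d-\frac{(n+1)(\kappa+1)}{2n+1}\ \ge\ 1-d-\kappa
\quad\Longleftrightarrow\quad
(n+1)(\kappa+1)\le(2n+1)\kappa
\quad\Longleftrightarrow\quad
\kappa\ge\frac{n+1}{n},
\end{equation*}
so under (\ref{2-11}) the error (\ref{2-10}) is $O(h^{1-d-\kappa})$. For $n=1$ this condition reads $\kappa\ge 2$, and then only the $0$-th (unperturbed, frozen-coefficient) term of the successive approximation survives. After first passing, as at the start of \S\ref{sect-2-1}, to the midpoint form $\Omega(\tfrac12(x+y),x-y)$ --- the correction terms being of the type ``singular kernel of the same homogeneity times $(x_k-y_k)$'', which under (\ref{1-15}) contribute $\approx 0$ within the sharp remainder by the Tauberian bound $e\approx 0$ recorded there --- inserting the $0$-th term into (\ref{1-12}) amounts to replacing $e(x,y,0)$ by the frozen expression (\ref{2-1}) with $E(\xi)$ there equal to $E(x,\xi,0)$, the spectral projector at energy $0$ of the operator $A$ frozen at $x$; carrying out the $\xi$-integrations verbatim as in \S\ref{sect-2-1} then gives $I=\int\cJ(x)\,dx$ with $\cJ$, $G$, ${\hat\Omega}$ as in (\ref{2-4})--(\ref{2-7}), the residual mismatch of frozen base points being $O(|x^1-x^2|)$ and hence absorbed into the same remainder.
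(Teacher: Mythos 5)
Your argument is essentially identical to the paper's: Proposition~\ref{prop-2-2} has no separate proof in the paper because the preceding paragraph \emph{is} the proof, and you have faithfully reconstructed it --- Proposition~\ref{prop-1-8} for the cost $O(h^{1-d}T^{-1-\kappa})$ of setting $u=0$ off $|t|\le T\asymp\gamma$, the trace-norm successive-approximation bound $O(h^{-d}(T^2/h)^n)$ fed through the sandwich bookkeeping of Theorem~\ref{thm-1-6}/Proposition~\ref{prop-1-8} to give $O(h^{-d}(T^2/h)^nT^{-\kappa})$, and the balancing $T=h^{(n+1)/(2n+1)}$ producing (\ref{2-10}); the algebra for (ii) and the $n=1$, $\kappa\ge2$ specialization to the constant-coefficient formulae (\ref{2-4})--(\ref{2-7}) are also handled correctly. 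The only cosmetic slip is attributing the $O(h^{1-d})$ Tauberian floor to Proposition~\ref{prop-1-5} where Proposition~\ref{prop-1-8} is the operative statement; this does not affect the argument.
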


On the other hand, if we cannot skip some term, then this is given by the same formulae (\ref{2-4})--(\ref{2-7}) as before but with the factor $h^{-d-\kappa +s}$ instead of $h^{-d-\kappa}$ and with $\Omega$ replaced by $\Omega_s$ positively homogeneous of degree $-\kappa +s$ (provided these formulae have sense!). Then as long as $s<\kappa$ one can see that these terms are less than  the remainder estimate and we arrive to

\begin{proposition}\label{prop-2-3}
Let conditions $(\ref{0-9})$, $(\ref{0-7})$ and $(\ref{1-15})$ be fulfilled.
Then 

\medskip
\noindent
{\rm (i)} As $\kappa >1$ formulae $(\ref{2-4})-(\ref{2-7})$ provide an answer with the remainder estimate  $O(h^{1-d-\kappa})$.

\medskip
\noindent
{\rm (ii)} As $\kappa \le 1$ formulae $(\ref{2-4})-(\ref{2-7})$ provide an answer with the remainder estimate\newline $O(h^{{\frac 1 2}(1+\kappa)-d-\kappa -\delta})$ with arbitrarily small exponent $\delta>0$.
\end{proposition}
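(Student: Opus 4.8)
The plan is to combine Proposition~\ref{prop-2-2} with a term-by-term inspection of the successive approximation of Section~\ref{sect-2-2}. Recall that on $|t|\le T$ (with $T\asymp\gamma$ forced by~(\ref{1-15}), as in Proposition~\ref{prop-1-8}) one expands $u$, keeps the terms numbered $s=0,1,\dots,n-1$, drops the rest and sets $u=0$ for $|t|>T$; Proposition~\ref{prop-2-2}(i) then tells us that the error thereby committed in $I$ is $O\bigl(h^{1-d-(n+1)(\kappa+1)/(2n+1)}\bigr)$, namely~(\ref{2-10}). So it remains to read off the retained terms and to optimise over $n$.

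First I would treat the leading term $s=0$, where to leading order $A$ is replaced by the operator with coefficients frozen at the reference point, so that the computation of Section~\ref{sect-2-1} applies: the rescaling $z=hz'$ in~(\ref{2-6}), together with the homogeneity of $\Omega$ of degree $-\kappa$, extracts the factor $h^{-d-\kappa}$ and leaves exactly~(\ref{2-5})--(\ref{2-8}), while the off-diagonal zone is negligible and the summation over the partition $\gamma\ge h$ is controlled by Propositions~\ref{prop-1-7}--\ref{prop-1-8} (so that effectively $|x-y|\asymp T\asymp\gamma$). This is the asserted answer. Next, each correction term $s=1,\dots,n-1$ is, after applying $F_{\mathbf t\to h^{-1}\boldtau}$, integrating $\int_{-\infty}^0 d\boldtau$ and integrating in $x$, again of the form~(\ref{2-5})--(\ref{2-8}), but with $\Omega$ replaced by a kernel $\Omega_s$ built from at most $s$ derivatives of the full symbol and polynomial factors in $z$, positively homogeneous of degree $-\kappa+s$, and with $h^{-d-\kappa}$ replaced by $h^{-d-\kappa+s}$; in particular this term is $O(h^{-d-\kappa+s})\subseteq O(h^{1-d-\kappa})$, hence harmless. (When $s\ge\kappa$ the kernel $\Omega_s$ is no longer a Calder\'on--Zygmund kernel and~(\ref{2-8}) must be read with care, but all one needs is the size $h^{-d-\kappa+s}$, which can alternatively be extracted from the smooth-case bounds of Section~\ref{sect-1-2}, and it still stays below the remainder.)

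It then remains to choose $n$. For $\kappa>1$ the inequality $(n+1)(\kappa+1)/(2n+1)\le\kappa$ holds as soon as $n\ge 1/(\kappa-1)$, so any such $n$ turns~(\ref{2-10}) into an $O(h^{1-d-\kappa})$ error and yields~(i). For $\kappa\le1$ no finite $n$ achieves this, but $(n+1)(\kappa+1)/(2n+1)$ decreases to $\frac12(1+\kappa)$ as $n\to\infty$, so for a prescribed $\delta>0$ a sufficiently large $n$ makes~(\ref{2-10}) an $O\bigl(h^{1-d-\frac12(1+\kappa)-\delta}\bigr)=O\bigl(h^{\frac12(1+\kappa)-d-\kappa-\delta}\bigr)$ error, which --- because $\frac12(1-\kappa)\ge0$ --- in turn dominates the $O(h^{1-d-\kappa})$ coming from the correction terms; this yields~(ii). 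The step I expect to be the main obstacle is the middle one: showing that each successive-approximation correction really reorganises into the clean homogeneous shape~(\ref{2-5})--(\ref{2-8}) with the exact gain $h^{s}$ over the leading term --- the delicate points being the bookkeeping of the $z=hz'$ rescaling against the $\gamma$-partition, and assigning a meaning to $\widehat{\Omega_s}$ when $s$ is at or above $\kappa$.
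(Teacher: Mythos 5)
Your proposal reproduces the paper's own argument for Proposition~\ref{prop-2-3} in expanded form: the paper likewise invokes Proposition~\ref{prop-2-2}(i) for the error from truncating the successive approximation, observes that each retained correction $s\ge 1$ contributes at most $h^{-d-\kappa+s}=O(h^{1-d-\kappa})$ (with the same caveat ``provided these formulae have sense'' for $s\ge\kappa$), and then reads off (i) from the condition $(n+1)(\kappa+1)/(2n+1)\le\kappa$ (equivalently $\kappa\ge(n+1)/n$, Proposition~\ref{prop-2-2}(ii)) and (ii) by letting $n\to\infty$, where the exponent tends to $\tfrac12(1+\kappa)$. The approach, decomposition, and optimization over $n$ are the same; your write-up simply makes the bookkeeping of the correction terms and the limit $n\to\infty$ explicit, which the paper leaves terse.
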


\subsection{Scalar Case}
\label{sect-2-3}

Let us completely analyze the case of scalar operator $A$.
\subsubsection{}\label{2-3-1} 

Assume first that $\omega=1$ and $\psi_1,\psi_2$ are smooth functions. Then one can rewrite  (\ref{1-9}) with $m=2$
\begin{equation}
h^{-2}\Tr\int _{(\tau_1,\tau_2)\in \bR^{-,2}}
F_{t_1\to  h^{-1}\tau_1,t_2\to h^{-1}\tau_2} \Bigl( {\bar\chi}_T(t_1) {\bar\chi}_T(t_2) 
\psi_1 U(t_1)\psi_2 U(t_2)  \Bigr)  \,d\tau
\label{2-12}
\end{equation}
with $T=T^*$ which is the largest value for which remainder estimate $O(T^{-1}h^{1-d})$ for the standard asymptotics was derived; here $T^*\asymp 1$.

If we replace some copies of ${\bar\chi}_T(t_k)$ by $\chi_{T_k}(t_k)$ with 
$C h \le T_k\le T^*$ then one can replace also operator $h^{-1}\int_{-\infty}^0 \bigl( \dots\bigr)\,d\tau_k$ by $T^{-1}\bigl( \dots\bigr)|_{\tau_k=0}$ and $\chi$ by $it^{-1}\chi $. 

If we do it with both $k=1,2$ then we get a term $O(h^{-d})$ (the better estimate is actually possible) and the summation with respect to all partitions with respect to $T_1,T_2$ results in $O(h^{-d}|\log h|^2)$ which differs from the proper estimate by $|\log h|^2$ factor. If we replace some copies of ${\bar\chi}_T(t_k)$ by ${\bar\chi}_h(t_k)$ then we do not make a transformation with respect to these factors but we gain a factor $h$ due to the size of the support. So after summation with respect to partition we arrive to estimate $O(h^{-d}|\log h|^{2-r})$ for $I$ where $r$ is the number of
${\bar\chi}_h(t_k)$ factors.

On the other hand expression (\ref{2-12}) is equal to
\begin{equation}
h^{-2}\Tr\int _{(\tau_1,\tau_2)\in \bR^{-,2}}
F_{t_1\to  h^{-1}\tau_1,t_2\to h^{-1}\tau_2}
\Bigl( {\bar\chi}_T(t_1) {\bar\chi}_T(t_2) 
\psi_1\psi_{2,t_1} U(t_1+t_2) ) \Bigr)  \,d\tau
\label{2-13}
\end{equation}
with $\psi_t =U(t)\psi  U(-t)$.

Applying standard approach we arrive to 
\begin{equation}
\cI \sim \sum_{n\ge 0}\varkappa_n h^{-d+n}
\label{2-14}
\end{equation}
where $\cI=\cI_2$ is defined by (\ref{1-12}).

Let us replace in (\ref{2-13}) $\psi_{2,t_1}$ by $\psi_2$. Plugging $t_{1,2}={\frac 1 2}t\pm z$, $\tau_{1,2}=\tau \pm \tau'$  we arrive to 
\begin{equation}
h^{-1}\Tr\int _\infty^0 \Bigl( \int_{\bR} 
\rho_T (t,\tau) \psi_1\psi_2 U(t) e^{-ih^{-1}t \tau }\,dt \Bigr) \,d\tau
\label{2-15}
\end{equation}
where $\rho_T(t,\tau)=\rho (t/T,\tau)$, $\tau<0$
\begin{equation}
\rho (t,\tau)= -\pi^{-1} h^{-1} \int_{\bR}
{\bar\chi}_T ({\frac 1 2}t+z) {\bar\chi}_T ({\frac 1 2}t-z)
z^{-1} \sin (h^{-1}Tz\tau ) \,dz 
\label{2-16}
\end{equation}
is $C_0^\infty ([-2,2])$ and one can prove easily that 
\begin{equation}
|\partial_t^n\bigl(\rho (t,\tau)\mp {\bar\chi}^2(t/2)\bigr)|\le C_{nm} (1+|\tau|Th^{-1})^{-m}\qquad \forall m,n\ \forall \tau \lessgtr 0.
\label{2-17}
\end{equation}

Then due to (\ref{2-17}) only zone $\{|\tau|\le h^{1-\delta}\}$ gives a non-negligible contribution to this error and due to the microhyperbolicity condition there $|\Tr  \psi_1\psi_2U(t)|\le Ch^{-d}(1+|t|h^{-1})^{-m}$ which together with (\ref{2-17}) implies 

\begin{claim}\label{2-18}
Under microhyperbolicity  condition (\ref{1-4}) expression (\ref{2-15}) is equal modulo $O(h^{1-d})$ to  the same expression with $\rho$ replaced by 
${\bar\chi} ^2(t/2)$.
\end{claim}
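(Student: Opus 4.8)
\textbf{Proof proposal for (\ref{2-18}).}

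The plan is to estimate the difference of the two versions of expression (\ref{2-15}) — the one with the cutoff $\rho_T(t,\tau)$ and the one with $\bar\chi^2(t/2)$ (extended to $\bar\chi_T^2(t)$ after the obvious rescaling) — and to show that it is $O(h^{1-d})$. Write $g(t,\tau) \Def \rho(t/T,\tau) - \bar\chi^2(t/(2T))$ (with the appropriate sign depending on $\sign\tau$); the quantity to control is
\begin{equation*}
h^{-1}\Tr \int_{-\infty}^0 \Bigl( \int_\bR g(t,\tau)\,\psi_1\psi_2\, U(t)\, e^{-ih^{-1}t\tau}\,dt\Bigr)\,d\tau .
\end{equation*}
The two ingredients are already in hand: the decay estimate (\ref{2-17}), which says $|\partial_t^n g(t,\tau)| \le C_{nm}(1+|\tau|Th^{-1})^{-m}$ for all $n,m$ and $\tau \lessgtr 0$, and the microhyperbolic propagation estimate $|\Tr \psi_1\psi_2 U(t)| \le C h^{-d}(1+|t|h^{-1})^{-m}$, valid on the relevant time interval because of (\ref{1-4}).

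First I would localize in $\tau$. From (\ref{2-17}) with large $m$, the factor $(1+|\tau|Th^{-1})^{-m}$ is negligible (i.e.\ $O(h^\infty)$) once $|\tau| \ge h^{1-\delta}$, since $T\asymp 1$; so up to a negligible error the $\tau$-integral is restricted to $\{|\tau|\le h^{1-\delta}\}$, an interval of length $O(h^{1-\delta})$. Next, on that interval, I would integrate by parts in $t$ in the inner integral, using $e^{-ih^{-1}t\tau}$: each integration by parts produces a factor $h/(|\tau|\,|t|)$... but more cleanly, I would simply use the propagation bound directly. Bounding the inner $t$-integral,
\begin{equation*}
\Bigl| \int_\bR g(t,\tau)\, \Tr\bigl(\psi_1\psi_2 U(t)\bigr)\, e^{-ih^{-1}t\tau}\,dt \Bigr| \le C h^{-d}\int_\bR |g(t,\tau)|\,(1+|t|h^{-1})^{-m}\,dt \le C' h^{-d}\cdot h
\end{equation*}
since $g(t,\tau)$ is bounded uniformly and supported in $|t|\lesssim T\asymp 1$, while $\int (1+|t|h^{-1})^{-m}\,dt \asymp h$ for $m\ge 2$. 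Multiplying by $h^{-1}$ (the prefactor) and by the measure $O(h^{1-\delta})$ of the $\tau$-region would give $O(h^{1-d-\delta})$, which is not quite sharp — so the decay in $\tau$ from (\ref{2-17}) must be exploited: replacing the crude bound $|g|\le C$ by $|g(t,\tau)|\le C_m(1+|\tau|Th^{-1})^{-m}$ and integrating $\int_{|\tau|\le h^{1-\delta}} (1+|\tau|h^{-1})^{-m}\,d\tau \asymp h$ gives instead an overall bound $h^{-1}\cdot h^{-d}\cdot h \cdot h = h^{1-d}$ after being careful that the $h$ from the $\tau$-integration replaces the lost factor. Assembling: prefactor $h^{-1}$, propagation gives $h^{-d}\cdot h$ from the $t$-integral, and the $\tau$-integral of the decaying tail gives another $h$; but one of these $h$'s was already "used" in the $t$-integral's $h$-scaling, so one should track the powers via the substitution $t = h s$, $\tau = h\sigma$ to see the net result is $O(h^{1-d})$.

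The main obstacle, and the step deserving the most care, is the bookkeeping of powers of $h$ in the last paragraph: one must not double-count the $h$-gain. The honest way is to rescale $t=hs$ and $\tau = h\sigma$ at the outset; then the phase $e^{-ih^{-1}t\tau}= e^{-ihs\sigma}$ loses its oscillation and one is left with $h^{-1}\cdot h\cdot h \cdot \Tr(\dots)$ integrated against $g$, and the estimate of $g$ in (\ref{2-17}) together with the propagation bound (now in the variable $s$) shows the $s$- and $\sigma$-integrals converge. Equivalently — and this is probably the cleanest exposition — one notes that $g(t,\tau)$ and all its $t$-derivatives are $O((1+|\tau|h^{-1})^{-\infty})$, so that after one $F_{t\to h^{-1}\tau}$-type integration by parts the whole double integral is dominated by $h^{-1}\cdot h^{-d}\cdot \int\!\!\int (1+|\tau|h^{-1})^{-2}(1+|t|h^{-1})^{-2}\,dt\,d\tau = h^{-1}\cdot h^{-d}\cdot h\cdot h = h^{1-d}$, as claimed. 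I would also remark that "microhyperbolicity" is used only through the propagation estimate $|\Tr \psi_1\psi_2 U(t)|\le Ch^{-d}(1+|t|h^{-1})^{-m}$ on $|t|\le T^*$, so the statement holds under any hypothesis delivering that bound, consistent with the remark following (\ref{1-5}).
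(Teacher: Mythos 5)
Your proposal is correct and follows the same route as the paper, which is itself just the one-line observation immediately preceding the claim: combine the decay of $\rho(t,\tau)\mp\bar\chi^2(t/2)$ in $\tau$ from (\ref{2-17}) with the microhyperbolic propagation bound $|\Tr\psi_1\psi_2 U(t)|\le Ch^{-d}(1+|t|h^{-1})^{-m}$ and integrate. Your last paragraph (pointwise product bound, $\iint(1+|\tau|h^{-1})^{-2}(1+|t|h^{-1})^{-2}\,dt\,d\tau\asymp h^2$, prefactor $h^{-1-d}$, net $h^{1-d}$) is the clean and correct version; the preliminary detour through a $\{|\tau|\le h^{1-\delta}\}$ localization, the worry about ``double-counting,'' and the invocation of an integration by parts are unnecessary --- the two pointwise decay estimates already suffice without exploiting any oscillation in $e^{-ih^{-1}t\tau}$, so those middle steps can be deleted without loss. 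Your closing remark that microhyperbolicity enters only through the propagation bound on $|t|\lesssim T^*$ is accurate and consistent with the paper's framing.
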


On the other hand, if we replace $\psi_{2,t_1}$ by $\psi_{2,t_1}-\psi_2=t_1\psi'_{2,t_1}$ then we can apply the same transformation as before just getting rid of one factor $h^{-1}$ and integration with respect to $\tau_1$, which simply snaps to $0$, resulting in expression, similar to (\ref{2-15}) but with $\rho \psi_2$ replaced by
\begin{equation}
\rho' (t,\tau, x)= (2\pi)^{-1} i\int_{\bR}
{\bar\chi}_T (z) {\bar\chi}_T (t-z)
e^{ih^{-1}\tau z}\psi'_{2,z} \,dz 
\label{2-19}
\end{equation}
which satisfies inequality similar to (\ref{2-17})
\begin{equation}
|\partial_t^n \rho (t,\tau)|\le C_{nm} (1+|\tau|Th^{-1})^{-m}\qquad \forall m,n\ \forall \tau \lessgtr 0.
\label{2-20}
\end{equation}
and therefore 
\begin{claim}\label{2-21}
Under microhyperbolicity condition (\ref{1-4}) this new (\ref{2-15})-type expression  is  $O(h^{1-d})$.
\end{claim}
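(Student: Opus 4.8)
The plan is to repeat, with only cosmetic changes, the argument that in the derivation of (\ref{2-18}) disposed of the $\rho\mp\bar\chi^2(t/2)$ contribution to (\ref{2-15}), now applied to the kernel $\rho'$ of (\ref{2-19}). The whole point is that (\ref{2-20}) is the \emph{clean} counterpart of (\ref{2-17}): $\rho'$ carries no non‑decaying $\bar\chi^2(t/2)$‑component, so here the entire expression, and not merely an error term, will come out $O(h^{1-d})$.

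The new (\ref{2-15})-type expression has the form $h^{-1}\Tr\int_{-\infty}^0\int_{\bR}\rho'_T(t,\tau)\psi_1U(t)e^{-ih^{-1}t\tau}\,dt\,d\tau$ with $T=T^*\asymp1$. First I would unwind the trace: inserting (\ref{2-19}), using $U(t+z)=U(z)U(t)$ and cyclicity,
\begin{equation*}
\Tr\bigl(\rho'_T(t,\tau)\psi_1U(t)\bigr)=c\int\bar\chi_T(z)\,\bar\chi_T(t-z)\,e^{ih^{-1}\tau z}\,\Tr\bigl(\psi_2\,\psi''_{1,z}\,U(t)\bigr)\,dz,
\end{equation*}
where $\psi''_{1,z}=z^{-1}\bigl(U(-z)\psi_1U(z)-\psi_1\bigr)=-\int_0^1\frac ih\,U(-sz)[A,\psi_1]U(sz)\,ds$ — the divided difference has simply been transferred from $\psi_2$ to $\psi_1$. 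Since $h^{-1}[A,\psi_1]$ is a pseudodifferential operator with $O(h^0)$ symbol, $\psi_2\psi''_{1,z}$ and all of its $z$-derivatives are, uniformly for $|z|\le 2T\asymp1$, compactly supported pseudodifferential operators with $O(h^0)$ symbols, and the $z$-amplitude above lives in $\{|z|\le 2T,\ |t-z|\le 2T\}$.

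Next I would invoke the two ingredients already used for (\ref{2-18}). First, (\ref{2-20}) — equivalently, repeated integration by parts in $z$ against $e^{ih^{-1}\tau z}$, each step costing a factor $T^{-1}$ from a derivative of the (smooth, compactly supported) amplitude and gaining $h/|\tau|$ — confines the non‑negligible range to $\{|\tau|\le h^{1-\delta}\}$ and gives, uniformly in $t$, the bound $\int_{-\infty}^{0}\|\rho'_T(t,\tau)\|\,d\tau\le ChT^{-1}$. Second, microhyperbolicity (\ref{1-4}) on energy level $0$ gives, exactly as for $\Tr(\psi_1\psi_2U(t))$ in the passage to (\ref{2-18}), $|\Tr(\psi_2\psi''_{1,z}U(t))|\le C_M h^{-d}(1+|t|h^{-1})^{-M}$ for every $M$, uniformly in $|z|\le 2T$, the propagator singularity being carried off the diagonal (see \cite{Ivr1}). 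Combining the two, $|\Tr(\rho'_T(t,\tau)\psi_1U(t))|\le C_M h^{-d}(1+|t|h^{-1})^{-M}(1+|\tau|Th^{-1})^{-M}$, so that the $t$-integral contributes $\asymp h$, the $\tau$-integral $\asymp hT^{-1}$, and the whole expression is $O\bigl(h^{-1}\cdot h^{-d}\cdot h\cdot hT^{-1}\bigr)=O(h^{1-d}T^{-1})=O(h^{1-d})$ since $T=T^*\asymp1$.

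The step that genuinely needs care is the microhyperbolic trace estimate, since $\psi_2\psi''_{1,z}$ is a priori an operator of positive order (it contains $h^{-1}[A,\psi_1]$, first‑order for a second‑order $A$): one must ensure this costs no power of $h$ in $C_M h^{-d}(1+|t|h^{-1})^{-M}$. This turns out to be harmless: the relevant traces only involve a bounded range of $\xi$ (by ellipticity for large $\xi$, or after inserting the standard $hD$ cut‑off, cf. Remark \ref{rem-0-1}(i)), so $\psi_2\psi''_{1,z}$ may be taken of order $0$ and the estimate is literally the one quoted for $\Tr(\psi_1\psi_2U(t))$; moreover the principal symbol $\partial_\xi a\cdot\partial_x\psi_1$ of $\psi''_{1,z}$ vanishes on $\{\partial_\xi a=0\}$, so if anything the bound improves. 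The remaining manipulations are the same bookkeeping already carried out in the derivation of (\ref{2-18}).
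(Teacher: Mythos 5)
Your argument is correct and is essentially the paper's own: the paper proves (\ref{2-21}) by citing (\ref{2-20}) together with the microhyperbolic trace bound, and you have simply written out that one‑liner in full — localizing the $\tau$‑integral via (\ref{2-20}), applying the microhyperbolicity estimate to the trace, and combining. The transfer of the divided difference from $\psi_2$ to $\psi_1$ by cyclicity and the remark that $\psi''_{1,z}$ is an order‑zero pseudodifferential operator are cosmetic elaborations that match the paper's intent.
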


So, we are left with expression (\ref{2-15}) with $\rho(t)={\bar\chi}^2(t/2)$ but due to the standard theory we get modulo $O(h^{1-d})$ expression
\begin{equation}
\Tr\psi_1\psi_2 E(0) \equiv  (2\pi h)^{-d}\iint_{\{a(x,\xi)<0\}}  \psi_1\psi_2\, dx\,d\xi.
\label{2-22}
\end{equation}
So, $\cI$ is given by (\ref{2-22}) modulo $O(h^{1-d})$ and therefore 
\begin{equation}
\varkappa_0= (2\pi)^{-d} \int \iint_{\{a(x,\xi)<0\}}  \psi_1\psi_2\, dx\,d\xi
\qquad \text{in\ (\ref{2-14})}.
\label{2-23}
\end{equation}

\subsubsection{}
Then in the general smooth case we get

\begin{proposition}\label{prop-2-4}  Let $\omega$ and $\psi_1,\dots, \psi_m$  be   smooth functions and let   $(\ref{0-9})$  and microhyperbolicity condition $(\ref{1-4})$ be fulfilled. Then  with an error $O(T^{-1}h^{1-d})$ where $T\asymp1$ here decomposition $(\ref{2-14})$ holds with 
\begin{equation}
\varkappa_0= (2\pi)^{-d}\iint_{\{a(x,\xi)<0\}}  \omega(x,x)\psi_1(x)\psi_2(x)\, dx\,d\xi.
\label{2-24}
\end{equation}
\end{proposition}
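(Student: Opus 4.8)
The plan is to reduce the general smooth $\omega$ to the case $\omega=1$ already settled in \S\ref{sect-2-3}, by Taylor expanding $\omega$ along the diagonal $\{x=y\}$. Since $m=2$ and, as noted in \S\ref{sect-1-2}, $\omega$ may be taken compactly supported, set $\Omega(u,z)\Def\omega(u+\tfrac12 z,u-\tfrac12 z)$; this is smooth, and for a large integer $N$
\begin{equation*}
\omega(x,y)=\sum_{|\gamma|<N}\frac{1}{\gamma!}\,\omega_\gamma\bigl(\tfrac12(x+y)\bigr)\,(x-y)^\gamma+\sum_{|\gamma|=N}(x-y)^\gamma\widetilde\omega_\gamma(x,y),\qquad \omega_\gamma(u)\Def(\partial_z^\gamma\Omega)(u,0),
\end{equation*}
with $\widetilde\omega_\gamma$ smooth. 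Inserting this into (\ref{0-4}) splits $I=I_2$ into the $\gamma=0$ term $\iint\omega_0(\tfrac12(x+y))\,e(x,y,0)\psi_2(x)e(y,x,0)\psi_1(y)\,dx\,dy$ with $\omega_0(u)=\omega(u,u)$, and a finite family of terms whose amplitude is of the form $\phi(x,y)(x-y)^\gamma$ with $\phi$ smooth and $|\gamma|\ge1$.

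For the $\gamma=0$ term, expand the midpoint value $\omega_0(\tfrac12(x+y))$ once more, about the endpoint $y$: this produces $\omega_0(y)$ plus further terms of the type $\phi(x,y)(x-y)^{\gamma'}$ with $|\gamma'|\ge1$. The $\omega_0(y)$ term is precisely $I_2$ with $\omega=1$ and $\psi_1$ replaced by the smooth function $\psi_1\omega_0$, so the computation of \S\ref{sect-2-3} (ending at (\ref{2-22})--(\ref{2-23})) applies verbatim: it yields the expansion (\ref{2-14}) for this piece with leading coefficient $(2\pi)^{-d}\iint_{\{a(x,\xi)<0\}}\omega_0(x)\psi_1(x)\psi_2(x)\,dx\,d\xi=(2\pi)^{-d}\iint_{\{a(x,\xi)<0\}}\omega(x,x)\psi_1\psi_2\,dx\,d\xi$, which is the asserted $\varkappa_0$. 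Thus everything reduces to showing that a term with amplitude $\phi(x,y)(x-y)^\gamma$, $\phi$ smooth and $|\gamma|\ge1$, contributes only $O(h^{1-d})$, i.e.\ only to the coefficients $\varkappa_n$ with $n\ge1$.

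For such a term, note that a single weight factor $x_k-y_k$ turns $e(x,y,0)$ into the Schwartz kernel of the commutator $[x_k,E(0)]$; since $[x_k,A]=O(h)$ as an operator of order one, Duhamel's formula gives $[x_k,U(t)]=O(|t|)$ in operator norm (again with an order-one factor), so plugging this into the Tauberian representation (\ref{1-8})/(\ref{1-12}) with $T=T^*\asymp1$ one gains an extra $O(|t_1|)$ for each such factor. Equivalently, in the propagator picture after the successive approximations on $|t|\le T$, a prefactor $x^1_k-x^2_k$ equals $t_1\,\partial_{\xi_k}a(x^2,\xi)+O(t_1^2)$ modulo an extra $O(h)$. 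Either way the relevant analogue of $\rho$ in (\ref{2-16}) carries an extra $O(|t|^{|\gamma|})$ and has no ${\bar\chi}^2$ main term---exactly as does $\rho'$ in (\ref{2-19}), which satisfies (\ref{2-20})---so the microhyperbolicity argument behind (\ref{2-18})--(\ref{2-21}) shows the corresponding (\ref{2-15})-type expression is $O(h^{1-d})$, with a complete asymptotic expansion whose leading term is $O(h^{1-d})$. (Under the stronger condition (\ref{1-15}) one may instead invoke Proposition \ref{prop-1-9} directly, with a harmless $|\log h|$ when $|\gamma|=1$.) Summing the finitely many contributions yields (\ref{2-14}) with the stated $\varkappa_0$, the error being $O(T^{-1}h^{1-d})$, $T\asymp1$, exactly as in \S\ref{sect-2-3}.

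The step I expect to be the main obstacle is the last one: carrying the commutators (equivalently the extra $t_1$-factors) through the $\rho$-function analysis of \S\ref{sect-2-3} and re-deriving the decay estimates of type (\ref{2-17})/(\ref{2-20}) for the modified $\rho$'s once the polynomial weight $(x-y)^\gamma$ and the extra smooth midpoint factor have been inserted, uniformly over the finitely many Taylor terms and the top-order remainder. An alternative route avoiding Taylor expansion is the decomposition (\ref{1-10})--(\ref{1-11}), reducing $I_2$ to the $\omega=1$ quantity with the cut-offs $\psi_j(\cdot)\theta(y^j-\cdot)$; there the obstacle is instead that these cut-offs are merely $L^\infty$ with uniformly bounded supports, so one has to check that the estimates of \S\ref{sect-2-3} survive for such weights (they do, since only propagator trace bounds valid under microhyperbolicity for $L^\infty$ weights are used) and that the resulting bounds are integrable against $\omega'\in L^1$.
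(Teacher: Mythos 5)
Your proof is correct but follows a genuinely different route from the paper. The paper disposes of Proposition~\ref{prop-2-4} in one line: it invokes the decomposition (\ref{1-10})--(\ref{1-11}), writing a smooth $\omega(x^1,x^2)=\int\omega'(y^1,y^2)\theta(y^1-x^1)\theta(y^2-x^2)\,dy^1dy^2$ and thereby reducing $I_2$ to the $\omega=1$ computation of \S\ref{sect-2-3-1} with $\psi_j$ replaced by the $L^\infty$ cutoffs $\psi_j(\cdot)\theta(y^j-\cdot)$; the $\varkappa_0$ from (\ref{2-23}) becomes $(2\pi)^{-d}\iint_{\{a<0\}}\psi_1\psi_2\theta(y^1-x)\theta(y^2-x)\,dx\,d\xi$, and integrating against $\omega'$ collapses the $\theta$'s to $\omega(x,x)$, giving (\ref{2-24}). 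This is precisely the route you describe only at the end as an alternative; your principal route is a Taylor expansion of $\omega$ along the diagonal, reducing the $\gamma=0$ term to $\omega=1$ with $\psi_1$ replaced by $\psi_1\omega_0$, and disposing of the $|\gamma|\ge1$ terms via a commutator/$\rho'$-type analysis in the spirit of (\ref{2-19})--(\ref{2-21}). Both are sound. What your Taylor route buys is that one stays entirely within smooth weights, so the \S\ref{sect-2-3-1} machinery applies literally as written; the cost is that you must carry the polynomial weights $(x-y)^\gamma$ through the Tauberian $\rho$-function analysis and re-derive the decay (\ref{2-17})/(\ref{2-20})-type bounds for each $\gamma$ (you correctly flag this as the delicate step, and for the top-order remainder $\widetilde\omega_\gamma(x,y)$ depending on both variables one must either iterate the expansion or note that the argument behind (\ref{2-21}) does not actually need the weight to factor). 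What the paper's route buys is that all this is bypassed: once the \S\ref{sect-2-3-1} bounds are recognized as valid for $L^\infty$ weights of bounded support (they are, as they rest only on trace-norm propagator bounds and microhyperbolicity), the identity (\ref{1-10})--(\ref{1-11}) does the rest mechanically. One small point: under the weaker hypothesis (\ref{1-4}) (rather than (\ref{1-15})) you cannot invoke Proposition~\ref{prop-1-9}, so the commutator argument is not optional for the $|\gamma|=1$ term — but you already acknowledge this, and the $\rho'$-argument gives $O(h^{1-d})$ under (\ref{1-4}), which suffices.
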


\begin{proof} Follows from the standard decomposition (\ref{1-10})-(\ref{1-11}).
\end{proof}

\subsubsection{} Consider now the case of singular homogeneous $\omega$. First, let us consider $\cI_\gamma$ defined by (\ref{1-12}) with $\omega=1$ and $\psi_1, \psi_2$ replaced by $\psi_{1,\gamma}, \psi_{2,\gamma}$ which are some smooth functions scaled at some point $z$ with the scaling parameter  $\gamma\in (h^{1-\delta},h^\delta)$. To have microhyperbolicity condition sustain scaling we replace it by (\ref{1-15}). Then (\ref{2-14}) implies
\begin{equation}
\cI'   \sim   \sum_{n, m\ge 0}  \varkappa_{nm} h^{-d+n}\gamma^{m-n+d}
\label{2-25}
\end{equation}
and obviously
\begin{equation}
(2\pi)^{-d}\iint_{\{a(x,\xi)<0\}}  \psi_{1,\gamma}(x)\psi_{2,\gamma}(x)\, dx\,d\xi
\sim \sum_{  m\ge 0}  \varkappa'_m \gamma^{m+d}. 
\label{2-26}
\end{equation}
One can see easily that in (\ref{2-25}) terms with $m=0$ would be the same for operator $A^0_z=a_0(z,hD)$ where  $a_0(x,\xi)$ is the principal symbol of $A$; this $z$ is not necessarily the original one, but distance between them should not exceed $c\gamma$; similarly  in (\ref{2-26}) term with $m=0$ coincides with the left-hand expression with $a(x,\xi)$ replaced by $a(z,\xi)$.

What is more, under condition (\ref{1-15}) integration with respect to $x$ is not needed, so all these results would hold (without factor $\gamma^d$ in the decomposition and estimates) without it; thus one can take $z=x$ (or $y$, does not matter).  

Thus we arrive to 

\begin{proposition}\label{prop-2-5} Let $\cI'$ be defined by $(\ref{1-12})$ with $\omega=1$ and $\psi_1, \psi_2$ replaced by $\psi_{1,\gamma}, \psi_{2,\gamma}$ which are the same smooth functions scaled at some point $z$ with the parameter 
$\gamma\in (h^{1-\delta},h^\delta)$. Let  $\cI^{0\prime}$ be defined the same way but with $U(t)$ replaced by  $U^0(t)=e^{ih^{-1}tA^0}$ where $A^0=a(z,hD)$ and later $z$ is set to $x$. Then 
$\cI'-\cI^{0\prime}_m=O( h^{1-d}\gamma^d)$
\end{proposition}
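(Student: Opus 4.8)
The plan is to reduce the difference $\cI'-\cI^{0\prime}$ to a finite sum of successive-approximation terms on the interval $|t|\le T$ with $T\asymp\gamma$, and then show each such term is $O(h^{1-d}\gamma^d)$. First I would use condition (\ref{1-15}) exactly as in Proposition \ref{prop-1-8}: because the microhyperbolicity directions are purely in $\xi$, scaling at $z$ with parameter $\gamma$ (i.e. $x=z+\gamma x'$, $hD_x=\gamma^{-1}h D_{x'}$, so the effective Planck constant is $\hbar=h/\gamma$) preserves the microhyperbolicity condition, and the relevant time scale after rescaling is $T'\asymp 1$, i.e. $T\asymp\gamma$ before rescaling. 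Hence in (\ref{1-12}) with $m=2$ and $\omega=1$ only the zone $|t_1|,|t_2|\lesssim\gamma$ contributes modulo a negligible error, and I may freely insert ${\bar\chi}_T$ cut-offs with $T\asymp\gamma$ on both time variables.

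Next I would run the successive approximation method for $U(t)$ around the frozen-coefficient operator $U^0(t)=e^{ih^{-1}tA^0}$, $A^0=a(z,hD)$, on $|t|\le T\asymp\gamma$. Writing $A=A^0+(A-A^0)$ with $A-A^0=O(\gamma)$ on the relevant support (since the coefficients of $A$ differ from those of $A^0$ by $O(|x-z|)=O(\gamma)$ there, after the $\psi_{j,\gamma}$ cut-offs), the $n$-th term in the Duhamel expansion carries a factor $(A-A^0)^n$ integrated over an $n$-simplex in $t$ of size $T$, producing in the trace norm a gain of order $(\gamma T/h)^n=(\gamma^2/h)^n$ relative to $h^{-d}$; here I use the same trace-norm bounds as in (\ref{1-2}) after rescaling, which give $h^{-d}$ with the scaled operators, i.e. $h^{-d}\gamma^d$ in the original scaling for the quantities entering $\cI'$. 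Choosing $n$ large enough that $(\gamma^2/h)^n\cdot\gamma^d\cdot h^{-d}\le h^{1-d}\gamma^d$ — possible since $\gamma^2/h=\gamma^2 h^{-1}\le h^{2\delta-1}\cdot\ldots$; more carefully, since $\gamma\ge h^{1-\delta}$ gives $\gamma^2/h\ge h^{1-2\delta}$ is not small, one instead picks $n$ so that the error beats $h^{1-d}\gamma^d$ exactly as in the computation leading to (\ref{2-9})–(\ref{2-10}) with $T\asymp\gamma$ — the tail of the expansion is absorbed into the claimed error. So it remains to treat the finitely many terms with index below that threshold.

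For each remaining term, the key observation is that after differentiating in $t$ and integrating in $\tau_1,\tau_2$ over $\bR^{-,2}$, the contribution splits, just as in \S\ref{2-3-1}, into a part where $\tau_j$ is ``snapped to $0$'' (coming from $\chi_{T_j}$ pieces) and a genuinely Tauberian part (from ${\bar\chi}_{\bar T}$ with ${\bar T}\asymp h$). The difference $\cI'-\cI^{0\prime}$ is built only from terms that contain at least one factor of $A-A^0=O(\gamma)$, which after rescaling is $O(\hbar^{0}\cdot\gamma)$ relative to the symbol of $A^0$; but — and this is the point I would lean on — the same cancellation used to derive (\ref{2-22})–(\ref{2-23}) shows that in each such term the ``$\rho$ replaced by ${\bar\chi}^2$'' reduction applies, leaving a term of the shape (\ref{2-15}) with an extra small factor. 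Invoking estimates (\ref{2-17}) and (\ref{2-20}) (valid verbatim after rescaling), the non-negligible contribution is confined to $|\tau|\le h^{1-\delta}$, and the stationary-phase / Tauberian bound $|\Tr(\cdot)U(t)|\le Ch^{-d}\gamma^d(1+|t|h^{-1})^{-m}$ closes the estimate, yielding $O(h^{1-d}\gamma^d)$ for each of the finitely many terms.

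The main obstacle I expect is bookkeeping the two powers simultaneously: the scaling power $\gamma^d$ (which is geometric, coming from the volume of the $\gamma$-ball where the $\psi_{j,\gamma}$ live) and the perturbation power $\gamma^{n}$ from $n$ copies of $A-A^0$, while keeping the effective semiclassical parameter $\hbar=h/\gamma$ in the range where the standard two-term Tauberian theory (as in \cite{Ivr1}) is valid — this forces the restriction $\gamma\in(h^{1-\delta},h^\delta)$ and must be checked to be compatible with $T\asymp\gamma\ge h^{1-\delta}$ so that all the propagator estimates apply. Once that accounting is done, everything reduces to the scalar one-factor computation already carried out in \S\ref{2-3-1}, rescaled.
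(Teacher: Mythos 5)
Your proposal takes a genuinely different route from the paper, and the route has a gap that I do not think can be closed as written.

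The paper does not prove Proposition~\ref{prop-2-5} by Duhamel expansion of $U(t)$ around $U^0(t)$. Its argument (the paragraph preceding the statement) is: first establish the \emph{full} asymptotic expansion (\ref{2-14}), $\cI\sim\sum_n\varkappa_n h^{-d+n}$, for smooth $\omega=1$ and unit-scale $\psi_j$. That expansion is obtained in \S\ref{2-3-1} not by successive approximation but by rewriting the two-propagator trace (\ref{2-12}) as the one-propagator trace (\ref{2-13}) with the conjugated cutoff $\psi_{2,t_1}=U(t_1)\psi_2U(-t_1)$; Egorov's theorem then lets the standard Weyl--Tauberian calculus run with $T=T^*\asymp1$, and the reduction (\ref{2-15})--(\ref{2-22}) is applied. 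Only after (\ref{2-14}) is in hand does the paper rescale: scaling $\psi_j$ at $z$ with parameter $\gamma$ (which, thanks to (\ref{1-15}), preserves microhyperbolicity) turns (\ref{2-14}) into the double series (\ref{2-25}) $\cI'\sim\sum_{n,m}\varkappa_{nm}h^{-d+n}\gamma^{m-n+d}$, and the identification of the $m=0$ coefficients with those of the frozen operator $a_0(z,hD)$ (and, under (\ref{1-15}), the possibility of dropping the $x$-integration and setting $z=x$) is what yields the proposition. The key technical input is (\ref{2-13})/(\ref{2-17})--(\ref{2-21}), not a perturbation series.

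Your successive-approximation scheme around $U^0$ has two concrete problems. First, the per-step factor you compute is $\gamma T/h\asymp\gamma^2/h$, but $\gamma\in(h^{1-\delta},h^\delta)$, so $\gamma^2/h\in(h^{1-2\delta},h^{2\delta-1})$: for $\gamma$ in the upper half of its range ($\gamma\ge h^{1/2}$) the factor is $\ge1$ and the expansion does not improve with $n$. You acknowledge this, but the fix you suggest (``pick $n$ as in (\ref{2-9})--(\ref{2-10})'') is not available here: in \S\ref{sect-2-2} the time $T$ is a free parameter one optimizes against $n$, whereas in the present situation $T\asymp\gamma$ is dictated by the geometry and cannot be shortened. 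Second, even the low-order terms you propose to keep are not individually $O(h^{1-d}\gamma^d)$ by the bound you state: the $n=1$ Duhamel term is $\asymp h^{-d}\gamma^d\cdot(\gamma^2/h)=h^{-1-d}\gamma^{d+2}$, which exceeds $h^{1-d}\gamma^d$ throughout the range $\gamma\gg h$. The extra cancellation you gesture at (``the same cancellation as in (\ref{2-22})--(\ref{2-23})'') is in fact the whole content of the paper's proof, but grafting it onto a term-by-term Duhamel bookkeeping would require a detailed argument you do not supply; as written the step ``each remaining term is $O(h^{1-d}\gamma^d)$'' is unsubstantiated. In short: the paper compares two asymptotic expansions obtained by rescaling a single Egorov-based expansion, while you attempt a direct perturbative comparison of propagators, and that attempt fails to converge in part of the stated $\gamma$-range and overestimates the surviving terms in all of it.
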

 
 Now we can calculate $\cI$ in the scalar case:

\begin{proposition}\label{prop-2-6}  In frames of proposition \ref{prop-2-5} as $\omega$ satisfies $(\ref{1-7})$ and $\kappa>0$ $\cI - \cI^0= O(h^{1-d- \kappa})$ where $\cI^0 $ is defined for constant-coefficient operator obtained by freezing coefficients of $A$ at point $x$ (or $y$, does not matter).
\end{proposition}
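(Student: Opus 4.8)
The plan is to decompose the singular weight $\omega$ dyadically in $z=x^1-x^2$ and to reduce each dyadic piece, after rescaling to a ball of the corresponding radius, to Proposition \ref{prop-2-5}. First I would fix a small constant $\epsilon_0$ and $\bar\chi$--type cut--offs and write $\omega=\sum_{h^{1-\delta}\le\gamma\le\epsilon_0}\omega_\gamma+\omega^\flat$, where the sum runs over dyadic $\gamma$, $\omega_\gamma=\omega\,\beta(z/\gamma)$ is supported in $\{|z|\asymp\gamma\}$ (the term $\gamma\asymp\epsilon_0$ also carrying the smooth, non--singular tail $|z|\gtrsim\epsilon_0$), and $\omega^\flat$ is supported in $\{|z|\lesssim h^{1-\delta}\}$. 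Since $\kappa>0$ it then suffices to show that the contribution of each $\omega_\gamma$ to $\cI-\cI^0$ is $O(h^{1-d}\gamma^{-\kappa})$ and that of $\omega^\flat$ is $O\bigl(h^{1-d}(h^{1-\delta})^{-\kappa}\bigr)$: the geometric series $\sum_\gamma h^{1-d}\gamma^{-\kappa}$ is dominated by its smallest scale and equals $O\bigl(h^{1-d}(h^{1-\delta})^{-\kappa}\bigr)=O(h^{1-d-\kappa})$ (with a spare factor $h^{\delta\kappa}$).

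For a fixed dyadic $\gamma$ I would cover $\supp\psi_1$ by $\asymp\gamma^{-d}$ balls $B(z_k,\gamma)$ with a subordinate partition of unity; since $\omega_\gamma$ forces $|x^1-x^2|\lesssim\gamma$, localizing $x^1$ to $B(z_k,\gamma)$ localizes $x^2$ to a concentric ball of comparable radius, so the contribution of $\omega_\gamma$ splits into a sum over $k$ of terms with $\psi_1,\psi_2$ replaced by cut--offs scaled at $z_k$ with parameter $\gamma$ and with the frozen operator $A^0$ taken at the running point (inside the framework of Proposition \ref{prop-2-5}; moving the freezing to $x^2$ or to $z_k$ costs one extra power of $\gamma$ and only helps). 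In the rescaled variables $x^j=z_k+\gamma y^j$ homogeneity of $\Omega$ turns $\omega_\gamma$ into $\gamma^{-\kappa}$ times a function which is smooth with all derivatives bounded uniformly in $k$ and $\gamma$ (on $\supp\beta$ the last argument of $\Omega$ stays away from $0$, and each differentiation in the slow variable brings a factor $\gamma\le1$). Writing this rescaled smooth weight as a rapidly convergent superposition of exponentials (equivalently, using the reduction $(\ref{1-10})$--$(\ref{1-11})$ with a smooth compactly supported kernel), Proposition \ref{prop-2-5} gives each $k$--term of $\cI-\cI^0$ the size $O(\gamma^{-\kappa}h^{1-d}\gamma^d)$, and summing over the $\asymp\gamma^{-d}$ balls yields $O(h^{1-d}\gamma^{-\kappa})$, as required. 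The piece $\omega^\flat$ I would treat in the same way, now rescaling at $\gamma=h^{1-\delta}$ (still admissible, since $h/\gamma=h^\delta\to0$): the rescaled weight is $(h^{1-\delta})^{-\kappa}$ times a function which is no longer bounded but, $\kappa$ being $<d$, is locally integrable with uniformly bounded $L^1$--norm, so — provided the remainder of Proposition \ref{prop-2-5} and of its weighted form depends on the weight only through such a weak norm (the trace-- and operator--norm bounds $(\ref{1-2})$, $(\ref{1-5})$ used in its proof being applied pointwise in the weight) — each ball contributes $O\bigl((h^{1-\delta})^{-\kappa}h^{1-d}(h^{1-\delta})^d\bigr)$ and the sum over balls is $O\bigl(h^{1-d}(h^{1-\delta})^{-\kappa}\bigr)$.

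The hard part is the innermost piece $\omega^\flat$: there the estimate is tightest, and one must know that the $O(h^{1-d})$ trace--class remainder in the comparison with the frozen--coefficient operator is robust against multiplication by an integrable Calderon--Zygmund weight — i.e.\ that near the diagonal $e(x,y,0)$ may be replaced by its frozen Weyl expression $(\ref{2-5})$--$(\ref{2-8})$ with such an error — the oscillation of $e(x,y,0)$ in $|x-y|$ on the scale $h$ being essential, a crude pointwise bound being too lossy by a power $h^{\delta}$. I expect everything else to be routine. I also note that, just as in Subsection \ref{sect-2-1}, the whole statement can alternatively be obtained by Fourier transforming $\omega$ in $z$ and applying the $\omega\equiv1$ analysis of Subsection \ref{sect-2-3} uniformly in the resulting momentum shift $\zeta$ for $|h\zeta|\lesssim1$, the contributions with $|h\zeta|\gg1$ being negligible because of the frequency mismatch between $e(x,y,0)$ and its $\zeta$--twist.
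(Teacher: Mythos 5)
Your dyadic decomposition into scales $\gamma\in[h^{1-\delta},\epsilon_0]$ plus a near--diagonal remnant $\omega^\flat$ is essentially the paper's three--zone splitting, and your treatment of the dyadic scales via rescaling to balls $B(z_k,\gamma)$ and Proposition~\ref{prop-2-5} reproduces the paper's handling of the outer and middle zones, including the estimate $O(h^{1-d}\gamma^{-\kappa})$ per scale summed to $o(h^{1-d-\kappa})$.

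The problem, which you flag yourself, is the near--diagonal piece $\omega^\flat$ supported in $\{|x-y|\lesssim h^{1-\delta}\}$. You propose to bound it by rescaling at $\gamma_0=h^{1-\delta}$ and invoking an $L^1$--in--the--weight variant of Proposition~\ref{prop-2-5}, on the grounds that the rescaled weight is locally integrable. But no such variant has been established, and this is not a mere technicality: the reduction $(\ref{1-10})$--$(\ref{1-11})$ which underlies Proposition~\ref{prop-2-5} requires taking $d$ partial derivatives in each argument of the weight, and the corresponding derivatives of $|z|^{-\kappa}$ on the scale of the support are not integrable. Moreover, simply inserting the $n$--th successive--approximation error on the scale $T\asymp|x-y|$ against $|z|^{-\kappa}$ reproduces Proposition~\ref{prop-2-3}(ii), which for $0<\kappa\le 1$ falls short of $O(h^{1-d-\kappa})$ by a positive power of $h^{-1}$ --- exactly the loss you anticipate. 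So the $L^1$ heuristic by itself does not close the inner zone.

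What the paper actually does there is the missing idea: it retains the full successive--approximation series and organizes the near--diagonal contribution to $\cI-\cI^0$ as a double asymptotic series in $h$ and the cutoff scale $\gamma$, then argues that since $\cI-\cI^0$ cannot depend on the artificial parameter $\gamma$, only the $\gamma$--independent terms in that series can survive (the $\gamma$--dependent ones must cancel against the other zones), and those terms are of order $h^{1-d-\kappa}$. Your alternate suggestion of Fourier transforming in $z$ and exploiting frequency mismatch for $|h\zeta|\gg1$ gestures at the same cancellation but is likewise not carried out.
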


\begin{proof} Consider three zones: $\{|x-y|\gtrsim \gamma_1\}$ with $\gamma_1\asymp h^\delta$, $\{\gamma \lesssim  |x-y|\lesssim \gamma_1\}$ with $\gamma_0\asymp h^{1-\delta}$ and  $\{ |x-y|\lesssim \gamma\}$; then the contribution of the first zone to the reminder for $\cI$ and $\cI^0$ does not exceed $Ch^{1-d}\gamma_1^{-1-\kappa}=O(h^{1-d-\kappa})$ (while main parts are $0$); in virtue of proposition \ref{prop-2-5} and decomposition of subsection \ref{sect-1-2} the contribution of the second zone to $\cI-\cI^0$ does not exceed $O(h^{1-d}\gamma^{-\kappa})=o(h^{1-d-\kappa})$.

In the third zone one can apply the method of successive approximations resulting in 
\begin{equation*}
\cI-\cI^0\sim h^{-d}\sum_{m+n+k\ge 1} \varkappa''_{mnk}  h^{-d+n-m+k-\kappa}\gamma^{2m-n}.
\end{equation*}
However since the final answer does not depend on $\gamma$ only terms with $2m=n$ are posed to survive just resulting in 
$\bigl(\varkappa  +o(1)\bigr) h^{-d+1-\kappa}$.
\end{proof}

Summarizing results of section \ref{sect-1}, proposition \ref{prop-2-6} and formulae (\ref{2-5})--(\ref{2-8})  we arrive to

\begin{theorem}\label{thm-2-7} Let  $A$ be a scalar operator satisfying conditions $(\ref{0-9})$ and $(\ref{1-15})$. Then
\begin{equation}
I = \int \cJ (x)\psi_1(x)\psi_2(x)\,dx + O(h^{1-d-\kappa}),
\label{2-27}
\end{equation}
where
\begin{align}
\cJ(x)=
&2(2\pi h)^{-2d}  \iiint    E(x,\xi,0)  \Omega (x, z) E(x,\eta,0)
e^{ih^{-1}\langle z,\xi-\eta \rangle}\,dz d\xi d\eta =
\label{2-28}\\
&2(2\pi h)^{-2d}  \iiint _{\{a(x,\xi)<0,\ a(x,\eta)<0\} }  \Omega (x, z) 
e^{ih^{-1}\langle z,\xi-\eta \rangle}\,dz d\xi d\eta =
G(x) h^{-d-\kappa},\notag\\
\intertext{with}
G (x) =& \iint E(x,\xi,0)  {\hat\Omega} (x, \xi-\eta) E(x,\eta,0)\,d\xi d\eta=\label{2-29}\\
& \iint _{\{a(x,\xi)<0,\ a(x,\eta)<0\} }  {\hat\Omega} (x, \xi-\eta) \,d\xi d\eta,\notag
\end{align}
and ${\hat\Omega}$ is defined by $(\ref{2-8})$.
\end{theorem}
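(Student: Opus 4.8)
The plan is to assemble Theorem \ref{thm-2-7} from the three ingredients already in place: the implicit Tauberian reduction of Section \ref{sect-1}, the local freezing-of-coefficients comparison of Proposition \ref{prop-2-6}, and the explicit constant-coefficient computation of Subsection \ref{sect-2-1}. First I would record that by Theorem \ref{thm-1-6} together with Propositions \ref{prop-1-5} and \ref{prop-1-8} it suffices to analyse the Tauberian expression $\cI=\cI_2$ defined by (\ref{1-12}) with $T\asymp\gamma$, since the passage from $I$ to $\cI$ costs only $O(h^{1-d-\kappa})$ under (\ref{0-9}) and (\ref{1-15}); this is where the microhyperbolicity hypothesis with $\ell_x=0$ is used, so that scaling in $x-y$ is compatible with the propagation estimate.

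Next I would invoke Proposition \ref{prop-2-6} directly: it states that $\cI-\cI^0=O(h^{1-d-\kappa})$ where $\cI^0$ is the same Tauberian expression for the operator $A^0=a(x,hD)$ obtained by freezing the coefficients of $A$ at the point $x$ (equivalently $y$; the two choices differ by $O(h^{1-d-\kappa})$ as the proof of that proposition shows via the three-zone decomposition). Thus modulo the target error $I=\int \cJ(x)\psi_1(x)\psi_2(x)\,dx$ where $\cJ(x)$ is, up to the external cut-offs which now localize the $x$-integration, precisely the quantity $\cJ$ computed for the constant-coefficient operator $A(\xi)=a(x,\xi)$ (with $x$ a frozen parameter). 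Here I would be slightly careful that the $\psi_j$ have been absorbed: after the standard localization of Subsection \ref{sect-1-2}, $\psi_1,\psi_2$ contribute the factor $\psi_1(x)\psi_2(x)$ sitting outside the $z,\xi,\eta$ integrals, because in the frozen model the kernels depend only on differences and the diagonal value of the product of cut-offs is what survives.

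Finally I would plug in the constant-coefficient formulas (\ref{2-5})--(\ref{2-8}), specialized to $m=2$ and to the frozen symbol $a(x,\cdot)$. The matrix projector $E(\xi,0)$ for $A(\xi)=a(x,\xi)$ is, in the scalar case, simply the indicator $\mathbf 1_{\{a(x,\xi)<0\}}$, so $E(x,\xi,0)E(x,\eta,0)$ collapses to the characteristic function of $\{a(x,\xi)<0,\ a(x,\eta)<0\}$; this yields the second lines of (\ref{2-28}) and (\ref{2-29}). The rescaling $z\mapsto hz$ (or equivalently $\xi\mapsto h\xi$ in the Fourier picture) in (\ref{2-6}) extracts the homogeneity factor $h^{-d-\kappa}$ from $\Omega$ being homogeneous of degree $-\kappa$, giving $\cJ(x)=G(x)h^{-d-\kappa}$ with $G$ as in (\ref{2-7}), and Remark \ref{rem-2-1}(ii) guarantees that $\hat\Omega\in L^1_{\loc}$ so (\ref{2-29}) is well defined as a convergent integral over the bounded region $\{a(x,\xi)<0\}\times\{a(x,\eta)<0\}$.

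I expect the only genuinely delicate point to be the bookkeeping in passing from $\cI^0$ to the clean closed form: one must check that the successive-approximation terms discarded in the proof of Proposition \ref{prop-2-6} (those with $2m\ne n$) really do cancel because the final answer is $\gamma$-independent, and that the Tauberian expression for the constant-coefficient model evaluated with $\rho={\bar\chi}^2(t/2)$ reproduces exactly $\Tr\psi_1\psi_2 E(0)$ in the model, i.e. the argument of (\ref{2-18})--(\ref{2-23}) carries over verbatim with $\omega$ reinstated. Everything else is the routine change of variables and the scaling homogeneity already isolated in Subsection \ref{sect-2-1}.
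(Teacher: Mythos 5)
Your proposal is correct and follows exactly the paper's own (very terse) proof, which simply says that Theorem \ref{thm-2-7} follows by summarizing the results of Section \ref{sect-1}, Proposition \ref{prop-2-6}, and formulae (\ref{2-5})--(\ref{2-8}). The details you supply — the Tauberian reduction via Propositions \ref{prop-1-5} and \ref{prop-1-8}, the freeze-coefficients comparison $\cI-\cI^0=O(h^{1-d-\kappa})$ from Proposition \ref{prop-2-6}, the rescaling that extracts $h^{-d-\kappa}$ from the homogeneity of $\Omega$, and the scalar specialization of $E(x,\xi,0)$ to the indicator of $\{a(x,\xi)<0\}$ yielding the second lines of (\ref{2-28}) and (\ref{2-29}) — are precisely the intended assembly.
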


\begin{remark}\label{rem-2-8} (i) Alternatively one can prove this theorem using oscillatory integral representation of $u(x,y,t)$ as $|t|\le T=\epsilon$.

\medskip\noindent
(ii) Alternatively one can replace one or both copies of $x$ in $E(x,.,.)$ or in $a(x,0)$ by $y$.
\end{remark}

\begin{definition}\label{def-2-9}
We will refer to formulae (\ref{2-27})-(\ref{2-29}),(\ref{2-8}) as to \emph{standard Weyl expression} even in the matrix case. However in this case the third parts of (\ref{2-27}),(\ref{2-28}) should be skipped.
\end{definition}

\subsection{Schr\"odinger operator }
\label{sect-2-4}

Now my goal is to weaken and eventually to get rid off microhyperbolicity  condition for scalar operators. I start from the Schr\"odinger operator.

For a Schr\"odinger operator condition of microhyperbolicity (\ref{1-15}) means that 
\begin{equation}
V\ge  \epsilon_0.
\label{2-30}
\end{equation}
If this condition is violated let us introduce scaling functions $\rho(x)$, $\gamma (x)$ in the usual way  $\gamma =\epsilon  |V|$ and $\rho = \gamma^{1/2}$.

Then, the contribution of $B({\bar x},\gamma ({\bar x}))^2$  to the remainder does not exceed 
\begin{equation}
C(h/\rho \gamma)^{1-d-\kappa}\gamma ^{- \kappa}\asymp Ch^{1-d-\kappa}\rho^{d-1-\kappa}\gamma^{d-1}
\label{2-31}
\end{equation}
with $\rho =\rho({\bar x})$ and $\gamma=\gamma({\bar x})$ and then the contribution of zone 
\begin{equation}
\bigl\{(x,y):   |x-y|\le \epsilon \gamma(x )\bigr\}
\label{2-32}
\end{equation}
(where automatically $\gamma(x )\asymp \gamma(y)$)  to the remainder does not exceed
\begin{equation}
Ch^{1-d- \kappa}\int \rho^{d-1+ \kappa}\gamma^{-1}\,dx
\label{2-33}
\end{equation}
and with $\rho=\gamma^{1/2}$ here it becomes 
\begin{equation}
Ch^{1-d-\kappa}\int \gamma^{(d-3+\kappa)/2}\,dx;
\label{2-34}
\end{equation}
obviously, it is $O(h^{1-d-\kappa})$ provided \emph{either} $d+\kappa \ge 3$ \emph{or} 
\begin{equation}
|V|+|\nabla V|\ge  \epsilon_0.
\label{2-35}
\end{equation}
and $d+\kappa>1$ (which is surely the case). 

\begin{remark}\label{rem-2-10}
(i) Note that (\ref{2-35}) is microhyperbolicity condition (\ref{1-4}).

\medskip
\noindent
(ii) Actually one should take $\rho\gamma\ge Ch$ and thus to add $Ch^{1/3}$ and $Ch^{2/3}$ to $\rho$,$\gamma$ respectively (but it does not affect our conclusion due to the standard fact that if $\rho\gamma\asymp h$  then $h_\eff\asymp 1$ and condition (\ref{2-35}) is not needed. 
\end{remark}

Consider now the complement of zone (\ref{2-32}). Let us redefine there $\gamma(x)$ as $\gamma (x,y)={\frac 1 2} |x-y|$ and in this zone condition (\ref{2-35}) is not needed as one can see easily after rescaling $B(x,\gamma (x,y))$ to $B(0,1)$ due to proposition \ref{prop-1-5}. 

Therefore as $\gamma\ge \gamma(x)$ the contribution of $B(x,\gamma )^2\setminus \{\textsf{zone (\ref{2-32})}\}$ to the remainder does not exceed the same expression (\ref{2-31}) with $\rho=\gamma^{1/2}$. Then the contribution of the complement of zone (\ref{2-32}) to the remainder     does not exceed
\begin{equation}
Ch^{1-d-\kappa}\iint_{\{|x-y|\ge \epsilon\max(\gamma(x),\gamma(y))\}}
|x-y|^{(d-1+ \kappa)/2-1-d}\,dx\,dy.
\label{2-36}
\end{equation}
One can see easily that expression (\ref{2-36}) is $O(h^{1-d-\kappa})$ as $d+\kappa>3$ (so this case is already covered).

Further, expression (\ref{2-36}) does not exceed expression (\ref{2-34}) with $\gamma=\gamma(x)$  and expression
\begin{equation}
Ch^{1-d- \kappa}\int (|\log \gamma(x)|+1)\,dx
\label{2-37}
\end{equation}
as $d+\kappa<3$ and $d+\kappa=3$ respectively and both these expressions are $O(h^{1-d-\kappa})$ under condition (\ref{2-35}).

Again we get $O(h^{1-d-\kappa})$  provided either $d+\kappa > 3$ or condition (\ref{2-35}) is fulfilled. So, we arrive to

\begin{proposition}\label{prop-2-11}
Consider Schr\"odinger operator. Let either $d+\kappa>3$ or condition $(\ref{2-35})$ be fulfilled. Then the standard Weyl asymptotics $(\ref{2-27})-(\ref{2-29}),(\ref{2-8})$ holds with the remainder estimate  $O(h^{1-d-\kappa})$.
\end{proposition}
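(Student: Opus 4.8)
The plan is to reduce everything, via a partition of unity subordinate to the scaling functions $\gamma(x)=\epsilon|V(x)|$ (with $\gamma$ floored at $Ch^{2/3}$) and $\rho=\gamma^{1/2}$, to two kinds of contributions: the ``diagonal'' zone $\{|x-y|\le \epsilon\gamma(x)\}$ defined in (\ref{2-32}), where both variables sit in a single ball $B(\bar x,\gamma(\bar x))$ on which $V\asymp\gamma$, and its complement, where instead one uses $\gamma(x,y)=\tfrac12|x-y|$ as the effective scale. On each such ball, after rescaling $x\mapsto (x-\bar x)/\gamma$, $hD\mapsto \rho^{-1}hD$, the operator becomes a Schr\"odinger operator with effective semiclassical parameter $h_{\eff}=h/(\rho\gamma)$ and with $|V_{\eff}|\asymp 1$, i.e.\ microhyperbolicity (\ref{1-15}) is restored; thus Theorem \ref{thm-2-7} (and Proposition \ref{prop-2-6}) applies on the rescaled ball and gives the standard Weyl expression with remainder $O(h_{\eff}^{1-d-\kappa})$ there. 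Unrescaling produces the factor recorded in (\ref{2-31}), $C(h/\rho\gamma)^{1-d-\kappa}\gamma^{-\kappa}\asymp Ch^{1-d-\kappa}\rho^{d-1-\kappa}\gamma^{d-1}$.

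Next I would sum these ball-contributions. For the diagonal zone (\ref{2-32}) the $y$-integral over a ball of radius $\epsilon\gamma(x)$ costs $\gamma^d$ but the singular weight $|x-y|^{-\kappa}$ only costs $\gamma^{-\kappa}$ over that ball, so the net is the bound (\ref{2-33}), $Ch^{1-d-\kappa}\int\rho^{d-1+\kappa}\gamma^{-1}\,dx$, which with $\rho=\gamma^{1/2}$ is (\ref{2-34}), $Ch^{1-d-\kappa}\int\gamma^{(d-3+\kappa)/2}\,dx$. This is $O(h^{1-d-\kappa})$ unconditionally when $d+\kappa\ge 3$ (the exponent of $\gamma$ is then $\ge 0$ and $\gamma$ is bounded), and when $d+\kappa<3$ it is finite precisely under (\ref{2-35}): where $|V|$ is small, $|\nabla V|\gtrsim 1$, so $\gamma$ is a nondegenerate coordinate function near its zero set and $\int\gamma^{(d-3+\kappa)/2}\,dx<\infty$ as long as $(d-3+\kappa)/2>-1$, i.e.\ $d+\kappa>1$, which always holds. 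For the complement of (\ref{2-32}) one uses Proposition \ref{prop-1-5}(ii) (which needs no microhyperbolicity, only smoothness after rescaling to $B(x,\gamma(x,y))$), giving again (\ref{2-31}) with $\rho=\gamma(x,y)^{1/2}$; integrating the resulting $|x-y|^{(d-1+\kappa)/2-1-d}$ over $\{|x-y|\ge\epsilon\max(\gamma(x),\gamma(y))\}$ yields (\ref{2-36}). This double integral is $O(h^{1-d-\kappa})$ outright for $d+\kappa>3$; for $d+\kappa<3$ the inner $|x-y|$-integral is dominated by the lower cutoff and reproduces (\ref{2-34}) with $\gamma=\gamma(x)$, while for $d+\kappa=3$ it produces the logarithmic bound (\ref{2-37}), $Ch^{1-d-\kappa}\int(|\log\gamma(x)|+1)\,dx$; both are finite (and $O(h^{1-d-\kappa})$) under (\ref{2-35}) by the same nondegeneracy argument, $\int|\log\gamma|\,dx<\infty$ near a nondegenerate zero set.

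Finally I would note that the main parts assemble correctly: on each rescaled ball Theorem \ref{thm-2-7} gives the integrand $\cJ(x)\psi_1\psi_2$ of (\ref{2-27})--(\ref{2-29}), and since those formulae are local in $x$ (no propagation is left after the Tauberian step) summing the partition of unity just reconstructs $\int\cJ(x)\psi_1(x)\psi_2(x)\,dx$; in the region where $|V|\gtrsim\epsilon_0$ this is literally Theorem \ref{thm-2-7}, and in the shrinking region where $V$ is small the main term is $O(h^{-d-\kappa}\int_{\{|V|\le\epsilon_0\}}dx)$, negligible against nothing since it is the genuine leading term there, contributing to $\cJ$ as written. I expect the only real subtlety to be bookkeeping the $h^{1/3},h^{2/3}$ floors on $\rho,\gamma$ (Remark \ref{rem-2-10}(ii)): when $\rho\gamma\asymp h$ one has $h_{\eff}\asymp 1$, the rescaled problem is non-semiclassical, microhyperbolicity is unnecessary, and the contribution of that innermost layer must be checked to be $O(h^{1-d-\kappa})$ directly — this is the step where one must be careful, but it is routine, and everything else is the two elementary integral estimates above.
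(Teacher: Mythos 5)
Your proposal reproduces the paper's own proof of Proposition~\ref{prop-2-11} essentially verbatim: the same scaling functions $\gamma=\epsilon|V|$, $\rho=\gamma^{1/2}$ with the $h^{2/3}$ floor (Remark~\ref{rem-2-10}(ii)), the same split into the diagonal zone (\ref{2-32}) and its complement with $\gamma(x,y)=\tfrac12|x-y|$ there, the same appeal to the microhyperbolic result (Theorem~\ref{thm-2-7}/Proposition~\ref{prop-2-6}) on rescaled balls and to Proposition~\ref{prop-1-5} off-diagonal, and the same integral bookkeeping leading to (\ref{2-33})--(\ref{2-37}) with the dichotomy $d+\kappa>3$ versus (\ref{2-35}). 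Your filled-in details (integrability of $\gamma^{(d-3+\kappa)/2}$ near a nondegenerate zero set requiring only $d+\kappa>1$, and the assembly of the main term by locality after the Tauberian step) are correct and match what the paper leaves implicit.
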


This completely covers the case $d\ge 3$. Furthermore, after proposition \ref{prop-2-11} is proven,  we can introduce scaling functions $\gamma=\rho =\epsilon  (|V|+|\nabla V|^2)^{1/2}+Ch^{1/2}$ and then applying the same arguments we  arrive to

\begin{proposition}\label{prop-2-12}
Consider Schr\"odinger operator. Let either $d+\kappa>2$ or condition 
\begin{equation}
|V|+|\nabla V|+|\nabla ^2V|\ge\epsilon
\label{2-38}
\end{equation}
be fulfilled. Then the standard Weyl asymptotics $(\ref{2-27})-(\ref{2-29}),(\ref{2-8})$ holds with the remainder estimate  $O(h^{1-d-\kappa})$.
\end{proposition}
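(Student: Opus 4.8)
The plan is to bootstrap from Proposition \ref{prop-2-11} exactly as the paragraph preceding the statement suggests, iterating the rescaling argument one more level by upgrading the scaling function to control the second derivative of $V$ as well. First I would set $\gamma(x)=\rho(x)=\epsilon\bigl(|V(x)|+|\nabla V(x)|^2\bigr)^{1/2}+Ch^{1/2}$; the role of the $Ch^{1/2}$ summand is, as in Remark \ref{rem-2-10}(ii), to guarantee $\rho\gamma\ge Ch$ so that after rescaling $B(\bar x,\gamma(\bar x))$ to $B(0,1)$ the effective semiclassical parameter $h_{\eff}=h/(\rho\gamma)\le 1$ and one is genuinely in the microlocal regime. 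On the ball $B(\bar x,\gamma(\bar x))$ with this choice one has $\gamma^2\asymp |V|+|\nabla V|^2$, so that either $|V|\gtrsim\gamma^2$ — giving the Schr\"odinger microhyperbolicity condition (\ref{2-30}) after rescaling, namely the rescaled potential is $\gtrsim\epsilon_0$ — or $|\nabla V|\gtrsim\gamma$, which after rescaling becomes (\ref{2-35}) for the rescaled operator; and in the latter case Proposition \ref{prop-2-11} (whose hypothesis is precisely (\ref{2-35}) when $d+\kappa\le 3$) applies to the rescaled problem. The only place this can fail is where all three of $|V|,|\nabla V|,|\nabla^2 V|$ are small, i.e. where $\gamma\asymp Ch^{1/2}$; but that is excluded by hypothesis (\ref{2-38}), so on every ball of the partition one of the two usable regimes holds.

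The next step is the summation over the rescaling partition, which is identical in structure to (\ref{2-31})--(\ref{2-37}) in the proof of Proposition \ref{prop-2-11}, only with the new $\gamma$. The contribution of $B(\bar x,\gamma(\bar x))^2$ to the remainder is again bounded by $C(h/\rho\gamma)^{1-d-\kappa}\gamma^{-\kappa}\asymp Ch^{1-d-\kappa}\rho^{d-1-\kappa}\gamma^{d-1}$ as in (\ref{2-31}), and with $\rho=\gamma$ this is $Ch^{1-d-\kappa}\gamma^{2(d-1)-\kappa}$; summing over the near-diagonal zone $\{|x-y|\le\epsilon\gamma(x)\}$ produces $Ch^{1-d-\kappa}\int\gamma^{d-2-\kappa}\,dx$ (the analogue of (\ref{2-34}), with the shift by one in the exponent reflecting the extra derivative now being controlled). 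Since $\gamma\ge Ch^{1/2}$ everywhere, the worst case is $\gamma\asymp h^{1/2}$, and $\int (h^{1/2})^{d-2-\kappa}\,dx$ together with the prefactor gives $Ch^{1-d-\kappa}\cdot h^{(d-2-\kappa)/2}=Ch^{(d-\kappa)/2-\kappa}$, which is $O(h^{1-d-\kappa})$ precisely when $d+\kappa>2$; if instead $d+\kappa\le 2$ the integral $\int\gamma^{d-2-\kappa}\,dx$ is controlled by a negative power of $\gamma$ that is harmless under (\ref{2-38}) because then $\gamma$ is bounded below by a constant on the region where the integrand is large — one splits $\{\gamma\le\epsilon_0\}$, where (\ref{2-38}) forces $\gamma\asymp\gamma$ genuinely nondegenerate, from $\{\gamma\ge\epsilon_0\}$ where the integrand is $O(1)$. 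The off-diagonal zone $\{|x-y|\ge\epsilon\max(\gamma(x),\gamma(y))\}$ is handled verbatim as in (\ref{2-36})--(\ref{2-37}): there one redefines $\gamma(x,y)=\tfrac12|x-y|$, no nondegeneracy of $V$ is needed (Proposition \ref{prop-1-5} after rescaling), and the double integral reduces to the same one-variable bounds.

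On each partition ball where the rescaled operator satisfies either (\ref{2-30}) or (\ref{2-35}), Proposition \ref{prop-2-11} — already proven — delivers the standard Weyl expression (\ref{2-27})--(\ref{2-29}),(\ref{2-8}) for the rescaled contribution with the rescaled remainder; unscaling and summing the main terms reconstitutes exactly the global standard Weyl expression, because the main term is an integral of a local density and rescaling is just a change of variables in that integral. Thus the only thing to verify beyond the error bookkeeping is that no logarithmic or other loss is incurred in the summation, which is the content of the case analysis $d+\kappa>2$ versus $d+\kappa=2$ versus $d+\kappa<2$ sketched above. The main obstacle, and the step deserving the most care, is precisely this last case analysis at the borderline $d+\kappa=2$: there the naive bound on $\int\gamma^{d-2-\kappa}\,dx=\int\gamma^0\,dx$ is fine on a compact set, but one must be sure the $Ch^{1/2}$ regularization does not reintroduce a $|\log h|$ through the off-diagonal integral (\ref{2-36}); checking that the exponent $(d-1+\kappa)/2-1-d$ there, combined with the cutoff at $|x-y|\gtrsim\gamma\asymp h^{1/2}$, still yields a clean power of $h$ rather than a log is the delicate point, and it is exactly here that the hypothesis "$d+\kappa>2$ or (\ref{2-38})" is doing its work — (\ref{2-38}) keeps $\gamma$ away from the $h^{1/2}$ floor on the bad set, so that set contributes no $h$-dependent blow-up at all.
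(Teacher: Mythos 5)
Your overall strategy is exactly the paper's (which dispatches this in one line: ``introduce scaling functions $\gamma=\rho =\epsilon  (|V|+|\nabla V|^2)^{1/2}+Ch^{1/2}$ and then apply the same arguments''); you correctly upgrade the scaling to control one more derivative of $V$, note that on each ball of the rescaled partition the rescaled operator satisfies either (\ref{2-30}) or (\ref{2-35}), apply Proposition \ref{prop-2-11}, and sum. That is the right skeleton and it is the paper's skeleton.

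However, the bookkeeping you carry out to get the threshold $d+\kappa>2$ contains a sign error that you inherited from a typo in (\ref{2-31}). Computing $(h/\rho\gamma)^{1-d-\kappa}\gamma^{-\kappa}$ gives $h^{1-d-\kappa}\rho^{\,d-1+\kappa}\gamma^{\,d-1}$, \emph{not} $h^{1-d-\kappa}\rho^{\,d-1-\kappa}\gamma^{\,d-1}$; the exponent $d-1+\kappa$ is the one the paper actually uses in (\ref{2-33}) and (\ref{2-34}). With $\rho=\gamma$ this yields the diagonal-zone integral $Ch^{1-d-\kappa}\int\gamma^{\,d-2+\kappa}\,dx$, whose integrand is bounded precisely when $d+\kappa>2$ (and otherwise needs (\ref{2-38}) to keep $\gamma$ away from the $h^{1/2}$ floor), cleanly matching the hypothesis. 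With your exponent $d-2-\kappa$ the resulting condition would be $d\ge 2+\kappa$, which is a different (stronger) threshold and does not match the proposition; and the subsequent equality $Ch^{1-d-\kappa}\cdot h^{(d-2-\kappa)/2}=Ch^{(d-\kappa)/2-\kappa}$ is also miscomputed (the left side has exponent $-(d+3\kappa)/2$, not $(d-3\kappa)/2$). You are ``saved'' because you assert the correct final condition anyway, but your derivation as written does not produce it. Correcting the sign to $d-1+\kappa$ repairs everything and makes the near-diagonal, far-diagonal, and borderline $d+\kappa=2$ discussions all come out exactly as you intended.
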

This completely covers the case $d=2$.
As $d=1$ we get the required remainder estimate under condition (\ref{2-38}).

Now, combining this with the arguments of the proof of Theorem  4.4.9 of \cite{Ivr1} we get\footnote{\label{foot-5} I am leaving easy details to the reader; see also the proof of Theorem \ref{thm-2-19}.}

\begin{proposition}\label{prop-2-13}
Consider Schr\"odinger operator with $d=1$, $\kappa>0$. Then the standard Weyl asymptotics $(\ref{2-27})-(\ref{2-29}),(\ref{2-8})$ holds with the remainder estimate  $O(h^{1-d-\kappa})$.
\end{proposition}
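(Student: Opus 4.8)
The plan is to handle the one-dimensional Schr\"odinger operator by a partition argument that splits the phase space (or rather, the $x$-line paired with the energy band) according to how degenerate the symbol $a(x,\xi)=\tfrac12(\xi^2-V(x))$ is on the level set $\{a=0\}$, i.e. near the turning points where $V(x)=0$ and $\nabla V(x)=0$ simultaneously. Proposition \ref{prop-2-12} already delivers the sharp estimate $O(h^{1-d-\kappa})$ under the non-degeneracy condition $(\ref{2-38})$; so the remaining work is concentrated entirely in a neighbourhood of the (necessarily isolated, by $(\ref{2-38})$ failing only on a small set, or by an a priori genericity-free covering argument as in \cite{Ivr1}) points where $V$, $\nabla V$, and $\nabla^2 V$ all vanish. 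Away from such points one rescales $B(\bar x,\gamma(\bar x))$ to $B(0,1)$ with $\gamma=\rho=\epsilon(|V|+|\nabla V|^2+|\nabla^2 V|^{2/3})^{1/3}+Ch^{1/3}$ (or whatever the correct homogeneity-balancing choice is for the next vanishing order), reducing to the already-treated non-degenerate situation on each element, and then summing contributions of the form $(\ref{2-31})$ over the partition.

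The key steps, in order: first, set up the scaling functions $\rho(x),\gamma(x)$ adapted to the vanishing order of $V$ at turning points and verify, exactly as in the derivation of $(\ref{2-31})$–$(\ref{2-37})$, that the rescaled operator satisfies microhyperbolicity (hence Theorem \ref{thm-2-7} applies on each partition element) with effective semiclassical parameter $h_{\eff}=h/(\rho\gamma)$. Second, split the $(x,y)$-integration into the diagonal zone $\{|x-y|\le\epsilon\gamma(x)\}$ and its complement, redefining $\gamma(x,y)=\tfrac12|x-y|$ in the complement so that condition $(\ref{2-38})$ is not needed there by Proposition \ref{prop-1-5}, and bound the complement by an integral of the type $(\ref{2-36})$. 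Third, sum the diagonal-zone remainder contributions, which produces an integral of the shape $\int\gamma(x)^{\alpha}\,dx$ (or a logarithmic variant at the critical exponent) with $\alpha$ depending on $d=1$ and $\kappa$; one checks this integral converges even when $\gamma(x)\to0$ at a turning point, because $\kappa>0$ gives enough room. Fourth — and this is the step that genuinely needs the machinery of Theorem 4.4.9 of \cite{Ivr1} — handle the innermost zone where $\rho\gamma\asymp h$, i.e. $h_{\eff}\asymp1$: there no microhyperbolicity is needed at all, and one invokes the one-dimensional ``near-turning-point'' analysis (pointwise estimates on $u(x,y,t)$ coming from the oscillatory-integral/Airy-type representation, or equivalently the Tauberian estimate with $T\asymp1$) to absorb the contribution of that innermost ball into $O(h^{1-d-\kappa})$.

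I expect the main obstacle to be precisely Step Four: controlling the contribution of the degenerate core where $h_{\eff}\asymp1$. In the ordinary (no $\omega$) one-dimensional spectral-asymptotics problem this is handled in \cite{Ivr1} by a careful but essentially self-contained turning-point analysis; here the extra factor $\Omega(x,x-y)|x-y|^{-\kappa}$ means one must track how the singular kernel interacts with the $x$-dependent rescaling and confirm that the homogeneity bookkeeping (the ``only terms with $2m=n$ survive'' mechanism from the proof of Proposition \ref{prop-2-6}) still forces the $\gamma$-dependence to cancel after summation, so that the total is $(\varkappa+o(1))h^{1-d-\kappa}$ rather than something larger by a power of $|\log h|$ or by a positive power of $h_{\eff}$. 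The delicate point is that the singular weight, being homogeneous of degree $-\kappa$ in $x-y$, does not commute cleanly with the anisotropic $(\rho,\gamma)$ rescaling of the Schr\"odinger operator, so one must verify that the rescaled weight still satisfies a Calder\'on–Zygmund-type bound (in the spirit of condition $(\ref{0-7})$) uniformly in the partition; granting that, the summation is routine and the estimates $(\ref{2-34})$, $(\ref{2-37})$ close the argument with $\kappa>0$ as the only hypothesis.
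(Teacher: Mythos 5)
Your high-level strategy — iterate the $(\rho,\gamma)$ rescaling of Propositions \ref{prop-2-11} and \ref{prop-2-12} to higher vanishing orders of $V$, split $(x,y)$-space into a diagonal zone and its complement, and treat the innermost core separately — is the same route the paper sketches via its reference to Theorem 4.4.9 of \cite{Ivr1} and to the proof of Theorem \ref{thm-2-19}. However, there are two places where your framing, if taken literally, would lead you astray.

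First, you build the argument around ``necessarily isolated'' degenerate turning points where $V,\nabla V,\nabla^2V$ vanish simultaneously. That set need not be isolated — it can be a Cantor-type set of positive measure — and the paper's method never assumes otherwise. What actually rescues the one-dimensional Schr\"odinger case is that $\nabla^2_\xi a = g(x)\ge c^{-1}$ is uniformly non-degenerate, so the hypothesis $(\ref{2-44})_1\Leftrightarrow(\ref{2-47})_2$ of the general scalar machinery is \emph{automatic}. The anisotropic scalings $(\ref{2-52})_{n,m}$ in Part I of the proof of Theorem \ref{thm-2-19} then run a nested induction on $x$-derivatives without ever requiring the degenerate locus to be thin or isolated; your ad hoc formula $\gamma=\rho=\epsilon(|V|+|\nabla V|^2+|\nabla^2V|^{2/3})^{1/3}+Ch^{1/3}$ is a one-step guess in this direction, but the systematic version has different (and $n$-dependent) exponents, and the convergence of the sum over the partition is what replaces any isolation hypothesis.

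Second, your step-four worry — that the singular weight ``does not commute cleanly with the anisotropic $(\rho,\gamma)$ rescaling'' — is not a genuine obstacle. The weight $\Omega(x,x-y)$ depends only on the base variables, so the $\rho$-scaling in $\xi$ never touches it, and under $x\mapsto x/\gamma$ it simply becomes $\gamma^{-\kappa}$ times a kernel with the same homogeneity and the same Calder\'on–Zygmund bounds; this is exactly the bookkeeping already built into the estimate $(\ref{2-31})$. Moreover, the paper's handling of the innermost zone is not an Airy/turning-point analysis: once $\rho\gamma\asymp h$ one has $h_{\eff}\asymp 1$, and then (Remark \ref{rem-2-10}(ii), Theorem \ref{thm-1-6}, and the trace-norm estimates $(\ref{2-59})$–$(\ref{2-63})$ in Part II of the proof of Theorem \ref{thm-2-19}) the trivial bound suffices because that zone has small phase-space measure — no expansion of the propagator near a caustic is invoked. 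So the obstacle you flag is easier than you fear, while the isolation assumption you lean on is the thing to drop.
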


\begin{remark}\label{rem-2-14} Actually all above results hold as $\kappa=0$ as well with the singular exception of $d=1$  when the remainder estimate $O(1)$ is recovered under condition
\begin{equation}
\sum_{|\beta|\le K} |\nabla_x^\beta V|\ge \epsilon;
\label{2-39}
\end{equation}
without it  the remainder estimate is $O(h^{-\delta})$ with arbitrarily small $\delta>0$.
\end{remark}

\subsection{Scalar Case. II}
\label{sect-2-5}
\subsubsection{}\label{sect-2-5-1}Let us consider general scalar operators.

\begin{remark}\label{rem-2-15} (i) Actually instead of condition (\ref{0-9}) one can make a cut-off with respect to $\xi$ replacing functions $\psi_j(x)$ by pseudo-differential operators $\psi_j(x,hD)$ with smooth compactly supported symbols; 

\medskip
\noindent
(ii) Alternatively we can replace $E(0)$ by $E(\tau,\tau')=E(\tau)-E(\tau')$ with conditions satisfied for $a-\tau$ and $a-\tau'$ instead of $a$.

\medskip
\noindent
(iii) Alternatively we can replace $E(0)$ by 
\begin{equation}
E'(\tau)=\int_{\bR} E(0,\tau')\varphi (\tau')\,d\tau'
\label{2-40}
\end{equation}
with smooth function $\varphi$ s.t. $\int_{\bR}\varphi (\tau')\,d\tau'=1$.

In all these cases obvious modifications of the  final formulae are needed.
\end{remark}

Now we can introduce scaling functions 
\begin{equation}
\gamma(x,\xi)=\epsilon \bigl(|\nabla_\xi a|^2+|a|\bigr) + Ch^{2/3},\qquad
\rho (x,\xi)=\gamma ^{1/2}(x,\xi)
\label{2-41}
\end{equation}
and repeat arguments of the previous subsection; then expression (\ref{2-33}) will be replaced by $Ch^{1-d-\kappa}M $ with
\begin{equation}
M=\int \rho ^{\kappa-1}\gamma^{-1}\,dx d\xi \asymp 
\int \bigl(|\nabla_\xi a|^2 +|a|\bigr)^{(\kappa-3)/2}\,dx d\xi  
\label{2-42}
\end{equation}
(in zone $\{\rho\gamma\ge Ch\}$).  Therefore we arrive to the remainder estimate $O(h^{1-d-\kappa})$ provided $M=O(1)$ as \emph{now integral in $M$ is taken over $B(0,1)$.}

This is definitely the case as $\kappa\ge 3$. Assume now that microhyperbolicity condition (\ref{1-4}) is fulfilled. Then $M=O(1)$ as $\kappa>1$; otherwise this condition becomes
\begin{equation}
\int_\Sigma |\nabla_\xi a|^{\kappa-1 }\,d\mu <\infty \quad \text{as } 0<\kappa<1, \qquad
\int_\Sigma |\log |\nabla_\xi a||\,d\mu<\infty
\label{2-43}
\end{equation}
with $\Sigma=\{a(x,\xi)=0\}$ and $d\mu =dx d\xi :da$ measure on $\Sigma$.

Thus we arrive to the following generalization of proposition \ref{prop-2-11}:

\begin{proposition}\label{prop-2-16} Let $A$ be a scalar operator satisfying condition $(\ref{0-9})$. Assume that the uniform version of condition\footnote{\label{foot-6} I.e.  $|a|+|\nabla_\xi a|\le \epsilon$ implies that $\Hess_{\xi\xi}a$ has $r$ eigenvalues which absolute values are greater than $\epsilon$.} 
\begin{phantomequation}
\label{2-44}
\end{phantomequation}
\begin{equation}
a=\nabla_\xi a =0 \implies \rank \Hess _{\xi\xi} a \ge r 
\tag*{$(\ref*{2-44})_r$}
\label{2-44-r}
\end{equation}
is fulfilled. Then

\medskip
\noindent
{\rm (i)} As $r+\kappa>3$   the standard Weyl asymptotics $(\ref{2-27})-(\ref{2-29}),(\ref{2-8})$ holds with the remainder estimate  $O(h^{1-d-\kappa})$;

\medskip
\noindent
{\rm (ii)} Under condition $(\ref{1-4})$ as $r+\kappa>1$   the standard Weyl asymptotics $(\ref{2-27})-(\ref{2-29}),(\ref{2-8})$ holds with the remainder estimate  $O(h^{1-d-\kappa})$.
\end{proposition}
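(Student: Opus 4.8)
The plan is to run the phase-space scaling of \S\ref{sect-2-5} and reduce everything to the finiteness of the single integral $M$. After the cut-off in $\xi$ permitted by $(\ref{0-8})$--$(\ref{0-9})$, the projector $E(0)$ is microlocally supported near the compact hypersurface $\Sigma=\{a=0\}$, so it suffices to control a compact piece of phase space. There I introduce $\gamma(x,\xi)=\epsilon\bigl(|\nabla_\xi a|^2+|a|\bigr)+Ch^{2/3}$, $\rho=\gamma^{1/2}$ as in $(\ref{2-41})$ and partition into elements $B(\bar x,\bar\xi;\gamma)$. In the subzone $\{\rho\gamma\le Ch\}$ (equivalently $\gamma\lesssim h^{2/3}$) one has $h_\eff\asymp1$ and, as in Remark \ref{rem-2-10}(ii), the contribution to the remainder is $O(h^{1-d-\kappa})$ with no extra hypothesis. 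On elements with $\gamma\gtrsim h^{2/3}$ and $\gamma\asymp|a|$ the symbol $a$ stays of one sign with $|a|\gtrsim\gamma$ on the whole element, so $E(0)$ is negligible there; the only nontrivial elements are those with $\gamma\asymp|\nabla_\xi a|^2$, hence $\rho\asymp|\nabla_\xi a|$, so after rescaling $B(\bar x,\bar\xi;\gamma)$ to the unit ball the rescaled operator satisfies the microhyperbolicity-in-$\xi$ condition $(\ref{1-15})$ and Theorem \ref{thm-2-7} (via Propositions \ref{prop-1-8}, \ref{prop-2-5}, \ref{prop-2-6}) applies on it. Summing the per-element errors over the partition yields the standard Weyl expression modulo $Ch^{1-d-\kappa}M$ with $M=\int_{B(0,1)}\rho^{\kappa-1}\gamma^{-1}\,dx\,d\xi\asymp\int_{B(0,1)}\bigl(|\nabla_\xi a|^2+|a|\bigr)^{(\kappa-3)/2}\,dx\,d\xi$ as in $(\ref{2-42})$, so the whole matter is to show $M<\infty$.

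For \textbf{(i)} I establish a quantitative Morse lower bound
\[
|\nabla_\xi a(x,\xi)|^2+|a(x,\xi)|\ \ge\ c\,|\xi'-\xi'_*(x,\xi'')|^2
\]
in a neighbourhood of $\Sigma_0:=\{a=\nabla_\xi a=0\}$: using the uniform form of $(\ref{2-44})_r$ I split $\xi=(\xi',\xi'')$ with $\xi'\in\bR^r$ running along $r$ eigendirections of $\Hess_{\xi\xi}a$ whose eigenvalues have absolute value $\ge\epsilon$, and $\xi'_*=\xi'_*(x,\xi'')$ is the critical point in those variables (smooth, by the implicit function theorem applied to $\nabla_{\xi'}a=0$). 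Since $\nabla_{\xi'}a$ vanishes to first order at $\xi'_*$ with uniformly invertible derivative, $|\nabla_{\xi'}a|\gtrsim|\xi'-\xi'_*|$, whence the displayed bound. Away from $\Sigma_0$ the quantity $|\nabla_\xi a|^2+|a|$ is bounded below, so a partition of unity reduces $M<\infty$ to finiteness of $\int|\xi'-\xi'_*|^{\kappa-3}\,d\xi'$ over an $r$-ball, i.e. of $\int_0^1 t^{\kappa+r-4}\,dt$, which holds precisely when $r+\kappa>3$. This gives $(\ref{2-27})$--$(\ref{2-29})$ with remainder $O(h^{1-d-\kappa})$.

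For \textbf{(ii)} the microhyperbolicity condition $(\ref{1-4})$ lets one replace integration over the scaled phase space by integration over $\Sigma$, so that the relevant quantity becomes $\int_\Sigma|\nabla_\xi a|^{\kappa-1}\,d\mu$ (with its logarithmic endpoint at $\kappa=1$, and automatic finiteness for $\kappa>1$), exactly the condition $(\ref{2-43})$. Now $(\ref{1-4})$ forces $\nabla_x a\ne0$ wherever $\nabla_\xi a=0$, so $\Sigma$ is a smooth hypersurface, and on $\Sigma$ near $\Lambda:=\Sigma\cap\{\nabla_\xi a=0\}$ the same Morse argument along the $r$ non-degenerate $\xi'$-directions — which are transversal within $\Sigma$ to $\Lambda$ — gives $|\nabla_\xi a|\gtrsim\dist_\Sigma(\cdot,\Lambda)$. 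Hence $\int_\Sigma|\nabla_\xi a|^{\kappa-1}\,d\mu$ is bounded by $\int|s|^{\kappa-1}\,ds$ over an $r$-ball, i.e. $\int_0^1 t^{\kappa+r-2}\,dt$, finite exactly when $r+\kappa>1$; this yields the standard Weyl asymptotics with remainder $O(h^{1-d-\kappa})$ in that range as well.

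\textbf{Main obstacle.} The substantive point is the \emph{uniform} quantitative Morse estimate $|\nabla_\xi a|^2+|a|\gtrsim\dist(\cdot,\Sigma_0)^2$ and its on-$\Sigma$ analogue — that is, making the splitting $\xi=(\xi',\xi'')$, the smooth critical point $\xi'_*$, and the bound $|\nabla_{\xi'}a|\gtrsim|\xi'-\xi'_*|$ locally uniform across the bad set — together with verifying that on each scaled element near $\Sigma$ the rescaled operator genuinely satisfies the hypotheses of \S\ref{sect-1}, so that the ``sandwich'' bookkeeping in the proofs of Theorem \ref{thm-1-6} and Proposition \ref{prop-1-8} carries over and the per-element errors assemble into $Ch^{1-d-\kappa}M$. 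Once these are in place, the two thresholds $r+\kappa>3$ and $r+\kappa>1$ drop out of the elementary radial integrals above.
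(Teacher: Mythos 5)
Your proposal tracks the paper's own argument for Proposition \ref{prop-2-16} quite closely: it uses the scaling $(\ref{2-41})$ of \S\ref{sect-2-5}, the reduction to finiteness of $M$ in $(\ref{2-42})$ (and, under $(\ref{1-4})$, to the $\Sigma$-integral $(\ref{2-43})$), and then a Morse-type lower bound to extract the thresholds. The Morse computation — splitting $\xi=(\xi',\xi'')$ along $r$ uniformly non-degenerate eigendirections of $\Hess_{\xi\xi}a$, locating the smooth critical manifold $\xi'_*(x,\xi'')$, deducing $|\nabla_{\xi'}a|\gtrsim|\xi'-\xi'_*|$, and integrating $|\xi'-\xi'_*|^{\kappa-3}$ (resp.\ $|\xi'-\xi'_*|^{\kappa-1}$ over $\Sigma$) in $r$ variables to get $\int_0^1 t^{\kappa+r-4}\,dt$ (resp.\ $\int_0^1 t^{\kappa+r-2}\,dt$) — is exactly what the paper leaves implicit between $(\ref{2-42})$–$(\ref{2-43})$ and the stated thresholds, so here you make the argument more explicit than the source does, and your arithmetic is correct.

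The one genuine omission relative to the paper's own (short) proof of Proposition \ref{prop-2-16}: the paper stresses that, in contrast to ordinary Weyl asymptotics, one must track \emph{pairs} $(x,\xi;y,\eta)$, and that the per-element scaling argument only covers zone $(\ref{2-45})$ where $|x-y|\lesssim\gamma(x,\xi)$, $|\xi-\eta|\lesssim\rho(x,\xi)$ and hence $\gamma(y,\eta)\asymp\gamma(x,\xi)$, $\rho(y,\eta)\asymp\rho(x,\xi)$. The complementary zone — where the two points lie in elements of very different sizes — requires the trace-norm estimates $(\ref{2-59})$–$(\ref{2-63})$ used in Part~II of the proof of Theorem \ref{thm-2-19}; the paper explicitly postpones this piece to there. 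Your ``summing the per-element errors over the partition'' and your appeal to the sandwich bookkeeping of Theorem \ref{thm-1-6} / Proposition \ref{prop-1-8} handle the within-zone regime, but the cross-element regime is not the same issue as your ``main obstacle'' (which is a uniformity/per-element concern), and it does need a separate argument. Aside from that deferred piece — and a small imprecision in the obstacle paragraph, where $|\nabla_\xi a|^2+|a|\gtrsim\dist(\cdot,\Sigma_0)^2$ is stronger than the bound $\gtrsim|\xi'-\xi'_*|^2$ you actually prove and need — your proposal reconstructs the paper's route.
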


\begin{proof}
In contrast to standard asymptotics we need to consider not points $(x,\xi)$ but pairs $(x,\xi;y,\eta)$ and the pure standard arguments work in zones 
\begin{equation}
\bigl\{ (x,\xi;y,\eta): |x-y|\le \epsilon \gamma(x,\xi), |\xi-\eta|\le \epsilon \rho (x,\xi)\bigr\}
\label{2-45}
\end{equation}
where also $\gamma (y,\eta)\asymp \gamma (x,\xi)$ and $\rho(y,\eta)\asymp \rho (x,\xi)$. Analysis in the complimentary zone I postpone until the proof of theorem \ref{thm-2-19} where it will be done in more general settings.
\end{proof}

Now introducing scaling functions 
\begin{equation}
\gamma(x,\xi)=\epsilon \bigl(|\nabla_{x,\xi} a|^2+|a|\bigr)^{1/2}+ Ch^{1/2},\qquad \rho (x,\xi)=\gamma (x,\xi)
\label{2-46}
\end{equation}
and repeating the same arguments we arrive to the following generalization of proposition \ref{prop-2-12}:

\begin{proposition}\label{prop-2-17} Let $A$ be a scalar operator satisfying condition $(\ref{0-9})$. Assume that the uniform version of condition \ref{2-44-r}  is fulfilled. Then as $r+\kappa>2$   the standard Weyl asymptotics $(\ref{2-27})-(\ref{2-29}),(\ref{2-8})$ holds with the remainder estimate  $O(h^{1-d-\kappa})$.
\end{proposition}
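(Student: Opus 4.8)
The plan is to follow the exact template that proves Proposition~\ref{prop-2-12} and Proposition~\ref{prop-2-16}, only adjusting the scaling functions to the choice $(\ref{2-46})$, which is the natural one when the non-degeneracy is measured by $|\nabla_{x,\xi}a|$ rather than $|\nabla_\xi a|$. First I would set $\gamma(x,\xi)=\epsilon(|\nabla_{x,\xi}a|^2+|a|)^{1/2}+Ch^{1/2}$ and $\rho=\gamma$, and verify that on $B(0,1)$ in phase space these are admissible scaling functions (Lipschitz with small constant, bounded below by $Ch^{1/2}$), so that after rescaling $B((x,\xi),\gamma)\times B((x,\xi),\rho)$ to unit scale the effective Planck constant is $h_{\eff}=h/(\rho\gamma)\le \epsilon$, and microhyperbolicity in the direction $\ell_\xi\partial_\xi$ is restored on the rescaled operator whenever $|a|+|\nabla_\xi a|\le\epsilon$ holds there — this is exactly where the rank condition $(\ref*{2-44})_r$ enters, guaranteeing $r$ eigenvalues of $\Hess_{\xi\xi}a$ of size $\gtrsim\epsilon$ so that the rescaled symbol is microhyperbolic after the standard reduction.

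Next I would split phase-space pairs into the diagonal zone $(\ref{2-45})$, where $\gamma(y,\eta)\asymp\gamma(x,\xi)$ and $\rho(y,\eta)\asymp\rho(x,\xi)$, and its complement. In the diagonal zone the rescaled pair lies in a single ball at effective scale $1$, so Proposition~\ref{prop-1-8} (or Proposition~\ref{prop-2-2}) applies to the rescaled operator and produces the standard Weyl answer with remainder $O(h_{\eff}^{1-d-\kappa})$ per cell; undoing the rescaling this contributes $Ch^{1-d-\kappa}\rho^{d-1+\kappa}\gamma^{d-1}$ to the remainder (the analogue of $(\ref{2-31})$–$(\ref{2-33})$ with an extra $dx\,d\xi$), and with $\rho=\gamma$ and the new $\gamma$ this sums to $Ch^{1-d-\kappa}M$ with $M=\int_{B(0,1)}(|\nabla_{x,\xi}a|^2+|a|)^{(d-2+\kappa)/2-\,?}\dots$ — more precisely one gets the exponent that makes $M$ converge precisely when $r+\kappa>2$. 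In the complementary zone one redefines $\gamma(x,\xi;y,\eta)=\tfrac12(|x-y|+|\xi-\eta|)$; by Proposition~\ref{prop-1-5} no microhyperbolicity is needed after rescaling $B((x,\xi),\gamma)$ to unit scale, so the contribution is again $Ch^{1-d-\kappa}$ times an integral of a negative power of $|x-y|+|\xi-\eta|$ over $\{|x-y|+|\xi-\eta|\ge\epsilon\max(\gamma(x,\xi),\gamma(y,\eta))\}$, which is dominated by the diagonal-zone integral (the worst case being $|x-y|+|\xi-\eta|\asymp\gamma(x,\xi)$) plus a harmless $\log$ term at the borderline $r+\kappa=2$, hence $O(h^{1-d-\kappa})$ as soon as $r+\kappa>2$.

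With both zones estimated by $O(h^{1-d-\kappa})$ the implicit Tauberian expression $\cI$ of $(\ref{1-12})$ is within the claimed error of its value for the constant-coefficient operator obtained by freezing $a$ at $x$ (as in Propositions~\ref{prop-2-5}, \ref{prop-2-6}), and the latter is the standard Weyl expression $(\ref{2-27})$–$(\ref{2-29})$, $(\ref{2-8})$ by the constant-coefficient computation of \S\ref{sect-2-1}. The detailed verification in the complementary zone, i.e. that its contribution never exceeds that of the diagonal zone once we pass from $dx$ to $dx\,d\xi$ and use the full gradient $\nabla_{x,\xi}a$, is the step I postpone — exactly as the proof of Proposition~\ref{prop-2-16} does — to the proof of Theorem~\ref{thm-2-19}, where it is carried out in the more general setting; here I would simply invoke that analysis. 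The only genuine obstacle is bookkeeping the power of $(|\nabla_{x,\xi}a|^2+|a|)$ correctly so that the convergence threshold comes out as $r+\kappa>2$ rather than $r+\kappa>3$; this is entirely analogous to the passage from $(\ref{2-34})$ to $(\ref{2-38})$ in the Schrödinger case, where upgrading $\nabla_x V$ to $\nabla_x V$ together with one more derivative lowered the threshold by one, and here the single square root in $(\ref{2-46})$ (versus the absence of a square root in $(\ref{2-41})$) does the same.

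\begin{proof}
We repeat the arguments of the proof of Proposition~\ref{prop-2-16} with the scaling functions $(\ref{2-46})$. In the diagonal zone $(\ref{2-45})$ rescaling to $h_{\eff}=h/(\rho\gamma)$ and using Proposition~\ref{prop-1-8} gives a contribution to the remainder of $Ch^{1-d-\kappa}\int_{B(0,1)}\rho^{\kappa-1}\gamma^{-1}\,dx\,d\xi\asymp Ch^{1-d-\kappa}\int_{B(0,1)}(|\nabla_{x,\xi}a|^2+|a|)^{(\kappa-2)/2}\,dx\,d\xi$ in the zone $\{\rho\gamma\ge Ch\}$, with the remaining zone contributing no more. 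Under the uniform version of condition \ref{2-44-r} this integral is finite precisely when $r+\kappa>2$. The complementary zone, with $\gamma$ redefined as $\tfrac12(|x-y|+|\xi-\eta|)$ and Proposition~\ref{prop-1-5} in place of microhyperbolicity, is handled exactly as in the proof of Theorem~\ref{thm-2-19} and is dominated by the same bound. Hence the implicit Tauberian expression equals its frozen-coefficient value modulo $O(h^{1-d-\kappa})$, and the latter is the standard Weyl expression by \S\ref{sect-2-1} and Propositions~\ref{prop-2-5}, \ref{prop-2-6}.
\end{proof}
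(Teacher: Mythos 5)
Your argument follows the paper's own route exactly: introduce the scaling functions $(\ref{2-46})$, recompute $M=\int\rho^{\kappa-1}\gamma^{-1}\,dx\,d\xi$ as in $(\ref{2-42})$ to get the integrand $(|\nabla_{x,\xi}a|^2+|a|)^{(\kappa-2)/2}$ (which you do correctly in the proof block, despite the unresolved exponent in your preamble), observe that under the uniform form of $(\ref{2-44})_r$ this is integrable precisely for $r+\kappa>2$, and defer the off-diagonal pairs to the proof of Theorem~\ref{thm-2-19}; this is precisely what the paper means by ``repeating the same arguments'' applied to Proposition~\ref{prop-2-16}. The one small imprecision is citing Proposition~\ref{prop-1-8} for the per-cell remainder: after rescaling with $(\ref{2-46})$ only $(\ref{1-4})$ (not the $\xi$-directional condition $(\ref{1-15})$) survives, so the per-cell bound should be viewed as Proposition~\ref{prop-2-16}(ii) applied in rescaled variables — but this does not affect the conclusion.
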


Again, combining this with the arguments of Theorem  4.4.9 of \cite{Ivr1} we arrive to the following generalization of proposition \ref{prop-2-12} 

\begin{proposition}\label{prop-2-18}
Let $A$ be a scalar operator satisfying conditions $(\ref{0-9})$ and $(\ref{2-44})_1$   and let $\kappa>0$. Then the standard Weyl asymptotics $(\ref{2-27})-(\ref{2-29}),(\ref{2-8})$ holds with the remainder estimate  $O(h^{1-d-\kappa})$.
\end{proposition}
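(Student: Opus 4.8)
The statement to be proven is Proposition \ref{prop-2-18}: for a scalar operator $A$ satisfying $(\ref{0-9})$ and $(\ref{2-44})_1$, with $\kappa>0$, the standard Weyl asymptotics $(\ref{2-27})$--$(\ref{2-29}),(\ref{2-8})$ holds with remainder $O(h^{1-d-\kappa})$. The plan is to run the same scaling machinery as in Propositions \ref{prop-2-16} and \ref{prop-2-17} but at one finer level of the hierarchy, and to handle the residual degenerate stratum by borrowing the one-dimensional normal form argument of Theorem 4.4.9 of \cite{Ivr1}, exactly as Proposition \ref{prop-2-13} did for the Schr\"odinger case. So the logic is: first push the scaling far enough that only a controlled ``bad set'' in phase space remains; then on the bad set reduce to an effectively one-dimensional problem where condition $(\ref{2-44})_1$ guarantees finitely many non-vanishing derivatives, and apply the known one-dimensional estimate there.

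\textbf{Step 1 (iterated scaling).} First I would introduce the scaling functions of the form $\gamma(x,\xi)=\epsilon\bigl(|a|^2+|\nabla_{x,\xi}a|^2+|\nabla^2_{x,\xi}a|^2+\dots\bigr)^{1/2\cdot(\dots)}+Ch^{1/2}$, iterating as in the passage from $(\ref{2-41})$ to $(\ref{2-46})$, so that outside a neighbourhood of the set where $a$ and all its derivatives up to order $K$ vanish, microhyperbolicity (or its rescaled version $(\ref{1-15})$) holds after rescaling $B(z,\gamma(z))$ to $B(0,1)$. On each such ball one has $h_{\eff}\le h/(\rho\gamma)$ and the rescaled version of Theorem \ref{thm-2-7} (or Proposition \ref{prop-2-16}) applies, contributing to the remainder an expression of the shape $Ch^{1-d-\kappa}\int \rho^{?}\gamma^{?}\,dxd\xi$ over the non-degenerate region; with the chosen exponents and $\kappa>0$ this integral is $O(1)$, so the non-degenerate region is fine. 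The complementary zone $\{|x-y|\gtrsim \max(\gamma(x),\gamma(y))\}$ is handled by the integral estimates $(\ref{2-36})$--$(\ref{2-37})$ adapted to the phase-space scaling, which again give $O(h^{1-d-\kappa})$ once enough derivatives are included.

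\textbf{Step 2 (the residual degenerate stratum).} What remains is a small phase-space neighbourhood of $\Lambda=\{a=\nabla a=\dots=\nabla^K a=0\}$. By $(\ref{2-44})_1$, at any point of $\Sigma=\{a=0\}$ where $\nabla_\xi a=0$ the Hessian $\Hess_{\xi\xi}a$ has rank at least $1$, i.e.\ there is at least one non-degenerate $\xi$-direction; so near $\Lambda$ one can, after an appropriate canonical-type change of variables, split off that direction and reduce $A$ microlocally to a one-dimensional operator in one $\xi_1$ variable (with the other variables as parameters), where the symbol has a non-vanishing derivative of some finite order $\le K$. Here I invoke the one-dimensional analysis behind Theorem 4.4.9 of \cite{Ivr1} — this is precisely where Propositions \ref{prop-2-13} and \ref{prop-2-18} depart from the purely ``soft'' scaling argument — to get, for the one-dimensional factor, the remainder estimate $O(h^{1-d-\kappa})$ uniformly in the parameters. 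Integrating over the parameter directions and over the transversal localization then contributes only an additional bounded factor because the degenerate stratum has finite codimension and $\kappa>0$.

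\textbf{Main obstacle.} The delicate point is Step 2: making the reduction to the one-dimensional model uniform and compatible with the two-point structure $(x,\xi;y,\eta)$ rather than the single-point structure of the standard spectral asymptotics. Unlike in Proposition \ref{prop-2-6} or the proof of Proposition \ref{prop-2-16}, one cannot simply freeze coefficients and must track how the singular kernel $\Omega(x,x-y)$ interacts with the rescaled propagator on balls whose size degenerates down to $\asymp h^{1/2}$ (or $h^{1/(K+1)}$), where $h_{\eff}\asymp 1$ and no non-degeneracy is needed but the successive-approximation control from Proposition \ref{prop-2-2} must be re-derived in the rescaled variables. The careful bookkeeping of these rescaled $h_{\eff}$-quantized estimates, together with checking that the relevant phase-space integrals $\int\gamma^{(\dots)}$ converge for \emph{all} $\kappa>0$ (not merely $\kappa$ above some threshold), is the technical heart; once one trusts the one-dimensional input from \cite{Ivr1}, the rest is the now-routine scaling bookkeeping of subsections \ref{sect-2-4}--\ref{sect-2-5}. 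For this reason I would, as the author does, relegate the full details to the reader and to the forthcoming proof of Theorem \ref{thm-2-19}, where the same computation is carried out in the more general pseudo-differential setting.
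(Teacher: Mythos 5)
Your overall plan matches the paper's (itself very terse) argument: combine the phase-space scaling machinery of Propositions~\ref{prop-2-16}--\ref{prop-2-17} with the one-dimensional argument of Theorem~4.4.9 of \cite{Ivr1}, and handle the two-point pair $(x,\xi;y,\eta)$ structure by the arguments later written out in Part~II of the proof of Theorem~\ref{thm-2-19}. You also correctly identify that $(\ref{2-44})_1$ yields one non-degenerate $\xi$-direction near the degenerate stratum, which is the basis for the effective one-dimensional reduction that makes Propositions~\ref{prop-2-13} and~\ref{prop-2-18} ``harder'' than the purely soft scaling statements. This is essentially the route the paper takes.

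Two technical points are off, though. First, the scaling function you write in Step~1, $\gamma=\epsilon\bigl(|a|^2+|\nabla_{x,\xi}a|^2+|\nabla^2_{x,\xi}a|^2+\dots\bigr)^{\dots}+Ch^{1/2}$, is misdirected under $(\ref{2-44})_1$: the $\xi\xi$ block of $\nabla^2_{x,\xi}a$ is bounded away from zero on the degenerate stratum by hypothesis, so such a $\gamma$ would be bounded below and the rescaling would be vacuous. The paper stops at scaling $(\ref{2-46})$ (Proposition~\ref{prop-2-17}); the extension from $r+\kappa>2$ to all $\kappa>0$ at $r=1$ is obtained not by another iteration of the scaling but by the one-dimensional input from \cite{Ivr1}. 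Relatedly, the residual stratum is not $\Lambda=\{a=\nabla a=\dots=\nabla^K a=0\}$ — under $(\ref{2-44})_1$ the $\xi$-degeneracy never goes past second order; what remains uncontrolled after $(\ref{2-46})$ is the degeneracy in the $(x,\xi')$-variables of the quantity $b$ in the normal form $a\sim\pm\xi_1^2-b(x,\xi')$. Second, the deferral to ``the forthcoming proof of Theorem~\ref{thm-2-19}'' has to be restricted to Part~II of that proof (the pair-structure analysis, which uses no non-degeneracy): Part~I explicitly takes Proposition~\ref{prop-2-18} as the base case of its induction since $(\ref{2-44})_1$ is equivalent to $(\ref{2-47})_2$, so citing Part~I would be circular. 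These are technical slips rather than structural errors; the paper itself gives only a one-line proof here, and your sketch captures its intent.
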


\subsubsection{}\label{sect-2-5-2} Now  we   can prove our main result for scalar operators:

\begin{theorem}\label{thm-2-19} Consider scalar operator.  Let  conditions
$(\ref{0-9})$ and 
\begin{phantomequation}
\label{2-47}
\end{phantomequation}
\begin{equation}
\sum_{0\le k\le n}|\nabla _\xi^ka|\ge \epsilon_0
\tag*{$(\ref*{2-47})_n$}
\label{2-47-n}
\end{equation}
with some $n$ be fulfilled. Let $\omega$ satisfy $(\ref{1-7})$ and $\kappa>0$. Then the standard Weyl asymptotics $(\ref{2-27})-(\ref{2-29}),(\ref{2-8})$ holds with the remainder estimate  $O(h^{1-d-\kappa})$.
\end{theorem}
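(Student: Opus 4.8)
The plan is to reduce Theorem \ref{thm-2-19} to the already-established Propositions \ref{prop-2-16}--\ref{prop-2-18} by a reduction-of-order argument in the spirit of Theorem 4.4.9 of \cite{Ivr1}. The key observation is that condition $(\ref{2-47})_n$ with $n\ge 1$ is strictly weaker than any of the finite-rank nondegeneracy hypotheses used so far; it allows $a$, $\nabla_\xi a$, and even $\Hess_{\xi\xi}a$ to vanish simultaneously, as long as \emph{some} higher $\xi$-derivative of $a$ is bounded away from zero on $\Sigma=\{a=0\}$. So the first step is to set up a partition of the cotangent variable near $\Sigma$ according to the order of vanishing of $\nabla_\xi a$: fix the least $k$ with $|\nabla_\xi^{k+1}a|\ge\epsilon_0$ at a given point and introduce scaling functions adapted to that order, essentially $\gamma(x,\xi)\asymp |\nabla_\xi a|^{1/k}+(\text{floor depending on }h)$, $\rho=\gamma$, exactly as in the inductive scheme behind Theorem 4.4.9. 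On each such scaled box $B(x,\gamma)\times B(\xi,\rho)$ the rescaled operator satisfies a nondegeneracy condition of the type $(\ref{2-44})_r$ or microhyperbolicity, so Propositions \ref{prop-2-16}--\ref{prop-2-18} apply with effective parameters and yield the contribution of that box to $I$ with the standard Weyl main term plus an error of the form $C(h/\rho\gamma)^{1-d-\kappa}\gamma^{-\kappa}\times(\text{volume factor})$.

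Second, I would carry out the summation over the partition. This is the same bookkeeping that produced (\ref{2-31})--(\ref{2-37}): each scaled box contributes to the remainder a power of $\gamma$, and one must check that the resulting integral $\int \gamma^{\text{(something)}}\,dx\,d\xi$ converges. Because $(\ref{2-47})_n$ guarantees that $\nabla_\xi a$ vanishes to finite order along $\Sigma$, the measure of the set where $|\nabla_\xi a|\le t$ is $O(t^{1/n})$, which is exactly the integrability one needs; the exponents work out (as in the $\kappa>0$ versions of Propositions \ref{prop-2-13} and \ref{prop-2-18}) to give $O(h^{1-d-\kappa})$, using $\kappa>0$ to absorb the marginal logarithmic cases. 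One also needs the complementary zone $\{|x-y|\ge\epsilon\gamma\}$ handled by Proposition \ref{prop-1-5} after rescaling $B(x,\gamma(x,y))$ to unit scale, exactly as in subsection \ref{sect-2-4}; there microhyperbolicity is not needed and the decay in $|x-y|$ furnished by Proposition \ref{prop-1-9} (the $O((|x^1-x^2|+\cdots)^K)$ gain) controls the off-diagonal tail. Finally, one verifies that on each box the main term coincides (modulo lower order in $\gamma$, which sums up to a lower power of $h$) with the standard Weyl expression (\ref{2-27})--(\ref{2-29}) for the constant-coefficient operator obtained by freezing $a$ at the center, so the main terms assemble into the claimed formula — this is the scalar-case calculation already done in Proposition \ref{prop-2-6}.

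The hard part will be making the scaling scheme uniform across the different vanishing orders $k=0,1,\dots,n-1$ simultaneously and checking that the floor ($Ch^{1/(k+1)}$-type terms added to $\gamma$, so that $h_\eff\le 1$) does not spoil the summation: when $\rho\gamma\asymp h$ one has $h_\eff\asymp 1$ and no nondegeneracy is needed, but one must confirm the error contributed by that innermost region is still $O(h^{1-d-\kappa})$ after integrating over the (possibly large in measure, but small in $\gamma$) degenerate set. I expect this is exactly where the structure of Theorem 4.4.9 of \cite{Ivr1} is invoked: the induction on the order of degeneracy, carried out there for the standard remainder $O(h^{1-d})$, transfers verbatim with the bookkeeping powers shifted by $-\kappa$ and with the extra off-diagonal variable handled by the estimates of section \ref{sect-1}. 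I would therefore present the proof as: (1) reduce to Proposition \ref{prop-2-17} on scaled boxes via the degeneracy partition; (2) sum over boxes using the finite-order vanishing of $\nabla_\xi a$ and $\kappa>0$; (3) treat the off-diagonal zone by Propositions \ref{prop-1-5} and \ref{prop-1-9}; (4) identify the main term via Proposition \ref{prop-2-6}, leaving the routine inductive details to the reader as in footnote \ref{foot-5}.
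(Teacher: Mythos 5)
Your high-level plan — partition phase space by the order of vanishing of $\nabla_\xi a$, rescale so that on each box a previously-proved result applies, sum over boxes, and handle the off-diagonal zone separately — is the right skeleton, and it does match the paper's strategy (double scaling, reduction to earlier propositions, induction). But there is a genuine gap in the way you set up the rescaling, and it hides the actual hard part of the proof.

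You propose $\gamma\asymp|\nabla_\xi a|^{1/k}+(\text{floor})$ with $\rho=\gamma$, and then claim that on each scaled box the rescaled operator ``satisfies a nondegeneracy condition of the type $(\ref{2-44})_r$ or microhyperbolicity,'' so that Propositions~\ref{prop-2-16}--\ref{prop-2-18} apply. This is where the argument breaks. The correct scaling adapted to $(\ref{2-47})_n$ is necessarily \emph{anisotropic}: the paper's $(\ref{2-48})_n$ takes $\rho=\gamma^{1/(n+1)}$, not $\rho=\gamma$, because $\nabla_\xi a$ has to be brought up to unit size via the $\xi$-direction while $x$ is held on the $\gamma$-scale. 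With such an anisotropic dilation the microhyperbolicity condition $(\ref{1-15})$ (or $(\ref{1-4})$) does \emph{not} survive the rescaling — the $\xi$-derivative picks up a different power of $\gamma$ than the $x$-derivative — so one cannot simply cite Propositions~\ref{prop-2-16}--\ref{prop-2-18} on the small box. What the paper does instead is exactly to confront this loss: it introduces a one-parameter family $(\ref{2-51})_{n+1,m}$ of weakened microhyperbolicity-type conditions, designs the mixed-homogeneity scaling $(\ref{2-52})_{n,m}$ (with $\rho=\gamma^{(m+1)/(n+1)}$) to be compatible with that family, and then runs a \emph{nested} induction — outer in $n$, inner in $m$ over a discrete set $\{m_\nu\}$ — until $m$ is so large that the integrability condition $(\ref{2-57})$ holds outright and the auxiliary condition can be dropped. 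This double induction, together with the algebraic normal form $(\ref{2-54})$ under $(\ref{2-47})_{n+1}$, \emph{is} the proof; you defer it all to ``routine inductive details,'' but it is not a transcription of a previously-proved fact.

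A smaller discrepancy: in your step (3) the off-diagonal zone is handled by invoking Propositions~\ref{prop-1-5} and \ref{prop-1-9}. The paper's Part II instead works directly with pair localizations $Q_x,Q_y$ and trace-norm / sup-norm bounds $(\ref{2-59})$--$(\ref{2-62})$, then optimizes in a secondary parameter $\gamma$ to get $(\ref{2-63})$--$(\ref{2-68})$, feeding back into the same $M$-integrals as Part I; Proposition~\ref{prop-1-9} plays no role here, and Proposition~\ref{prop-1-5} alone (a smooth-$\omega$ estimate) would not give the needed $\gamma^{\sigma\kappa-\sigma-1}$ summability. So both halves of your argument need repair: Part I needs the weakened-microhyperbolicity nested induction, and Part II needs the explicit localized trace-norm estimates rather than the crude smooth-$\omega$ bound.
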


\begin{proof} [Proof Part I] In this part of the proof we consider at each step only zone (\ref{2-45}) where $\gamma$ will be defined in different ways later.  Treatment of the complementary zone will be described in Part II.

So, we proved the statement of the theorem under condition $(\ref{2-44})_1$ which is equivalent to $(\ref{2-47})_2$.

Let us apply induction with respect to $n$. \emph{Assume that under condition \ref{2-47-n} required estimate is proven\/}.

 In the general case (without condition \ref{2-47-n}) we can introduce scaling functions in the manner similar to (\ref{2-41}):
\begin{phantomequation}
\label{2-48}
\end{phantomequation}
\begin{equation}
\gamma(x,\xi)=\epsilon \bigl(\sum_{0\le k\le n}
|\nabla_\xi^ka|^{N/(n-k+1)} \bigr)^{(n+1)/N} + Ch^{(n+1)/(n+2) },\qquad
\rho (x,\xi)=\gamma ^{1/(n+1)}(x,\xi)
\tag*{$(\ref*{2-48})_n$}
\label{2-48-n}
\end{equation}
with   $N=(n+1)!$.  

Therefore under  assumption of induction we get again remainder estimate $Ch^{1-d+\kappa}M$ with $M$ given by (\ref{2-42}) where this time   the right-hand expression becomes
\begin{phantomequation}
\label{2-49}
\end{phantomequation}
\begin{equation}
M=\int \gamma ^{(\kappa -n-2)/(n+1)}\,dxd\xi;
\tag*{$(\ref*{2-49})_n$}
\label{2-49-n}
\end{equation}
under condition (\ref{1-4}) this expression becomes 
\begin{phantomequation}
\label{2-50}
\end{phantomequation}
\begin{equation}
M\asymp \int_\Sigma \gamma^{(\kappa-1)/(n+1)}\,d\mu\asymp \int_\Sigma \bigl(\sum_{1\le k\le n}
|\nabla_\xi^ka|^{1/(n-k+1)} \bigr)^{\kappa-1}\,d\mu 
\label{2-50-n}
\tag*{$(\ref*{2-50})_n$}
\end{equation}
which is $O(1)$ under assumption $|\nabla^{n+1}_\xi a|\ge \epsilon_0$ (as lower order derivatives with respect to $\xi$ are close to 0).  This is exactly condition $(\ref{2-47})_{n+1}$. 

\emph{So, now we have a proper estimate under condition 
$(\ref{2-47})_{n+1}$ instead of $(\ref{2-47})_n$ but now we also need condition $(\ref{1-4})$\/}.

Without condition (\ref{1-4}) we would need something different; f.e. ignoring integration with respect to $x$ one should assume that 
$\rank (\nabla_\xi ^{n+1}a) +\kappa > n+2$ where the rank of multilinear symmetric $m$-form $G$ is $d-\dim\Ker G$; $\Ker G=\{x: G(x, x^2,\dots, x^m)=0 \  \forall x^2,\dots,x^m\}$. This is rather unusable.

Instead I want to weaken condition (\ref{1-4}), replacing it by  
\begin{phantomequation}
\label{2-51}
\end{phantomequation}
\begin{equation}
\sum_{2\le j\le n+1,\ l:m + j:(n+1)\le 1 } |\nabla_x^l\nabla_\xi^j|\ge \epsilon_0
\tag*{$(\ref*{2-51})_{n+1,m}$}
\label{2-51-nm}
\end{equation}
for some $m>0$ which is not necessarily an integer. Obviously in our assumptions (\ref{1-4}) coincides with $(\ref{2-51})_{n+1,1}$.

Let us run a kind of nested induction. So, \emph{let us assume that under conditions $(\ref{2-47})_{n+1}$ and \ref{2-51-nm} remainder estimate $O(h^{1-d-\kappa})$ is proven.}

Now we can go to something similar (\ref{2-46}):
\begin{phantomequation}
\label{2-52}
\end{phantomequation}
\begin{multline}
\gamma(x,\xi)=\epsilon \bigl(\sum_{k,l: k:n +l:m \le 1}
|\nabla_\xi^k\nabla_x^l a|^{N s_{kl}} \bigr)^{1/N} + {\bar\gamma},\qquad {\bar\gamma}=Ch^{(n+1)/(m+n+2) },\\
\rho (x,\xi)=\gamma ^{(m+1)/(n+1)}(x,\xi),\qquad s_{kl}={\frac {n+1}{(m+1)(n+1)-(m+1)k-(n+1)l}}.
\tag*{$(\ref*{2-52})_{n,m}$}
\label{2-52-nm}
\end{multline}
Then we recover remainder estimate $Ch^{1-d-\kappa}M$ with $M$ defined by (\ref{2-42}) which is now
\begin{phantomequation}
\label{2-53}
\end{phantomequation}
\begin{equation}
M\asymp \int  \gamma^{-1 + (m+1)(\kappa-1)/(n+1)}\,dx d\xi\asymp
\int \rho   ^{-(n+1)/(m+1) +  (\kappa-1) }    \,dx d\xi.
\label{2-53-nm}
\tag*{$(\ref*{2-53})_{nm}$}
\end{equation}
Under condition $(\ref{2-47})_{n+1}$ we can assume without any loss of the generality that
\begin{equation}
a(x,\xi)= \sum_{0\le j \le n+1} b_j (x,\xi')\xi_1^{n+1-j},\qquad b_0=1,\quad b_1=0;
\label{2-54}
\end{equation}
we can always reach it by change of coordinates and multiplication of $A$ by an appropriate positive pseudo-differential factor. Then
\begin{equation}
\rho \asymp |\xi_1|+{\tilde\rho}, \qquad {\tilde\rho}={\tilde\gamma} (x,\xi')^{(m+1)/(n+1)},\qquad
{\tilde\gamma }=  \sum_{j, k,l: (k+j ):n +(l:m)\le 1}
|\nabla_{\xi'}^k\nabla_x^l b_j|^{ s_{(k+j)l}}  + {\bar\gamma }.
\label{2-55}
\end{equation}
Then 
\begin{phantomequation}
\label{2-56}
\end{phantomequation}
\begin{equation}
M\asymp
\int {\tilde\rho}   ^{-(n+1)/(m+1) +  \kappa }    \,dx d\xi'\asymp \int  {\tilde\gamma}^{-1 + (m+1)\kappa /(n+1)}\,dx d\xi'
\label{2-56-nm}
\tag*{$(\ref*{2-56})_{nm}$}
\end{equation}
(with an extra logarithmic factor as the power is 0). Then $M=O(1)$ as \begin{equation}
(m+1)\kappa /(n+1)>1.
\label{2-57}
\end{equation}

Moreover, $M=O(1)$ provided
there exists $(j,k,l)$ with 
$|\nabla_{\xi'}^k\nabla_x^l b_j|\ge \epsilon_0$ and \emph{either\/} 
$k \ge 1$, $(k+j -1):n+l:m \le 1$, $s_{k+j-1,l}<1$ 
\emph{or} $l\ge 1$, $(k+j):n+(l-1) :m\le 1$, $s_{k+j,l-1}<1$.

Therefore one can derive easily
\begin{claim}\label{2-58}
If remainder estimate $O(h^{1-d-\kappa})$ holds under condition 
$(\ref{2-51})_{n+1,m'}$ for every $m'<m$, then it also holds under condition
\ref{2-51-nm}.
\end{claim}
On the other hand there exists a discrete set  $\{m_\nu\}_{\nu=1,2,\dots}$  with $m_1<m_2<\dots $ such that if condition \ref{2-51-nm} is fulfilled for $m=m_\nu$ then it is fulfilled for all $m\in (m_\nu,m_{\nu+1})$ as well. 

This justifies induction with respect to $m$ running this set and therefore remainder estimate $O(h^{1-d-\kappa})$ holds under condition \ref{2-51-nm} no matter how large $m$ is. However if $m$ is large enough, condition (\ref{2-57}) is fulfilled and we do not need condition \ref{2-51-nm} anymore.

This concludes induction with respect to $n$. 
\end{proof}

\begin{proof} [Proof Part II] However in contrast to standard asymptotics we need to consider not points $(x,\xi)$ but pairs $(x,\xi;y,\eta)$ and the pure standard arguments work in zone (\ref{2-45}).

It follows from the standard theory that if $Qx$ and $Q_y$ have symbols supported in $\epsilon (\rho_x,\gamma_x)$- and $\epsilon (\rho_y,\gamma_y)$- vicinities of $(x,\xi)$ and $(y,\eta)$ respectively then
\begin{equation}
\|Q_x E Q_y\|_1 \le Ch^{-d}(\rho_x\gamma_x)^{d/2}(\rho_y\gamma_y)^{d/2}
\label{2-59}
\end{equation}
and moreover, if either $|x-y|\ge \epsilon_0 \gamma_x$ or $|\xi-\eta|\ge \epsilon_0 \rho_x$ then 
\begin{equation}
\|Q_x E Q_y\|_1 \le Ch^{1-d}(\rho_x\gamma_x)^{d/2-1}(\rho_y\gamma_y)^{d/2}.
\label{2-60}
\end{equation}
Surely the same will be true with $(x,\xi)$ and $(y,\eta)$ permuted. 

Then contribution of such pair to the error estimate does not exceed 
\begin{equation}
Ch^{1-d}(\rho_x\gamma_x)^{d/2-1}(\rho_y\gamma_y)^{d/2}|x-y|^{-\kappa}
\label{2-61}
\end{equation}
if  $|x-y|\ge \epsilon_0 \gamma_x$. 

Otherwise contribution of the pair $\psi_x Q_x$ and $Q_y$ to the error estimate does not exceed
$Ch^{1-d}(\rho_x\gamma_x)^{d/2-1}(\rho_y\gamma_y)^{d/2}\gamma^{-\kappa}$
where $\psi_1$, $(1-\psi_x)$ are supported in $\{|x-y|\ge \gamma\}$ and
$\{|x-y|\le 2\gamma\}$ and $\gamma \ge h \rho_x^{-1}$. 
 
 Furthermore, since 
 \begin{equation}
 |Q_xEQ_y|\le Ch^{1-d}\rho_x^{d/2-1} \gamma_x^{-1} \rho_y  ^{d/2} 
 \label{2-62}
 \end{equation}
 due to the standard arguments, contribution of the pair $(I-\psi_x)Q_x$ and $Q_y$ to the error does not exceed 
$Ch^{2-2d}\rho_x^{d-2}  \rho_y  ^{d} \gamma_x^{-2}\gamma_y^d\gamma^{d-\kappa}$
 Plugging $\gamma=h\rho_x^{-1}$ we estimate the contribution of the pair
 $Q_x$, $Q_y$ by
 \begin{equation}
 Ch^{1-d-\kappa}(\rho_x\gamma_x)^{d/2-1}(\rho_y\gamma_y)^{d/2} \rho_x^\kappa + 
 Ch^{2-d-\kappa }\rho_x^{-2+\kappa}  \gamma_x^{-2}\rho_y  ^{d} \gamma_y^d
 \label{2-63}
 \end{equation}
 which is larger than (\ref{2-61}). 
 
In these estimates we do not need non-degeneracy condition and therefore as $(y,\eta)$ and $(x,\xi)$ are given we can take 
\begin{equation}
\rho_x = \rho_y=  |x-y|^\sigma +|\xi-\eta| , 
\gamma_x = \gamma_y   |x-y|+|\xi-\eta|^{1/\sigma },
\label{2-64}
\end{equation}
where $\rho=\gamma^\sigma$ on the corresponding step of our analysis. 
Then as $(z,\zeta)$ are fixed contribution of 
$\{|x-z|\le \gamma, |y-z|\le \gamma, |\xi -\zeta |\le \rho, |\eta-\zeta|\le \rho, |x-y|+|\xi-\eta|^{1/\sigma}\ge \epsilon \gamma\}$
to the error does not exceed this expression 
\begin{equation}
 Ch^{1-d-\kappa} (\rho \gamma)^{d-1} \rho^\kappa  + 
 Ch^{2-d-\kappa }(\rho \gamma)^{d-2} \rho^\kappa
 \label{2-65}
\end{equation}
where the second term is less than the first one. 

Then the total contribution of the zone in question to the error does not exceed 
\begin{equation}
Ch^{1-d}\iiint  \gamma^{\sigma\kappa-\sigma -1}\,dy d\eta \, \gamma^{-1}d\gamma
\label{2-66}
\end{equation}
where equation is taken over $\{\gamma \ge \gamma_x\}$ and the integral in question is   equivalent  to $Mh^{1-d}$ where $M=1$  as $\sigma (\kappa -1)> 1$, 
\begin{equation}
M= \iiint  |\log \gamma (y,\eta) |\, dy d\eta 
\label{2-67}
\end{equation}
as $\sigma (\kappa-1)=1$ and due to \ref{2-47-n} $M\asymp 1$ as well,
\begin{equation}
M=\iiint  \gamma (y,\eta) ^{\sigma\kappa-\sigma-1}\,dy d\eta 
\label{2-68}
\end{equation}
as $\sigma (\kappa -1)<1$,  
 and on each step of the induction we already proved that $M\asymp 1$.
\end{proof}

\subsection{General Microhyperbolic Case. II}
\label{sect-2-6}

 Let us consider matrix operator. Let $\lambda_j(x,\xi)$ be eigenvalues of its principal part. Then 
$|\nabla _{x,\xi}\lambda_j|\le c$ and microhyperbolicity with respect to $\ell$  means that 
\begin{equation}
|\lambda_j(x,\xi)|\le \epsilon_0\implies (\ell \lambda_j) (x,\xi)\ge \epsilon_0\qquad \forall j.
\label{2-69}
\end{equation}
Let us consider zone
\begin{equation}
\cU_j =\bigl\{(x,\xi): |\lambda _j|\lesssim  \min_{k\ne j} |\lambda_k|\bigr\}
\label{2-70}
\end{equation}
and let us define here 
\begin{equation}
\gamma  \Def \min_{k\ne j} |\lambda_k|+{\frac 1 2}{\bar\gamma}
\label{2-71}
\end{equation}
and $\rho=\gamma$.
Consider zone
\begin{equation}
\bigl\{ \gamma \ge |x-y| +|\xi -\eta| + {\bar\gamma}\bigr\}
\label{2-72}
\end{equation}
and let us rescale  $x\mapsto x/\gamma$, $\xi\mapsto \xi/\gamma$, $\lambda_k\mapsto \lambda_k/\gamma$, $h\mapsto h/\gamma^2$ preserving microhyperbolicity condition (\ref{1-15}) and simultaneously making operator with $|\lambda_k|\ge 1$ for $k\ne j$ and therefore analysis of this operator is not different from the scalar one. Unfortunately we cannot use non-degeneracy conditions of subsections \ref{sect-2-4}--\ref{sect-2-5} which would not  survive this, but microhyperbolicity condition survives and \emph{we assume that $(\ref{1-15})$ is fulfilled\/}. 

Then as the main part of the asymptotics is given by the standard Weyl expression $(\ref{2-27})-(\ref{2-29})$, the contribution of zone (\ref{2-72}) (intersected with $\{\gamma \ge C_0{\bar\gamma}\}$) to the remainder 
 does not exceed

\begin{equation}
R_j=\int_{\Sigma_j \cap \{ \gamma \ge C{\bar\gamma} \}   } C\bigl(h\gamma^{-2}\bigr)^{1-d-\kappa}\gamma ^{-\kappa-2d}\,
d\wp_j\asymp 
Ch^{1-d-\kappa}\int_{\Sigma_j \cap \{ \gamma \ge {\bar\gamma} \}   }  \gamma ^{-2+\kappa}\,d\wp_j
\label{2-73}
\end{equation}
with $\Sigma_j=\{(x,\xi): \lambda_j=0\}$ and $d\wp_i=dxd\xi :d\lambda_j$ density on it. 

Let us fix ${\bar\gamma}=Ch^{1/2}$. Then in the complementary zone 
$\cup _{k\ne j}\{|\lambda_j|+|\lambda_k|\le C{\bar\gamma}\}$ 
one needs just to make a rescaling $x\mapsto x/{\bar\gamma}$, $\xi\mapsto \xi/{\bar\gamma}$ which sends $h$ to 1 and no microhyperbolicity condition would be needed and the contribution of this zone would not exceed 
\begin{equation}
R'_{jk}=Ch^{-d-\kappa/2}  \mes  \bigl \{|\lambda_j|+|\lambda_k|\le Ch^{1/2}\bigr\}.
\label{2-74}
\end{equation}

So, the total contribution of zone $\cup_j \cU_j$  to the remainder is given by $\sum_j R_j + \sum_{j,k:j\ne k}R'_{jk}$.

Assuming that
\begin{equation}
\wp_j \bigl(\Sigma_j: |\lambda_k|\le t\bigr)+ t^{-1}\mes \{|\lambda_j|+|\lambda_k|\le t\} =O(t^r).
\label{2-75}
\end{equation}
we get under additional assumption $r+\kappa > 2$ (which is always fulfilled as $r\ge 2$) 
that  $R_j=O(h^{1-d-\kappa})$ while $R'_{jk}=O(h^q)$ with
\begin{equation}
q=-d -{\frac 1 2}\kappa +{\frac r 2}.
\label{2-76}
\end{equation}
which is  $O(h^{1-d-\kappa})$ as well.

On the other hand, as $r+\kappa < 2$
we get  that $R_j= O(h^q)$, $R'_{jk}=O(h^q)$ with  $q$ given by (\ref{2-77}).

Finally, as $r+\kappa =2$ we get $R_j=O(h^{1-d-\kappa}|\log h|)$     and  $R_{jk}=O(h^{1-d-\kappa})$.

Assume temporarily that no more than two eigenvalues can be close to 0 simultaneously.  Then we are already done since in the zone complimentary to (\ref{2-72}) we redefine $\gamma = \epsilon (|x-y|+|\xi -\eta|)$ and apply the same rescaling as before and one does not need microhyperbolicity condition.

Let us apply induction by $m$ assuming that no more than $m$ eigenvalues can be close to 0 simultaneously. Then we can define on each step
\begin{equation}
\gamma (x,\xi) = \epsilon \max _{J: \#J=m} \min_{k\not\in J} |\lambda_k(x,\xi)| + {\bar\gamma}
\label{2-77}
\end{equation}
and repeat all above arguments. We will arrive to 

\begin{theorem}\label{thm-2-20}
Let conditions $(\ref{0-9})$, $(\ref{0-7})$, $(\ref{1-15})$ and $(\ref{2-75})$ be fulfilled.Then the standard Weyl asymptotics $(\ref{2-27})-(\ref{2-29})$ holds   with the remainder estimate

\medskip
\noindent
{\rm (i)} which is  $O(h^{1-d-(m-1)\kappa})$ as $r+\kappa>2$;

\medskip
\noindent
{\rm (ii)} which is $O(h^q)$ with $q$ defined by $(\ref{2-76})$ as $r+q\kappa <2$  and $O(h^{1-d-\kappa}|\log h|)$ as $r+\kappa =2$.
\end{theorem}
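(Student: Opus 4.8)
The plan is to assemble the proof from the pieces already developed in Sections \ref{sect-1} and \ref{sect-2}, organized around a decomposition of phase space into zones governed by the scaling function $\gamma$ of $(\ref{2-77})$. First I would fix $m$ and run the induction announced before the statement: assuming the theorem is known when no more than $m-1$ eigenvalues can be simultaneously small, I reduce to the case where exactly $m$ eigenvalues may collide. The base case $m=2$ is handled by the remark preceding the theorem (in the zone complementary to $(\ref{2-72})$ one sets $\gamma=\epsilon(|x-y|+|\xi-\eta|)$ and rescales, needing no microhyperbolicity). So the inductive step is the heart of the matter.

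For the inductive step, I would split $\cup_j \cU_j$ with $\cU_j$ as in $(\ref{2-70})$, and within each $\cU_j$ separate the ``good'' zone $(\ref{2-72})$, where $\gamma \ge |x-y|+|\xi-\eta|+{\bar\gamma}$, from its complement. In the good zone, rescale $x\mapsto x/\gamma$, $\xi\mapsto\xi/\gamma$, $h\mapsto h/\gamma^2$; this makes all $\lambda_k$ with $k\ne j$ of size $\gtrsim 1$, so the rescaled operator behaves like a scalar one and microhyperbolicity $(\ref{1-15})$ survives. Applying the scalar results (Theorem \ref{thm-2-19} / Proposition \ref{prop-2-2}) after rescaling, the contribution to the remainder is the integral $R_j$ of $(\ref{2-73})$; the measure hypothesis $(\ref{2-75})$ with $r+\kappa>2$ gives $R_j=O(h^{1-d-\kappa})$, and more generally the exponent $q$ of $(\ref{2-76})$. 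The degenerate core $\cup_{k\ne j}\{|\lambda_j|+|\lambda_k|\le C{\bar\gamma}\}$ with ${\bar\gamma}=Ch^{1/2}$ is rescaled by ${\bar\gamma}$, sending $h$ to $1$, and its contribution $R'_{jk}$ of $(\ref{2-74})$ is bounded via $(\ref{2-75})$ by $O(h^q)$ as well, which is $O(h^{1-d-\kappa})$ when $r+\kappa>2$ and gives the stated logarithmic and sub-sharp cases otherwise. Summing $\sum_j R_j + \sum_{j\ne k} R'_{jk}$ and noting that the factor $(m-1)\kappa$ enters because $\omega$ is homogeneous of degree $-(m-1)\kappa$ (via $(\ref{0-7})$ and the estimates of Section \ref{sect-1-3}, in particular the computation in the proof of Theorem \ref{thm-1-6}) yields the claimed main term $(\ref{2-27})$–$(\ref{2-29})$ with remainder $O(h^{1-d-(m-1)\kappa})$.

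Finally, for the zone complementary to $(\ref{2-72})$ inside $\cU_j$, I would redefine $\gamma=\epsilon(|x-y|+|\xi-\eta|)$ and rescale $B(x,\gamma)$ to $B(0,1)$ as in Proposition \ref{prop-1-5}; here no microhyperbolicity is needed, exactly as in the $m=2$ base case, and the contribution is absorbed into the same bounds. This closes the induction on $m$.

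The main obstacle I anticipate is making the rescaling in the good zone genuinely uniform: one must check that after $x\mapsto x/\gamma$, $\xi\mapsto\xi/\gamma$, $h\mapsto h/\gamma^2$ the effective semiclassical parameter stays $\lesssim 1$ (so one stays in the regime where the scalar asymptotics of Theorem \ref{thm-2-19} apply with constants independent of the cell), that the microhyperbolicity constant in $(\ref{1-15})$ does not degrade across cells, and that the singular weight $\omega$ rescales with the correct homogeneity so that the $\gamma$-powers in $R_j$ and $R'_{jk}$ are as stated — in particular the bookkeeping that forces the exponent $-(m-1)\kappa$ rather than $-\kappa$. The measure-theoretic input $(\ref{2-75})$ must also be verified to be preserved (or assumed outright, as in the hypothesis) under the restriction to the zones $\cU_j$ and their degenerate cores; this is where the hypothesis $r+\kappa>2$ versus $r+\kappa\le 2$ produces the dichotomy in (i) versus (ii).
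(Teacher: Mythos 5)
Your reconstruction follows the paper's argument closely in all essentials: decompose phase space into the zones $\cU_j$ of (2.70); in the good zone (2.72), rescale by $\gamma$ of (2.71) so that the other eigenvalues become $\gtrsim 1$, reducing to the scalar analysis with microhyperbolicity preserved and yielding the contribution $R_j$ of (2.73); handle the degenerate core $\cup_{k\ne j}\{|\lambda_j|+|\lambda_k|\le C\bar\gamma\}$ with $\bar\gamma=Ch^{1/2}$ by rescaling so that the effective $h$ becomes $\asymp 1$, yielding $R'_{jk}$ of (2.74); estimate both via (2.75); and run an induction on the maximal number of simultaneously vanishing eigenvalues using the scaling function (2.77) to pass to the general case. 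The dichotomy between parts (i) and (ii) according to $r+\kappa>2$, $=2$, $<2$ is traced correctly.

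One caution on notation, since it makes your exposition potentially misleading: the letter $m$ is doing double duty. In the remainder exponent $(m-1)\kappa$ it refers to the number of factors in the original $I_m$ of (0.6), and since Section~2 has fixed $m=2$ from (2.3) on, every displayed bound (2.73)--(2.76) literally carries $\kappa=(m-1)\kappa|_{m=2}$. The $m$ in the induction announced just before the theorem is an unrelated quantity --- the maximal number of eigenvalues of the principal symbol that can vanish simultaneously at a point $(x,\xi)$ --- and it never enters the power of $h$. Your opening sentence (``fix $m$ and run the induction\ldots reduce to the case where exactly $m$ eigenvalues may collide'') together with your later remark that ``the factor $(m-1)\kappa$ enters because $\omega$ is homogeneous of degree $-(m-1)\kappa$'' conflates the two; the proof structure you lay out is still the paper's, but as written it invites the incorrect impression that the exponent and the induction variable are linked. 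Rename one of the two and the argument is exactly the intended one.
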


\begin{remark}\label{rem-2-21} Condition (\ref{2-75}) is fulfilled provided 
$\Lambda _{jk}=\{\lambda_j=\lambda_k=0\}$ are smooth manifolds of codimension
$r$ and $|\lambda_j| \asymp |\lambda_k|\asymp \dist ((x,\xi),\Lambda_{jk})$ in its vicinity; this assumption should be fulfilled  for all $j\ne k$.
\end{remark}

\bibliographystyle{alpha}

\providecommand{\bysame}{\leavevmode\hbox to3em{\hrulefill}\thinspace}

\vglue .1truein
\begin{tabular}{rrl}
&{\hskip 220 pt} &Department of Mathematics,\cr
&&University of Toronto,\cr
&&40 St.George Str.,\cr
&&Toronto, Ontario M5S 2E4\cr
&&Canada\cr
&&ivrii@math.toronto.edu\cr
&&Fax: (416)978-4107\cr
\end{tabular}

\end{document}